\newtheorem{definition}{\bf Definition}[section]
\newtheorem{Lem}[definition]{\bf Lemma}
\newtheorem{claim}[definition]{\bf Claim}
\newtheorem{Thm}[definition]{\bf Theorem}
\newtheorem{Prop}[definition]{\bf Proposition}
\newtheorem{Cor}[definition]{\bf Corollary}
\newtheorem{Rem}[definition]{\bf Remark}
\newtheorem{ex}[definition]{\bf Examples}
\newtheorem{cons}[definition]{\bf Construction}
\newcommand{\ls}{\ell}
\newcommand{\diam}{\textrm{diam}}
\newcommand{\up}{\frac{n+\ls}{4}}
\title{Upper bounds on the $k$-isolation number}
\author[1]{Peter Borg}
\author[2]{Magdalena Lema\'nska}
\author[3]{Merc\`e Mora}
\author[4]{Mar\'ia Jos\'e Souto-Salorio}
\affil[1]{University of Malta, Malta

{\tt peter.borg@um.edu.mt}}
\affil[2]{Gda\'nsk University of Technology, Poland

{\tt magdalena.lemanska@pg.edu.pl}}
\affil[3]{
Universitat Polit\`ecnica de Catalunya, 
	Spain 
 
	{\tt merce.mora@upc.edu}}
\affil[4]{
Universidade da Coru\~na, Spain
		
	{\tt maria.souto.salorio@udc.es}}
\begin{document}

\date{}
\maketitle

\begin{abstract}
The isolation number of a graph $G$ (also called the vertex-edge domination number of $G$), denoted by $\iota(G)$, is the size of a smallest subset $D$ of the vertex set $V(G)$ of $G$ such that $G-N[D]$ (the graph obtained by deleting the closed neighbourhood $N[D]$ of $D$ from $G$) has no edges. For $k \geq 1$, the $k$-isolation number of $G$ is the size of a smallest subset $D$ of $V(G)$ such that the maximum degree of $G-N[D]$ is at most $k-1$. Thus, $\iota_1(G) = \iota(G)$. Let $n$ and $\ell$ be the number of vertices and the number of leaves of $G$, respectively. We show that if $n \geq 3$ and $G$ is connected, then $\iota_k(G) \leq \frac{n - \ell}{2}$. We also show that if $G$ is a tree $T$, then $\iota(T) \leq \frac{n + \ell}{4}$ and $\iota_k(T) \leq \frac{n + \ell}{2k+1}$ for $k \geq 2$. These bounds together improve the inequality $\iota_k(T) \leq \frac{n}{k+2}$ of Caro and Hansberg except that their inequality is better if $k \geq 2$ and $\frac{k-1}{k+2}n < \ell < \frac{k}{k+2}n$. Each of the new bounds is attainable if it is an integer. For each of them, we characterize all the graphs that attain it.
\end{abstract}
 
\section{Introduction} \label{Intro section}

Domination of graphs attracted much attention since it was introduced in the fifties. This was motivated by chessboard problems, among others (see, for example, the book \cite{haynes} and references therein). A set $D$ of vertices of a graph $G$ is {\it dominating} if every vertex not in $D$ has at least one neighbour in $D$.  
 For every graph $G=(V,E)$ and $D\subseteq V$, let $N[D]$ denote the set of vertices of $D$ and all their neighbours.
With this terminology, $D$ is a dominating set of $G$  if and only if  $V=N[D]$.
The concept of isolation arises by relaxing this condition and was introduced by Caro and Hansberg in \cite{adriana}.
 Concretely, let $\mathcal{F}$ be a set of graphs. 
 We say that $D$ is {\it $\mathcal{F}$-isolating} if no subgraph of $G-N[D]$ is a copy of a graph in $\mathcal{F}$. The \emph{$\mathcal{F}$-isolation number} of $G$, denoted by  $\iota(G, \mathcal{F})$, is the size of a smallest $\mathcal{F}$-isolating set of $G$. If $\mathcal{F} = \{F\}$, then we may replace $\mathcal{F}$ by $F$ in the defined terms and notation. If $\mathcal{F} = \{K_2\}$, then the terms $\mathcal{F}$-isolating set and $\mathcal{F}$-isolation number, and the notation $\iota(G,\mathcal{F})$, are abbreviated to isolating set, isolation number, and $\iota(G)$, respectively.  An isolating set of $G$ of size $\iota(G)$ will be called a \emph{minimum isolating set} of $G$.  

The size of a smallest dominating set of $G$ is denoted by $\gamma(G)$. Note that a subset $D$ of $V(G)$ is dominating if and only if it is $K_1$-isolating, that is, $G-N[D]$ has no vertices. Thus, $\gamma(G) = \iota(G,K_1)$. The following classical result of Ore is one of the earliest domination results.  

\begin{Thm}\label{thm:Ore} \cite{Ore} If $G$ is a connected $n$-vertex graph that is not a copy of $K_1$, then $\gamma (G)\le n/2$. Moreover, the bound is sharp.
\end{Thm}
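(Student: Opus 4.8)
The plan is to derive the bound from a self-complementarity property of minimal dominating sets. First I would note that since $G$ is connected with $n \geq 2$ vertices, it has no isolated vertices, i.e.\ every vertex has at least one neighbor. This is the only structural hypothesis the argument will use.

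The key step is the following lemma: if a graph $H$ has no isolated vertices and $D$ is a \emph{minimal} dominating set of $H$ (no proper subset of $D$ dominates $H$), then $V(H) \setminus D$ is also a dominating set. Since a vertex outside $D$ dominates $H$ together with $V(H)\setminus D$ automatically, proving this amounts to showing that every $v \in D$ has a neighbor outside $D$. I would fix $v \in D$ and use minimality: $D \setminus \{v\}$ fails to dominate, so some vertex $w$ lies outside $N[D \setminus \{v\}]$. If $w = v$, then $v$ has no neighbor in $D$, and since $v$ is not isolated its neighbor must lie in $V(H) \setminus D$. If $w \neq v$, then $w \notin D$, yet $w$ is dominated by $D$, so its only neighbor in $D$ is $v$; hence $v$ is adjacent to $w \in V(H) \setminus D$. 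In either case $v$ has a neighbor outside $D$, as required.

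With the lemma in hand the bound is immediate: take a minimum dominating set $D$ of $G$, which is necessarily minimal, so that both $D$ and $V \setminus D$ dominate $G$. The smaller of these two sets has at most $n/2$ vertices, giving $\gamma(G) \leq n/2$. For sharpness I would exhibit equality cases, for instance the $4$-cycle $C_4$, where $\gamma(C_4) = 2 = 4/2$, or more generally any corona $H \circ K_1$ obtained by attaching one pendant leaf to each vertex of a connected graph $H$: each leaf forces its support vertex to be dominated separately, so $\gamma(H \circ K_1) = |V(H)| = n/2$.

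I expect the case analysis inside the lemma to be the main obstacle, specifically arguing cleanly that the witness $w$ of non-domination yields a neighbor of $v$ outside $D$ in both configurations; once the lemma is established, the halving bound and the sharpness examples are routine.
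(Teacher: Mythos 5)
Your proof is correct. The paper does not prove this statement at all --- it is Ore's classical theorem, cited from the literature --- so there is no in-paper argument to compare against; your route (a minimum dominating set is minimal, and in a graph with no isolated vertices the complement of a minimal dominating set is again dominating, whence the smaller of the two parts has size at most $n/2$) is the standard textbook proof, your case analysis on the witness $w$ of non-domination is sound, and your sharpness examples ($C_4$ and the coronas $H\circ K_1$) are exactly the extremal graphs the paper later attributes to Payan--Xuong and Fink--Jacobson--Kinch--Roberts.
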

\noindent
The graphs attaining the bound were determined independently by Payan and Xuong \cite{PX} and by Fink, Jacobson, Kinch and Roberts \cite{FJKR} (see also \cite{haynes}).

Note that a subset $D$ of $V(G)$ is isolating if and only if $V-N[D]$ is independent, that is, $G-N[D]$ has no edges. Isolation was already introduced as \emph{vertex-edge domination} in \cite{peters}. The following upper bound on the isolation number has been proved independently by Caro and Hansberg \cite{adriana} and by \.Zyli\'nski \cite{pz}.

\begin{Thm}\label{thm:caro} \cite{adriana,pz} If $G$ is a connected $n$-vertex graph that is not a copy of $K_2$ or $C_5$, then $\iota (G)\le n/3$. Moreover, the bound is sharp.
\end{Thm}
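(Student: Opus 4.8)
The plan is to prove the bound by induction on the order $n$, building a small isolating set greedily by repeatedly deleting the closed neighbourhood of a carefully chosen vertex. The governing idea is an amortized count: if I can always find a vertex $v$ with $|N[v]| \ge 3$ such that every component of $G - N[v]$ is connected, strictly smaller than $G$, and is \emph{not} a copy of $K_2$ or $C_5$, then adding $v$ to the isolating set and applying the induction hypothesis to each such component $C$ gives $\iota(G) \le 1 + \sum_{C} \iota(C) \le 1 + \sum_{C} |V(C)|/3 = 1 + (n - |N[v]|)/3 \le 1 + (n-3)/3 = n/3$. Single-vertex components contribute $0$ and cause no trouble. Thus the whole problem reduces to showing that such a vertex $v$ exists whenever $G$ is not one of the excluded graphs.

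To guarantee a vertex whose closed neighbourhood has at least three vertices, and to control the pieces left behind, I would peel from the end of a longest path $P = x_0 x_1 \cdots x_\ell$ of $G$. Since $G$ is connected with $n \ge 3$ and $G \ne K_2$, such a path has length at least $2$, so its second vertex $x_1$ has both $x_0$ and $x_2$ as neighbours and hence $|N[x_1]| \ge 3$. Because $P$ is longest, \emph{every} neighbour of the endpoint $x_0$ lies on $P$; consequently $x_0$ disappears entirely when $N[x_1]$ is deleted, so the deletion cannot create a bad component involving the very end of the path. I would therefore take $v = x_1$ (or, after refining the choice, a vertex one step further along $P$), delete $N[v]$, and recurse on $G - N[v]$.

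The hard part — and the structural reason why $K_2$ and $C_5$ appear as the only exceptions — is ensuring that deleting $N[v]$ leaves no component isomorphic to $K_2$ or $C_5$, since these are exactly the connected graphs whose isolation number exceeds one third of their order, and for them the amortized count fails. I would resolve this by a finite local case analysis of the branches hanging at $x_1$ and $x_2$: any component $C$ of $G - N[v]$ is joined in $G$ only to $N[v]$ (no edges run between distinct components), so whenever the deletion of one closed neighbourhood would split off a $K_2$ or $C_5$, I can either shift the chosen vertex along $P$ or enlarge the deleted chunk so that at least three vertices are removed per added vertex while the offending small component is destroyed. The genuinely global obstruction is when this local surgery is unavailable, namely when $G$ has maximum degree at most $2$, i.e. $G$ is a path or a cycle; these I would dispose of by direct computation, which yields $\iota(G) \le n/3$ for every path and every cycle, the unique exception being $C_5$. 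The smallest cases $n \le 5$ must also be checked by hand, since they seed the induction and pin down the exact list of excluded graphs.

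Finally, for sharpness I would exhibit connected graphs attaining equality for infinitely many orders. The triangle $K_3$ (equivalently $P_3$) already gives $\iota = 1 = n/3$ at $n = 3$. For $n = 3k$ I would take $k$ triangles arranged in a chain, consecutive triangles joined by a single bridging edge: one verifies that no single added vertex can destroy the edges of two triangles simultaneously, which forces $\iota = k = n/3$. Hence the constant $1/3$ cannot be lowered, and the bound is sharp. I expect the exceptional-component control in the inductive step to be the main obstacle, as it is precisely where the exclusions $K_2$ and $C_5$ enter the argument.
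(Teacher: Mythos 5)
First, note that the paper does not prove this statement at all: Theorem~\ref{thm:caro} is quoted from \cite{adriana,pz}, so there is no in-paper proof to compare against; your proposal has to stand on its own.

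As it stands it has a genuine gap at its centre. Your inductive scheme ($\iota(G)\le 1+\sum_C\iota(C)$ over the components $C$ of $G-N[v]$, with $|N[v]|\ge 3$) is sound and is the standard strategy, but the entire content of the theorem lives in the step you defer: guaranteeing that no component of $G-N[v]$ is a copy of $K_2$ or $C_5$. You assert that ``a finite local case analysis'' of the branches at $x_1$ and $x_2$ lets you shift or enlarge the deleted chunk, but you never carry it out, and it is not a routine verification — for instance, in $P_5$ deleting $N[x_1]$ already leaves a $K_2$, a $C_5$ hanging off $x_2$ by a path can survive the deletion of $N[x_1]$ and of $N[x_2]$, and one must check that the repair never itself creates a new bad component or breaks the ``at least three vertices removed per chosen vertex'' accounting. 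Without that analysis written down, the induction does not close. The small seeds ($n\le 5$, paths, cycles) are fine, but they are not where the difficulty is.

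Separately, your sharpness construction is wrong. For a chain of $k\ge 3$ triangles $T_1,\dots,T_k$ with bridges $c_ia_{i+1}$, the set $\{c_1,a_3\}$ is already an isolating set when $k=3$: it dominates all of $T_1$ and $T_3$ together with $a_2$ and $c_2$, leaving only the isolated vertex $b_2$, so $\iota=2<9/3$. The claim that ``no single added vertex can destroy the edges of two triangles simultaneously'' does not imply $\iota=k$, because two chosen vertices can jointly handle a third triangle. A correct family is obtained, e.g., by attaching a pendant path on two vertices to every vertex of a connected graph $H$, giving $n=3|V(H)|$ and $\iota=|V(H)|=n/3$ (already $P_3$ and $P_6$ are instances); this is consistent with the extremal characterizations cited in the paper \cite{lmm2024,boyergoddard}.
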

\noindent
The graphs attaining the bound were partially determined in \cite{lmm2024} by the second, third and fourth author of the present paper, and fully determined in \cite{boyergoddard} by Boyer and Goddard.

Generalizing Theorems~\ref{thm:Ore} and \ref{thm:caro}, the first author of the present paper, Fenech and Kaemawichanurat \cite{borgcliques1} showed that for any connected $n$-vertex graph $G$ and $k \geq 1$, $\iota(G, K_k) \leq n/(k+1)$ unless $G \simeq K_k$ or $k=2$ and $G \simeq C_5$. For $k = 3$, this is given by the cycle isolation bound in \cite{borgcycles}, which is extended in \cite{borgconnected}. These bounds are sharp and settled two problems in \cite{adriana}. In \cite{borgcliques2}, an upper bound in terms of the number of edges is given, and the graphs attaining it are determined. 
\vspace{1mm}

In this paper, we give sharp upper bounds on $\iota(G, K_{1,k})$ for $k \geq 1$, where $K_{1,k}$ is a star with $k$ edges. A copy of $K_{1,k}$ is called a \emph{$k$-star}. A $k$-star that is a subgraph of $G$ will be called a \emph{$k$-star of $G$}. From now on, we denote $\iota(G, K_{1,k})$ by $\iota_k(G)$. Notice that, with this notation, $\iota_1(G) = \iota(G)$. 
A $K_{1,k}$-isolating set will be called a $k$-\emph{isolating set}, and the $K_{1,k}$-isolation number will be called the \emph{$k$-isolation number}. A $k$-isolating set of $G$ of size $\iota_k(G)$ will be called a \emph{minimum $k$-isolating set} of $G$.

The \emph{maximum degree} of $G$, denoted by $\Delta (G)$, is the maximum of the degrees of the vertices of $G$. We simply write $\Delta$ if the graph is clear from the context. Notice that a subset $D$ of $V(G)$ is a $k$-isolating set of $G$ if and only if $\Delta(G-N[D]) \le k-1$. Hence, $\iota_k(G)>0$ if and only if $\Delta(G) \ge k$.

We denote by $L(G)$ the set of leaves of $G$, and by $\ls(G)$ the number $|L(G)|$ of leaves of $G$. We call $\ls(G)$ the \emph{leaf order} of $G$. We abbreviate $\ls(G)$ to $\ls$ if the graph is clear from the context.

In Section~\ref{sec:upperboundsisolation}, we prove the following result. 

\begin{Thm}\label{thm:orderminusleaves_k} 
If  $G$ is a connected graph of order $n \ge 3$ and leaf order $\ell$, and $G$ is not a star, then for any $k \geq 1$, 

\begin{equation} \iota_k(G)\le \frac{n-\ell}{2}. \label{iota_k bound}
\end{equation}
Moreover,
for any $k \geq 1$, $r \geq 1$ and $n \ge \max \{(k+2)r, 4\}$, there exists an $n$-vertex graph $G$ 
such that $\iota_k(G) = \frac{n-\ell}{2} = r$.
\end{Thm}

\noindent Theorem~\ref{thm:orderminusleaves_k} is surprising in at least two ways. Firstly, the bound 
is independent of $k$ but attainable for any $k$ in the strong manner described in the second part of the theorem. Secondly, (\ref{iota_k bound}) is equivalent to Theorem~\ref{thm:orderminusleaves}. Indeed, Theorem~\ref{thm:orderminusleaves} is the case $k = 1$ of (\ref{iota_k bound}), and 
Theorem~\ref{thm:orderminusleaves} implies (\ref{iota_k bound}). Note that (\ref{iota_k bound}) improves Theorem~\ref{thm:caro} for $\ell > n/3$.

In their seminal paper \cite{adriana}, Caro and Hansberg also established the following generalization of Theorem~\ref{thm:caro} for trees.

\begin{Thm}\label{thm:caro_trees} 
\cite{adriana}
If $G$ is an $n$-vertex tree that is not a $k$-star, then $\iota_k(G)\le n/(k+2)$. Moreover, the bound is sharp.
\end{Thm}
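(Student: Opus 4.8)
The plan is to prove Theorem~\ref{thm:caro_trees} by induction on the order $n$ of the tree, aiming to construct a $k$-isolating set of size at most $n/(k+2)$. The key idea is that whenever we place a vertex $v$ into our isolating set $D$, removing $N[v]$ from the tree ``pays for'' $v$ by deleting enough vertices so that the ratio $1 : (k+2)$ is maintained. Specifically, I would root the tree $T$ at a leaf, consider a deepest vertex structure, and look at a vertex $u$ at maximum distance from the root whose subtree still contains a $k$-star centered appropriately. The natural quantity to control is: each chosen vertex should account for at least $k+2$ vertices being removed or rendered harmless.

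First I would set up the induction: the base cases are small trees (those with $n < k+2$, or trees that are themselves stars with fewer than $k$ edges), for which $\iota_k(T) = 0$ suffices since no $k$-star is present. For the inductive step, I would locate a longest path $P$ in $T$ and let $\ell$ be a leaf-endpoint of $P$ with its neighbor $u$ (the \emph{support vertex}). The branching structure at $u$ and at its parent is what drives the argument. If $u$ has degree at least $k+1$, then $u$ together with $k$ of its leaf-children forms a $k$-star, so $u$ must be ``handled''; I would select $u$ into $D$, remove $N[u]$, and argue that $\deg(u) + 1 \geq k+2$ vertices are deleted, leaving a smaller tree (or forest) on at most $n - (k+2)$ vertices to which induction applies. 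The more delicate case is when $u$ has small degree, so that no $k$-star is centered at $u$; then the $k$-star nearest the bottom of the tree must be centered at $u$'s parent (or higher), and I would need to push the chosen vertex up one level and carefully count the vertices removed, possibly grouping several pendant branches together.

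The main obstacle I expect is handling the case analysis on the degrees of the bottom support vertices when they are too small to center a $k$-star on their own, since then a single deleted closed neighborhood may remove fewer than $k+2$ vertices and the naive counting fails. The resolution should be to choose the vertex one level higher and exploit that a $k$-star centered there forces at least $k$ neighbors, so that the closed neighborhood removed, combined with the pendant paths hanging below, again totals at least $k+2$ vertices; the longest-path assumption guarantees these pendant branches are short (essentially single edges or single vertices), which makes the count tractable. Care is needed to ensure the resulting smaller graph is still a forest of trees to which the induction hypothesis (applied componentwise, with the isolation number being additive over components) can be applied, and to verify that components that become $k$-stars or too small are correctly credited as contributing $0$ to the isolation number.

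For sharpness, I would exhibit an explicit extremal family: take a ``spider-like'' or path-based construction where $n$ is a multiple of $k+2$ and the tree is built from $n/(k+2)$ copies of a gadget on $k+2$ vertices (for instance, a path $P_{k+2}$ or a small star-augmented unit), joined so that each gadget genuinely requires one vertex in any $k$-isolating set, forcing $\iota_k(T) = n/(k+2)$ and showing the bound cannot be improved. I would then note that this motivates the refined bounds in terms of the number of leaves that form the core contribution of the present paper.
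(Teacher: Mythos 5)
First, a caveat: the paper does not prove Theorem~\ref{thm:caro_trees} at all --- it is quoted from Caro and Hansberg \cite{adriana} --- so there is no internal proof to compare against, and I can only assess your outline on its own terms.

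Your outline follows the standard inductive scheme, but it has a genuine gap exactly where you flag the argument as ``delicate.'' The concrete error is your claim that components of $T-N[v]$ that ``become $k$-stars \ldots\ are correctly credited as contributing $0$ to the isolation number.'' A component isomorphic to $K_{1,k}$ contributes $1$, not $0$, while it has only $k+1<k+2$ vertices, so the per-component bound $\iota_k(C)\le |V(C)|/(k+2)$ fails for it and the additive count collapses. This is not hypothetical: take $k=2$ and the $7$-vertex tree consisting of the path $u_0u_1u_2u_3u_4u_5$ with one extra leaf attached to $u_1$. Then $\deg(u_1)=3\ge k+1$, your rule selects $u_1$, and $T-N[u_1]$ is the path $u_3u_4u_5\simeq K_{1,2}$, forcing a second chosen vertex and giving $|D|=2>7/4=n/(k+2)$, even though $\{u_2\}$ alone is a $2$-isolating set. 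So the greedy choice of the bottom support vertex is sometimes simply the wrong choice; the selected vertex must be pushed up to $u_2$ or $u_3$ depending on the local degrees, and the recursion is cleaner if applied to the single connected subtree $T_i'$ obtained by cutting one edge of the longest path (as in the proofs of Theorems~\ref{th:orderplusleaves} and~\ref{th:iotakupperbound}), with the case where that subtree is itself a $k$-star handled explicitly, since the induction hypothesis excludes $k$-stars. Because your proposal defers precisely this case analysis without resolving it, it is an outline with a flawed key step rather than a proof. Your sharpness construction (a chain of $n/(k+2)$ gadgets, each a $k$-star joined by an edge from its center to a path vertex) is sound and is essentially the tree $H$ built in the proof of Theorem~\ref{thm:orderminusleaves_k}.
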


In Section~\ref{sec:upperboundsisolationtree}, the bound in (\ref{iota_k bound}) is improved for $k = 1$ and any tree with $\ell < n/3$. The two bounds together improve the one in Theorem~\ref{thm:caro_trees} for $k = 1$ (also given by Theorem~\ref{thm:caro}) except that the three bounds are equal when $\ell = n/3$. Section~\ref{sec:upperboundsstarisolation} provides a sharp upper bound on $\iota_k(T)$ for any $k \ge 2$ and any tree $T$ in terms of $n$ and $\ell$. Surprisingly, while this bound also holds for $k = 1$, it is inferior to the one in Section~\ref{sec:upperboundsisolationtree}, and therefore not sharp, for $k = 1$. The bounds in Sections~\ref{sec:upperboundsisolation} and \ref{sec:upperboundsstarisolation} together improve Theorem~\ref{thm:caro_trees} except when $\frac{k-1}{k+2}n \le \ell \le \frac{k}{k+2}n$. The graphs attaining the above-described new bounds are characterized.
\vspace{2mm} 

The following result, proved in Section~\ref{sec:upperboundsstarisolation}, is the improvement of Theorem~\ref{thm:caro_trees} that the isolation bounds we establish in the subsequent sections culminate in as far as trees are concerned.

\begin{Thm} \label{thm:generaltreebound}
Let $T$ be a non-star tree of order $n$ and leaf order $\ell$. \\
(a)  
\[ \iota(T)  \leq \left\{\begin{array}{ll}
                     \frac{n + \ell}{4} < \frac{n}{3}  \quad  & \mbox{if}~~\ell < \frac{n}{3},\\\\
                     \frac{n + \ell}{4} = \frac{n - \ell}{2} = \frac{n}{3}   \quad  & \mbox{if}~~\ell = \frac{n}{3},\\\\
                     \frac{n - \ell}{2} < \frac{n}{3}  \quad  & \mbox{if}~~\ell > \frac{n}{3}.
               \end{array}\right. \]
(b) If $k \geq 2$, then
\[ \iota_{k}(T)  \leq \left\{\begin{array}{ll}
                     \frac{n + \ell}{2k+1} < \frac{n}{k+2}  \quad  & \mbox{if}~~\ell < \frac{k-1}{k+2}n,\\\\
                     \frac{n + \ell}{2k+1} = \frac{n}{k+2}  \quad  & \mbox{if}~~\ell = \frac{k-1}{k+2}n,\\\\
                     \frac{n}{k+2}  \quad  & \mbox{if}~~\frac{k-1}{k+2}n < \ell < \frac{k}{k+2}n,\\\\
                     \frac{n - \ell}{2} = \frac{n}{k+2}   \quad  & \mbox{if}~~\ell = \frac{k}{k+2}n,\\\\
                     \frac{n - \ell}{2} < \frac{n}{k+2}  \quad  & \mbox{if}~~\ell > \frac{k}{k+2}n.
               \end{array}\right. \]
Moreover, the bounds are sharp.
\end{Thm}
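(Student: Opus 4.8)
The plan is to obtain Theorem~\ref{thm:generaltreebound} by assembling the three separate isolation bounds proved in Sections~\ref{sec:upperboundsisolation}, \ref{sec:upperboundsisolationtree} and \ref{sec:upperboundsstarisolation}, together with Theorem~\ref{thm:caro_trees}, and then taking, in each range of $\ell$, whichever of them is smallest. The ingredients I would use are: (i) the general bound $\iota(T)\le\frac{n-\ell}{2}$ from Section~\ref{sec:upperboundsisolation}; (ii) the sharpened tree bound $\iota(T)\le\frac{n+\ell}{4}$, valid when $\ell\le n/3$, from Section~\ref{sec:upperboundsisolationtree}; (iii) for $k\ge2$, the bound $\iota_k(T)\le\frac{n+\ell}{2k+1}$ from Section~\ref{sec:upperboundsstarisolation}; (iv) the bound $\iota_k(T)\le\frac{n}{k+2}$ of Theorem~\ref{thm:caro_trees}; and (v) the monotonicity $\iota_k(T)\le\iota(T)$, which holds because any isolating set $D$ leaves $T-N[D]$ without edges, hence without any copy of $K_{1,k}$. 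As in the source results, $T$ is understood to avoid the star exceptions for which these bounds fail.

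For part (a) everything reduces to comparing $\frac{n+\ell}{4}$ with $\frac{n-\ell}{2}$. A one-line computation shows $\frac{n+\ell}{4}\le\frac{n-\ell}{2}$ if and only if $3\ell\le n$, with equality exactly at $\ell=n/3$; the same manipulations give $\frac{n+\ell}{4}<\frac{n}{3}$ exactly when $\ell<n/3$, and $\frac{n-\ell}{2}<\frac{n}{3}$ exactly when $\ell>n/3$. Using (ii) for $\ell<n/3$ and (i) for $\ell\ge n/3$, and noting that the two bounds both equal $n/3$ at $\ell=n/3$, yields the three displayed lines verbatim.

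For part (b) the identical idea is applied to the three quantities $\frac{n+\ell}{2k+1}$, $\frac{n}{k+2}$ and $\frac{n-\ell}{2}$. The routine cross-multiplications give $\frac{n+\ell}{2k+1}\le\frac{n}{k+2}$ if and only if $\ell\le\frac{k-1}{k+2}n$, and $\frac{n-\ell}{2}\le\frac{n}{k+2}$ if and only if $\ell\ge\frac{k}{k+2}n$, with equalities precisely at the two thresholds. Combining (iii) on $[\,0,\frac{k-1}{k+2}n\,]$, (iv) on the middle interval, and (i) together with (v) on $[\,\frac{k}{k+2}n,\,n\,]$ produces the five displayed lines, the two boundary lines simply recording where consecutive bounds agree.

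The remaining and more substantial task is sharpness, which I would inherit from the sharpness statements of the section theorems: in each regime I would exhibit the extremal family already built for the corresponding bound — trees meeting $\frac{n+\ell}{4}$ (respectively $\frac{n+\ell}{2k+1}$) for small $\ell$, the Theorem~\ref{thm:caro_trees} extremal trees in the middle $k\ge2$ range, and trees meeting $\frac{n-\ell}{2}$ for large $\ell$. The main obstacle is not the chain of inequalities, which is purely arithmetic, but checking that such extremal trees exist for every admissible pair $(n,\ell)$ in the relevant range: one must realize the required number of leaves while meeting the divisibility conditions that make the bound an integer (for instance $4\mid n+\ell$, or $(2k+1)\mid n+\ell$), and one must ensure that at each threshold the two coincident bounds are attained by mutually consistent constructions. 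The star-type and small-tree exceptions would then be dispatched by direct inspection, exactly as in the sections they come from.
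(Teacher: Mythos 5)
Your proposal is correct and follows essentially the same route as the paper, which simply combines Theorems~\ref{thm:orderminusleaves_k}, \ref{th:orderplusleaves}, \ref{th:iotakupperbound} and \ref{thm:caro_trees} and compares the resulting bounds by the same elementary cross-multiplications, with sharpness inherited from the extremal families of those constituent results. The only slip is your remark that the bound $\iota(T)\le\frac{n+\ell}{4}$ is ``valid when $\ell\le n/3$'': Theorem~\ref{th:orderplusleaves} holds for all trees with no restriction on $\ell$, but since you only invoke it in that regime this does not affect the argument.
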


We conclude this section with some further terminology, notation and known results.

\vspace{2mm}

A \emph{support vertex} is a vertex with at least one leaf adjacent to it.  A \emph{strong support vertex} is a support vertex with at least two adjacent leaves. The \emph{diameter} of $G$, denoted by $\diam (G)$, is the maximum of the distances in $G$, that is, $\diam (G) = \max\{{\rm dist}_G(u,v) \colon u, v \in V(G)\}$, where ${\rm dist}_G(u,v)$ denotes the distance between $u$ and $v$. We often use the well-known fact that the diameter of a tree $T$ is the length of a longest path of $T$, and we call such a path a \emph{diametral path} of $T$.
For undefined terms, we refer the reader to \cite{CHL,West}.

\vspace{3mm}

\color{black}
The known equation (obtained from the handshaking lemma) \begin{equation} \ell =2+\sum_{i=3}^{\Delta} n_i (i-2) \label{lformula}
\end{equation}
for the number $\ell$ of leaves of a tree with maximum degree $\Delta \geq 3$ in terms of $n_3,\dots, n_{\Delta}$, where $n_i$ denotes the number of vertices of degree $i$,
will be used often.

Next, we prove two general properties of isolating sets.

\begin{Lem}\label{lem:isolatingnoleaves} If $k \geq 1$ and $G$ is a connected $n$-vertex graph with $n \geq 3$, then $G$ has a minimum $k$-isolating set with no leaves.
\end{Lem}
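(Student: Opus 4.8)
The plan is to use an exchange (extremal) argument. I would consider the family of all minimum $k$-isolating sets of $G$ and, among them, choose a set $D$ that has the fewest leaves; the goal is then to show that this $D$ contains no leaves at all. Suppose, for contradiction, that $D$ contains a leaf $v$. Since $G$ is connected with $n \geq 3$, the vertex $v$ has a unique neighbor $u$, and $u$ cannot itself be a leaf: otherwise the component containing $u$ and $v$ would be a copy of $K_2$, contradicting connectedness together with $n \geq 3$. So $\deg(u) \geq 2$.

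The key step is to replace $v$ by $u$, forming $D' = (D \setminus \{v\}) \cup \{u\}$, and to verify that $D'$ is still a $k$-isolating set of size at most $|D|$. The crucial observation is the closed-neighborhood containment $N[v] = \{v,u\} \subseteq N[u]$, which holds because $u \in N[u]$ and $v$, being adjacent to $u$, also lies in $N[u]$. Consequently $N[D] \subseteq N[D']$, so $V(G - N[D'])$ is contained in $V(G - N[D])$. Since $G - N[D]$ contains no copy of $K_{1,k}$ and any induced subgraph of a $K_{1,k}$-free graph is again $K_{1,k}$-free, the graph $G - N[D']$ contains no $k$-star either; hence $D'$ is $k$-isolating.

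It remains to compare sizes and leaf counts. Because $D' \subseteq (D \setminus \{v\}) \cup \{u\}$, we have $|D'| \leq |D|$. If $u \in D$, then in fact $D' = D \setminus \{v\}$ is a $k$-isolating set strictly smaller than $D$, contradicting the minimality of $D$; thus $u \notin D$ and $|D'| = |D|$, so $D'$ is also a minimum $k$-isolating set. Finally, the set of leaves of $D'$ is exactly the set of leaves of $D$ with $v$ removed (and $u$, a non-leaf, added), so $D'$ has strictly fewer leaves than $D$, contradicting the choice of $D$. This contradiction shows $D$ has no leaves, which proves the lemma.

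I expect this argument to be essentially routine; there is no single hard step. The only points requiring care are the two edge cases that must be checked to make the exchange valid: first, that the replacement vertex $u$ is genuinely not a leaf (this is where the hypothesis $n \geq 3$ enters, to exclude $G \simeq K_2$), and second, that the replacement does not increase the size of the set, which forces $u \notin D$ via minimality. Both verifications are short, and the extremal choice of a minimum isolating set with the fewest leaves packages the induction on the number of leaves cleanly into a single contradiction.
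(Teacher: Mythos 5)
Your proof is correct and takes essentially the same approach as the paper: replacing a leaf in the isolating set by its (non-leaf) support vertex, justified by the containment $N[v]\subseteq N[u]$, with $n\ge 3$ and connectedness guaranteeing the support vertex is not itself a leaf. The only difference is cosmetic — the paper swaps all leaves out at once and compares cardinalities, while you do one swap at a time inside an extremal choice of a minimum isolating set with fewest leaves.
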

\begin{proof}
Since $G$ is connected and $n \geq 3$, no support vertex of $G$ is a leaf. Let $D$ be a minimum $k$-isolating set of $G$. 
Suppose that $D$ contains leaves. Obviously, the set $D'$ obtained by replacing the leaves of $D$ by their support vertices is also a $k$-isolating set of $G$. Since $|D| = \iota_k(G) \le |D'| \le |D|$, $D'$ is a minimum $k$-isolating set of $G$.
\end{proof}

\begin{Lem}\label{isolatingnoP2} If $G$ is a connected $n$-vertex graph with $n \geq 5$, then $G$ has a minimum isolating set with no leaves and no support vertices of degree $2$.
\end{Lem}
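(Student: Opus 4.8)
The plan is to build the required set by starting from the conclusion of Lemma~\ref{lem:isolatingnoleaves} and then repairing the degree-$2$ support vertices one at a time via a local swap. Recall that here ``isolating'' means $K_2$-isolating, so a set $D$ is isolating exactly when $V\setminus N[D]$ is independent. First I would invoke Lemma~\ref{lem:isolatingnoleaves} with $k=1$ to fix a minimum isolating set $D$ with no leaves, and among all such sets I would choose one that minimizes the number of its elements that are support vertices of degree $2$. The goal is to prove this minimum equals $0$, arguing by contradiction.

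Suppose $v\in D$ is a support vertex of degree $2$, with a leaf neighbor $u$ and a second neighbor $w$. I would first observe that $w$ is not a leaf: otherwise $\{u,v,w\}$ would be a connected component isomorphic to $P_3$, forcing $n=3$ and contradicting $n\ge 5$. In particular $v$ has a unique non-leaf neighbor $w$, and I set $D'=(D\setminus\{v\})\cup\{w\}$.

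The first thing to verify is that $D'$ is still isolating. Since $N[v]=\{u,v,w\}$ while both $v,w\in N[w]$, the only vertex that $N[D]$ can contain but $N[D']$ misses is the leaf $u$; hence $V\setminus N[D']\subseteq (V\setminus N[D])\cup\{u\}$. The only neighbor of $u$ is $v\in N[D']$, so $u$ is isolated in $G-N[D']$, which makes $(V\setminus N[D])\cup\{u\}$ independent and therefore $V\setminus N[D']$ independent as well. Thus $D'$ is isolating, and since $|D'|\le|D|=\iota(G)$ it is again a minimum isolating set (in particular $w\notin D$); it has no leaves because $w$ is not a leaf.

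The crux is to confirm that the swap strictly decreases the number of degree-$2$ support vertices, i.e.\ that $w$ is not itself a degree-$2$ support vertex. This is precisely where $n\ge 5$ is used: if $w$ had degree $2$ with a leaf neighbor $x$, then, since its other neighbor $v$ is not a leaf, the vertices $u,v,w,x$ would form a connected component isomorphic to $P_4$, forcing $n=4$. Since $w$ is not a degree-$2$ support vertex, replacing $v$ by $w$ removes one such vertex from the set and creates none, contradicting the minimality of the chosen count; hence $D$ has none, as required. I expect the main obstacle to be the bookkeeping in the isolation check—ensuring that reintroducing $u$ into $G-N[D']$ cannot create an edge—rather than the two component arguments, which become immediate once the $P_3$ and $P_4$ obstructions are identified.
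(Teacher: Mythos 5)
Your proof is correct and follows essentially the same route as the paper: both start from Lemma~\ref{lem:isolatingnoleaves} and swap each degree-$2$ support vertex for its non-leaf neighbour, relying on the same key observation that for $n\ge 5$ this neighbour is neither a leaf nor itself a degree-$2$ support vertex. The only difference is presentational — you perform the swaps one at a time via an extremal choice, while the paper replaces all such vertices simultaneously — and your more detailed check that the swap preserves the isolating property is a welcome addition.
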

\begin{proof}
Suppose that $v$ is a support vertex of degree $2$ and $u$ is a leaf adjacent to $v$. Since $G$ is connected and $n \geq 5$, the other neighbour $w$ of $v$ is neither a leaf nor a support vertex of degree $2$ (otherwise, $V(G)$ consists of $u, v, w$ and a leaf $x \neq u$ adjacent to $w$). Thus, the non-leaf neighbour of a support vertex of degree $2$ is neither a leaf nor a support vertex of degree $2$. 

By Lemma~\ref{lem:isolatingnoleaves}, $G$ has a a minimum isolating set $D$ containing no leaves. Let $D'$ be the set obtained from $D$ by replacing each support vertex of degree $2$ by its non-leaf neighbour. Then, $D'$ is also an isolating set of $G$. Since $|D|=\iota (G)\le |D'|\le |D|$, the result follows.
\end{proof}


\section{A general upper bound on the isolation number}\label{sec:upperboundsisolation}

We provide an upper bound on the isolation number that is better than Theorem~\ref{thm:caro} for any graph whose leaf order is at least one third of its order. 

\begin{Thm}\label{thm:orderminusleaves} 
If $G$ is a connected graph of order $n \ge 3$ 
and leaf order $\ell$, and $G$ is not a star,
then

$$\iota(G)\le \frac{n-\ell}{2}.$$    
\end{Thm}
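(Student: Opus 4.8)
The plan is to induct on $n$, exploiting the elementary reduction that for any vertex $v$,
\[
\iota(G)\le 1+\iota\bigl(G-N[v]\bigr),
\]
applied componentwise to $G-N[v]$: if $D'$ isolates every component of $G-N[v]$, then $N_G[D'\cup\{v\}]$ covers every edge of $G$, since every edge meeting $N[v]$ is already covered and the remaining edges are exactly those of $G-N[v]$. The right quantity to track is the number of internal vertices $\phi(G)=n-\ell$, so that the target inequality reads $\iota\le \phi/2$. To push the induction through one step it therefore suffices to find a vertex $v$ for which deleting $N[v]$ drops the internal count by at least two, because then $\iota(G)\le 1+\iota(G-N[v])\le 1+\phi(G-N[v])/2\le 1+(\phi(G)-2)/2=\phi(G)/2$.

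The choice of $v$ is guided by the observation that removing $N[v]$ eliminates every internal vertex lying in $N[v]$ (and can only demote further survivors from internal to leaf or isolated status, which also helps). Hence it is enough to select an internal vertex $v$ possessing an internal neighbour, for then $v$ and that neighbour are two internal vertices inside $N[v]$ and $\phi$ drops by at least two. First I would produce such an adjacent pair concretely by taking a diametral path $v_0v_1\cdots v_d$ with $v_0$ a leaf: then $v_1$ is a support vertex, $v_2$ is internal, and $v_1v_2$ is the desired edge; deleting $N[v_2]$ has the convenient side effect of isolating all the leaves hanging off $v_1$. By the leaf identity~(\ref{lformula}) together with connectedness, a pair of adjacent internal vertices is guaranteed to exist whenever the internal vertices do not form an independent set, so this is where I expect the argument to bite.

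The hard part will be the bookkeeping over the components of $G-N[v]$, which may be disconnected and may throw off isolated vertices together with small or star-like pieces—precisely the configurations on which $\iota\le \phi/2$ is tight or even fails (a star $K_{1,k}$ has $\iota=1$ but $\phi/2=\tfrac12$), so the inequality is not cleanly inductive. I would discard the isolated-vertex components (they cost nothing and do not affect the count), dispatch the small base cases directly while invoking Lemma~\ref{isolatingnoP2} to restrict to minimum isolating sets with no leaves and no degree-$2$ support vertices, and then sum the bound over the nontrivial components. The genuinely delicate point is to choose $v$ so that no component of $G-N[v]$ is a star whose deficit would overrun the single unit of budget spent on $v$; I expect that taking $v$ as deep as possible along a longest path keeps each deletion local, forces the demoted vertices to be charged on the leaf side of the ledger, and leaves every surviving component either trivial or again within the scope of the induction hypothesis.
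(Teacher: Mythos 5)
Your proposal does not follow the paper's route, and as it stands it has a genuine gap at exactly the point you flag as ``where the argument bites.'' The induction step needs $\iota(C)\le \phi(C)/2$ for every nontrivial component $C$ of $G-N[v]$, but this inequality is simply false for $K_2$, for $P_3$, and for every star $K_{1,k}$ (all have $\iota=1$ while $\phi/2\le 1/2$), and your choice $v=v_2$ on a longest path does not prevent such components from arising. Concretely, take the spider with center $c$ and $t$ legs $c\,w_i\,x_i\,y_i$: the longest path is $y_1x_1w_1cw_2x_2y_2$, so $v_2=w_1$, and $G-N[w_1]$ consists of an isolated vertex together with $t-1$ components isomorphic to $P_3$, on none of which the induction hypothesis is available. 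You acknowledge this obstruction and then defer it to an expectation (``I expect that taking $v$ as deep as possible \dots leaves every surviving component \dots within the scope of the induction hypothesis''), which is precisely the claim that fails; no bookkeeping over a single unit of budget can be carried out until these exceptional components are classified and handled, and that classification is the entire content of a proof along these lines. Two smaller issues: your argument that $v_0$ is a leaf and $v_1$ is a support vertex is a tree argument and does not transfer to general connected graphs (which may have $\ell=0$); and the statement itself fails for stars, so any induction must isolate them explicitly as base cases rather than hope they do not occur.

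The paper's proof is a two-line reduction with no induction at all: let $G'=G-L(G)$, which is connected on $n-\ell$ vertices; since every edge of $G$ has at least one non-leaf endpoint, any dominating set of $G'$ is an isolating set of $G$, whence $\iota(G)\le\gamma(G')\le |V(G')|/2=(n-\ell)/2$ by Ore's theorem (Theorem~\ref{thm:Ore}). This sidesteps the component analysis entirely and pushes all the difficulty into a classical result whose extremal graphs are known, which is then exploited in Theorem~\ref{thm:charorderminusleaves}. (Note that Ore's bound requires $G'\not\simeq K_1$, i.e.\ $G$ is not a star --- the same exceptional family that defeats your induction --- so even this route tacitly needs stars excluded.) I would recommend abandoning the induction and adopting the delete-the-leaves reduction.
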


\begin{proof}
Let $G'$ be the graph obtained by removing the leaves of $G$, and let $D$ be a minimum dominating set of $G'$. Then,
$G'$ is a connected graph of order at least 2 and $D$ is trivially an isolating set of $G$  (see Figure~\ref{fig:demoorderminusleaves}). Hence, 
\begin{equation}\label{eqdom}
\iota (G)\le \gamma(G')\le \frac{\vert V(G')\vert}2=\frac{n-\ell}2    
\end{equation} 
with the second inequality given by Theorem~\ref{thm:Ore}.
\end{proof}

\begin{figure}[ht!]
\centering
 \includegraphics[width=0.45\textwidth]{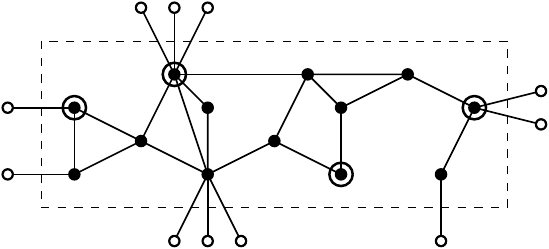}
\caption{A graph $G$: white vertices are in $L(G)$ and black vertices in $G-L(G)$. Circled vertices form a dominating set of $G-L(G)$ and, consequently, an isolating set of $G$.}
\label{fig:demoorderminusleaves}
\end{figure} 

We now show that Theorem~\ref{thm:orderminusleaves} surprisingly generalizes to Theorem~\ref{thm:orderminusleaves_k}. 

\begin{proof}[Proof of Theorem~\ref{thm:orderminusleaves_k}]
Trivially, $\iota_k(G) \leq \iota(G)$, so $\iota_k(G)\le \frac{n-\ell}{2}$ by Theorem~\ref{thm:orderminusleaves}. We now prove the second part. 

Suppose first that $k\ge 1$ and $r\ge 2$. Let $H_0 \simeq P_r$. Thus, $E(H_0) = \{v_i v_{i+1} \colon i \in [r-1]\}$, where $\{v_1, \dots, v_r\} = V(H_0)$. Let $H_1, \dots, H_r$ be $k$-stars such that $H_0, H_1, \dots, H_r$ are pairwise vertex-disjoint. For each $i \in [r]$, 
let $w_i \in V(H_i)$ with $N_{H_i}[w_i] = V(H_i)$. Let $H$ be the graph with $V(H) = \bigcup_{i=0}^r V(H_i)$ and $E(H) = \{v_iw_i \colon i \in [r]\} \cup \bigcup_{i=0}^r E(H_i)$. We have $|V(H)| = (k+2)r$. If $n = (k+2)r$, then let $G = H$. If $n > (k+2)r$, then let $G$ be the graph obtained from $H$ by adding $n - (k+2)r$ vertices to $H$ and making them adjacent to $w_r$. Thus, $|V(G)| = n$, $L(G) = \bigcup_{i=1}^r N_G(w_i) \setminus \{v_i\}$, and $n - \ell = |V(G) \setminus L(G)| = |\bigcup_{i=1}^r \{v_i, w_i\}| = 2r$. Clearly, if $I$ is a $k$-isolating set of $G$, then $I \cap (V(H_i) \cup \{v_i\})$ for each $i \in [r]$. Thus, $\iota_k(G) \geq r$. Since $\{w_i \colon i \in [r]\}$ is a $k$-isolating set of $G$, $\iota_k(G) = r$.  

Now suppose $k\ge 1$, $r=1$  and $n\ge \max\{k+2,4\}$. Let $G$ be an $n$-vertex graph obtained by attaching a leaf to one of the leaves of an $(n-2)$-star.  Then, $\ell=n-2$ and 
$\iota_k(G)=1=\frac{n-\ell}2$.
\end{proof}

The graphs attaining the bound in Theorem~\ref{thm:orderminusleaves_k} can be characterized from the graphs attaining the upper bound in Theorem~\ref{thm:Ore}. Let $\mathcal{G}$ be the family of connected graphs with $\gamma(G)={\vert V(G) \vert}/2$. Let $\mathcal{G}'$ be the family of graphs $G$ such that $G$ is obtained by attaching exactly one leaf to every vertex of a connected graph $H$ of order at least 2 (that is, $G$ is the {\it corona product} of $H$ and $K_1$). In the above-mentioned papers \cite{PX, FJKR}, it was shown that

 $$\mathcal{G} = \{G \colon G \simeq C_4\textrm{ or } G\simeq K_2\} \cup \mathcal{G}'.$$
 
\begin{Thm}\label{thm:charorderminusleaves}
 Let $G$ be a connected graph of order $n\ge 3$.
Then, 
$\iota_k(G) = \frac{n-\ell}{2}$
if and only if $G$ is obtained in one of the following ways:
\begin{enumerate}[(a)]
\item  by attaching a non-negative number of leaves to each vertex of $K_2$ with at least $k-1$ leaves  attached to one of them;
\item by attaching at least $k$ leaves to each vertex of a $4$-cycle;
\item by attaching a non-negative number of leaves to each vertex of some $G' \in \mathcal{G'}$ in such a way that at least $k$ leaves are attached to each leaf of $G'$.   
\end{enumerate}
\end{Thm}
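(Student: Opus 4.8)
The plan is to prove both directions of the characterization by relating the graphs $G$ with $\iota_k(G) = \frac{n-\ell}{2}$ to the extremal family $\mathcal{G}$ for Ore's bound, exploiting the chain of inequalities used to prove Theorem~\ref{thm:orderminusleaves_k}. Recall that the upper bound was obtained from the chain $\iota_k(G) \le \iota(G) \le \gamma(G') \le \frac{|V(G')|}{2} = \frac{n-\ell}{2}$, where $G' = G - L(G)$. Equality throughout is the engine driving the argument: if $\iota_k(G) = \frac{n-\ell}{2}$, then \emph{every} inequality in this chain must be an equality. In particular, $\gamma(G') = \frac{|V(G')|}{2}$, which forces $G' \in \mathcal{G}$, i.e.\ $G' \simeq C_4$ or $G' \in \mathcal{G}'$ by the cited Payan--Xuong / Fink--Jacobson--Kinch--Roberts result. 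This immediately suggests the two cases in the statement, since case (a) corresponds to $G' \simeq C_4$ and case (b) corresponds to $G' \in \mathcal{G}'$.

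For the forward direction, first I would settle the structural consequence $G' \in \mathcal{G}$. The subtlety is that $G'$ is obtained by deleting \emph{all} leaves of $G$, so $G'$ need not contain any leaves itself, but vertices of $G$ that were support vertices may become leaves of $G'$; hence one must track carefully how leaves of $G$ attach to $G'$. Having fixed $G' \simeq C_4$ or $G' \in \mathcal{G}'$, the remaining work is to determine the \emph{number} of leaves of $G$ that may be attached to each vertex of $G'$. This is where the equality $\iota_k(G) = \iota(G)$ and the fact that the $k$-isolating number must equal the dominating number of $G'$ come into play: if a vertex $v$ of $G'$ that should carry leaves (a leaf of $G'$ in case (b), or any vertex in case (a)) had fewer than $k$ attached leaves, then one could construct a smaller $k$-isolating set, because a vertex with fewer than $k$ leaf-neighbours does not by itself host a $k$-star in $G - N[D]$, allowing us to drop it from a candidate isolating set and contradict minimality. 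Conversely, vertices of $G'$ arising as the internal (non-leaf) vertices of the corona $H$ impose no lower bound, which matches the phrase ``a non-negative number of leaves'' in (b).

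For the reverse direction, I would verify directly that each graph described in (a) or (b) satisfies $\iota_k(G) = \frac{n-\ell}{2}$. The upper bound is automatic from Theorem~\ref{thm:orderminusleaves_k}, so only the lower bound $\iota_k(G) \ge \frac{n-\ell}{2}$ needs checking. For this I would exhibit, for any $k$-isolating set $D$, a disjoint collection of $\frac{n-\ell}{2}$ ``gadgets'' (for instance, in case (b), pairs consisting of a vertex of $H$ together with its attached structure carrying $\ge k$ leaves), each of which must meet $D$ because each gadget contains a $k$-star that survives unless $D$ intersects the closed neighbourhood appropriately. The condition that at least $k$ leaves are attached to the relevant vertices is exactly what guarantees a genuine $k$-star is present, so that these gadgets genuinely force $|D|$ to be large; this is the mirror image of the forward argument.

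The main obstacle I expect is the bookkeeping in the forward direction when passing between $G$ and $G'$: one must argue simultaneously that (i) $G'$ lies in $\mathcal{G}$, (ii) the minimum dominating set of $G'$ realizing equality in Ore's bound has the right structure relative to the attached leaves, and (iii) the leaf-count condition ($\ge k$ leaves at the designated vertices) is both necessary and sufficient, handling the corona case and the $C_4$ case uniformly. In particular, care is needed because a minimum $k$-isolating set might \emph{a priori} include vertices of $G$ that are not in $G'$ or might dominate $G'$ in a way not corresponding to a minimum dominating set of $G'$; Lemmas~\ref{lem:isolatingnoleaves} and \ref{isolatingnoP2} should be invoked to clean up such configurations and reduce to isolating sets lying in $V(G')$ with the desired structure.
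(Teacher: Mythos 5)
Your proposal follows essentially the same route as the paper's proof: forcing equality throughout the chain $\iota_k(G)\le\iota(G)\le\gamma(G')\le \frac{|V(G')|}{2}$ to place $G'=G-L(G)$ in $\mathcal{G}$ (hence $G'\simeq C_4$ or $G'\in\mathcal{G}'$), deriving the ``at least $k$ leaves'' condition by constructing a smaller $k$-isolating set otherwise, and proving the converse lower bound via Lemma~\ref{lem:isolatingnoleaves} together with the disjoint gadgets formed by each leaf of $G'$ and its $\ge k$ attached leaves. There is no substantive difference in approach.
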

\begin{figure}[ht!]
\centering
\includegraphics[width=0.8\textwidth]{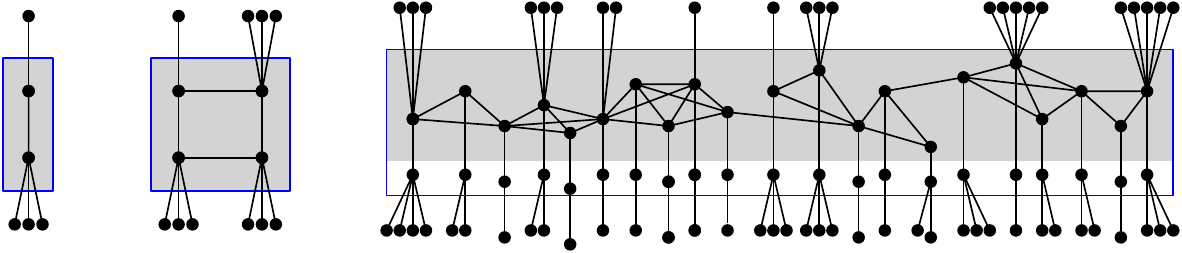}
\caption{Graphs $G$ with $\iota(G)=\frac{n-\ell}2$.} 
\label{fig:exampleorderminusleaves}
\end{figure} 
\begin{proof}
Suppose $\iota_k(G) = \frac{n-\ell}2$. Since $\iota_k(G) \leq \iota(G)$, $\iota(G) = \frac{n-\ell}2$ by Theorem~\ref{thm:orderminusleaves}. 
Let $G'$ be the graph obtained by removing all the leaves from $G$. Then, all the inequalities in (\ref{eqdom}) must be equalities. 
Thus, $G'\in \mathcal{G}$. 
Suppose first $G'\simeq K_2$. Then, $n=2+\ell$  and $\iota_k(G)=1$. Therefore, there is at least one leaf attached at each vertex of $K_2$ (otherwise, $\ell=n-1$) and at least $k-1$ attached at one of the vertices of $K_2$ (otherwise, $G$ has maximum degree less than $k$, implying that $\iota_k(G)=0$).
Now, suppose $G' \simeq C_4$. Then, $n=4+\ell$. Since $\iota_k(G) = \frac{n-\ell}{2}$, $\iota_k(G) = \frac{4}{2} = 2$. Thus, $G$ is obtained by attaching at least $k$ leaves to each vertex of $G'$ (as otherwise $\iota_k(G) = 1$). 
Finally, suppose $G' \in \mathcal{G'}$. Let $r = \frac{n-\ell}{2}$. We have that $G'$ has exactly $2r$ distinct vertices $v_1, \dots, v_r, w_1, \dots, w_r$ such that $G'[\{v_1, \dots, v_r\}]$ is a connected graph $H'$ and, for each $i \in [r]$, $w_i$ is a leaf of $G'$ adjacent to $v_i$. Consider any $i \in [r]$. By construction, $w_i$ is not a leaf of $G$, so $w_i$ is a support vertex of $G$. Suppose that $w_i$ has less than $k$ leaves adjacent to it in $G$. Since $H'$ is connected, we clearly obtain that $V(H') \setminus \{v_i\}$ is a $k$-isolating set of size $r-1$, but this contradicts $\iota_k(G) = r$. Thus, $w_i$ has at least $k$ leaves adjacent to it in $G$.

We now prove the converse. 

If (a) holds, then clearly $\iota_k(G)=1=\frac{n-\ell}2$.
If (b) holds, then clearly $\iota_k(G) \geq 2$ and any set of two vertices of the $4$-cycle is a smallest $k$-isolating set of $G$, so $\iota_k(G) = \frac{4}{2} = \frac{n-\ell}{2}$. 
Finally, suppose that (c) holds (for $k=1,$ see Figure~\ref{fig:exampleorderminusleaves}). By Lemma~\ref{lem:isolatingnoleaves}, $G$ has a minimum $k$-isolating set $D$ with no leaves, so $D \subseteq V(G')$. 
Each leaf $z$ of $G'$ is a support vertex of $G$ with $|N_G(z) \cap L(G)| \geq k$ (and hence $G[N_G[z]]$ contains a $k$-star), so at least one of $z$ and its support vertex in $G'$ is in $D$. Since no two leaves of $G'$ have the same support vertex, $|D| \geq |L(G')|$. Since $G' \in \mathcal{G}'$, $|V(G')| = 2|L(G')|$. Thus, we have 

$$\iota_k(G) = \vert D\vert \ge |L(G')| = \frac{|V(G')|}2 = \frac{n-\ell}2.$$
Together with Theorem~\ref{thm:orderminusleaves_k}, this gives us $\iota_k(G) = \frac{n-\ell}2$.
\end{proof}


\section{An upper bound on the isolation number of a tree} \label{sec:upperboundsisolationtree}
 
 From now on, we focus on trees. We use the well-known fact, resulting from (\ref{lformula}), that every tree 
 of order $n\geq 2$
has at least $2$ leaves.
\begin{Thm}\label{th:orderplusleaves}  If $T$ is a tree of order $n$ and leaf order $\ell$, then 
$$\iota(T) \leq \frac{n+\ell}{4}.$$ 
Moreover, for any integer $r \geq 1$, there exist two integers $n$ and $\ell$, and a tree $T$ of order $n$ and leaf order $\ell$, such that $\iota(T) = \frac{n + \ell}{4} = r$.
\end{Thm}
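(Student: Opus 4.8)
The plan is to prove the inequality by strong induction on the order $n$, using a longest path of $T$ to drive the reduction, and to obtain sharpness from a family of paths. For the base cases I would treat the trees of diameter at most $2$, namely $K_1$, $K_2$ and the stars $K_{1,m}$ with $m\ge 2$: in each of these $\iota(T)\le 1$ and a direct check gives $\iota(T)\le\frac{n+\ell}{4}$. For the inductive step I fix a longest path $u_0u_1\cdots u_d$ with $d\ge 3$ and use the two standard consequences of maximality: every neighbour of $u_1$ other than $u_2$ is a leaf, and every vertex that $u_2$ separates from $u_3$ lies within distance $2$ of $u_2$, so each neighbour of $u_2$ other than $u_3$ is either a leaf or a support vertex whose remaining neighbours are all leaves. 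I then record the local data at $u_2$: let $b$ be the number of its leaf-neighbours, let $t\ge 1$ be the number of its support-vertex neighbours (one being $u_1$), and let $g\ge t$ be the total number of leaves hanging from those support vertices.

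The principal reduction, R1, is to place $u_2$ in the isolating set and delete the branch $T_{u_2}$ formed by $u_2$ and all vertices it separates from $u_3$, leaving the tree $T''=T-V(T_{u_2})$, which still contains $u_3$. A short check shows that $\{u_2\}$ together with a minimum isolating set of $T''$ isolates $T$: deleting $N[u_2]$ leaves every grandchild of $u_2$ isolated, and the survivors inside $V(T'')$ form a subset of the survivors of the isolating set of $T''$. Hence $\iota(T)\le 1+\iota(T'')$. Writing $\delta=1$ when $u_3$ has degree $2$ in $T$ (so it becomes a leaf of $T''$) and $\delta=0$ otherwise, I would compute $(n+\ell)-(n''+\ell'')=1+2b+t+2g-\delta$, where $n''=|V(T'')|$ and $\ell''$ is the leaf order of $T''$. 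Since $t\ge 1$ and $g\ge t\ge 1$, this quantity is at least $4$ in every configuration except the single path-like one, $\delta=1,\ b=0,\ t=1,\ g=1$, where it equals $3$; together with the inductive bound $\iota(T'')\le\frac{n''+\ell''}{4}$, a drop of at least $4$ gives $\iota(T)\le\frac{n+\ell}{4}$.

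The only obstruction is this path-like case, which I expect to be the crux: locally $T$ is $u_0u_1u_2u_3$ with $u_1,u_2,u_3$ all of degree $2$, and R1 saves only $3$. Here I would switch to a second reduction, R2: keep $u_2$ in the set but delete the four vertices $u_0,u_1,u_2,u_3$, forming $T'''=T-\{u_0,u_1,u_2,u_3\}$, which is connected because $u_3$ meets the rest of $T$ only through its unique further neighbour $y$. The same local argument shows $\{u_2\}$ plus an isolating set of $T'''$ isolates $T$; now $n$ drops by $4$ while $\ell$ drops by $1-\delta'$, where $\delta'=1$ iff $\deg_T(y)=2$, so $n+\ell$ drops by $5-\delta'\ge 4$ and the induction closes. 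The only delicate bookkeeping throughout is the $\pm 1$ caused by an attachment vertex ($u_3$ or $y$) becoming a new leaf, which the parameters $\delta,\delta'$ absorb.

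For sharpness I would take $T=P_{4r-2}$, so that $\ell=2$ and $\frac{n+\ell}{4}=\frac{(4r-2)+2}{4}=r$. The upper bound already gives $\iota(P_{4r-2})\le r$, and the explicit set $\{v_{4j-2}\colon 1\le j\le r\}$, whose closed neighbourhoods leave only single-vertex gaps, attains it. For the matching lower bound I would observe that if $D$ isolates a path then $|N[D]|\le 3|D|$ and the at most $|D|+1$ surviving gaps each contain at most one vertex, so $4r-2\le 3|D|+(|D|+1)=4|D|+1$, forcing $|D|\ge r$. Thus $\iota(P_{4r-2})=r=\frac{n+\ell}{4}$, and the bound is sharp for every $r\ge 1$.
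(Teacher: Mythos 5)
Your proof of the inequality is essentially the paper's: the same induction on $n$ along a longest path, the same choice of $u_2$ for the isolating set, and your two reductions R1 and R2 correspond exactly to the paper's two cases ($\deg_T(u_3)\ge 3$, where the component of $T-u_2u_3$ containing $u_2$ is deleted, and $\deg_T(u_3)=2$, where $u_3$ is deleted as well); your bookkeeping via $b,t,g,\delta$ merely isolates the unique configuration in which R1 loses only $3$ from $n+\ell$, which is a subcase of the paper's $\deg_T(u_3)=2$ case. One small slip: the formulas $\ell''=\ell-(b+g)+\delta$ and $\ell'''=\ell-1+\delta'$ silently assume $u_3$ (respectively $y=u_4$) is not already a leaf of $T$, i.e.\ $d\ge 5$; for $d=3$ or $d=4$ the remaining component degenerates to $K_1$ or a star and the drop in $n+\ell$ is in fact larger, so nothing breaks, but you should either note this or dispose of $d\le 4$ up front as the paper does (there $\{u_2\}$ alone isolates $T$). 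Where you genuinely diverge is sharpness: the paper defers to the family $\mathcal{F}$ of trees assembled from copies of $P_3$ and $P_4$ (Proposition~\ref{prop:isolationtreeF} together with Theorem~\ref{charactupperbound}), which simultaneously yields the full characterization of the extremal trees, whereas your choice $T=P_{4r-2}$ with the gap-counting lower bound $n\le 3|D|+(|D|+1)$ is self-contained and immediate. Your examples are in fact members of $\mathcal{F}$ for $r\ge 2$ (two copies of $P_3$ and $r-2$ copies of $P_4$ strung into a path), so the two constructions agree; yours buys brevity, the paper's buys the extremal characterization.
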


\begin{proof} We use induction on $n$. If $n = 1$, then $\iota(T) = 0 < \frac{n + \ell}4$. If $n = 2$, then $T \simeq K_2$, so $\ell = 2$ and $\iota(T) = 1 = \frac{n + \ell}4$. Suppose $n\ge 3$. 
Let  $d=\diam (T)$.
Then, there exist $d+1$ distinct members $u_0, u_1, \dots, u_d$ of $V(T)$ such that $T[\{u_0, u_1, \dots, u_d\}]$ is a longest path $U$ of $T$. If $d = 1$, then $\{u_0\}$ is an isolating set of $T$, so $\iota(T) = 1 < \frac{5}{4} \leq \frac{n + \ell}4$. Suppose $d \geq 2$.

Consider any $i \in [d-1]$. Clearly, $T - u_iu_{i+1}$ has two components $T_i$ and $T_i'$, where $T_i$ contains $u_i$, and $T_i'$ contains $u_{i+1}$. Consequently, by the choice of $U$, the distance (in $T$) of any vertex of $T_i$ from $u_i$ is at most $i$, and the distance (in $T$) of any vertex of $T_i'$ from $u_i$ is at most $d-i$. By considering $T_1$, this particularly implies that $u_0$ is a leaf of $T$ (because if $v \in V(T_1) \setminus \{u_1\}$, then $v$ is adjacent to $u_1$, and since $T_1$ contains no cycles, $v$ is not adjacent to $u_0$). 

If $d \le 4$, then $\{u_2\}$ is an isolating set of $T$, so $\iota(T) = 1 < \frac{5}{4} \leq \frac{n+\ell}4$. Suppose $d \ge 5$. We have $\deg(u_3) \geq 2$.

Suppose $\deg_T(u_{3})\ge 3$. Let $T'= T_2'$, $n' = |V(T')|$ and $\ell' = \ell(T')$. Notice that $3 \le n'\le n-3$ and $\ell' \le \ell - 1$. Let $D'$ be a minimum isolating set of $T'$. By the induction hypothesis, $|D'| \leq \frac{n' + \ell'}{4}$. Let $D = D'\cup \{u_2\}$. By the above, $D$ is an isolating set of $T$ (see Figure~\ref{fig:demodegree3}). We have 

$$\iota(T) \leq |D|=|D'|+1\leq \frac{n'+\ell'}{4} +1\leq  \frac{n-3+\ell-1}{4} +1
= \frac{n+\ell}{4}.$$ 
    
    \begin{figure}[h]
    \centering
     \includegraphics[width=0.4\textwidth]{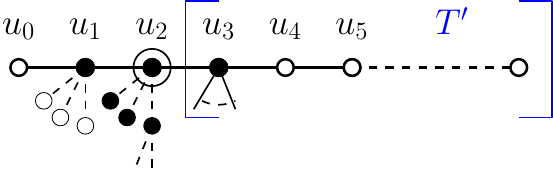}
    \caption{An isolating set of $T$ is formed by $u_2$ and an isolating set of $T'$.}
    \label{fig:demodegree3}
    \end{figure} 

Now suppose $\deg_T(u_{3}) = 2$. Let $T'= T_3'$, $n' = |V(T')|$ and $\ell' = \ell(T')$. Notice that $2 \le n'\le n-4$ and $\ell' \le \ell$. Let $D'$ be a minimum isolating set of $T'$. By the induction hypothesis, $|D'| \le \frac{n'+\ell'}{4}$. Let $D = D'\cup \{u_2\}$. By the above, $D$ is an isolating set of $T$ (see Figure~\ref{fig:demodegree2}). We have 

$$\iota(T) \leq |D| = |D'| + 1 \le \frac{n' + \ell'}{4} + 1 \leq \frac{n - 4 + \ell}{4} + 1 = \frac{n + \ell}{4}.$$ 

\begin{figure}[h]
\centering
 \includegraphics[width=0.4\textwidth]{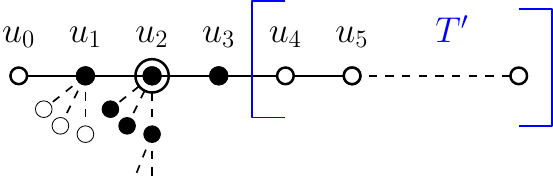}
\caption{An isolating set of $T$ is formed by $u_2$ and an isolating set of $T'$.}
\label{fig:demodegree2}
\end{figure} 

The second part of the theorem is given by Theorem~\ref{charactupperbound}.
\end{proof}

We now characterize the trees that attain the upper bound given in Theorem \ref{th:orderplusleaves}. To this aim, we introduce the following family.
\begin{cons}\label{notation:familyF}

Let $\mathcal{F}$ be the family of trees obtained in the following way. Consider copies of a path of order $3$ with one leaf labeled $a$, the other leaf labeled $c$ and the vertex of degree $2$ labeled $b$, and copies of a path of order $4$ with the leaves labeled $x$ and the vertices of degree $2$ labeled $y$, where the $3$-vertex copies and the $4$-vertex copies are pairwise vertex-disjoint.  Let $A$, $B$, $C$, $X$ and $Y$ be the sets of vertices of these copies labeled $a$, $b$, $c$, $x$ and $y$, respectively. A tree $T$ belongs to $\mathcal{F}$ if and only if $T$ can be obtained from $r \ge 2$ copies of the path of order $3$ and $s \ge 0$ copies of the path of order $4$ by adding edges with endpoints in $A\cup X$ so that no vertex in $A\cup X$ remains a leaf in $T$ (that is, the leaves of $T$ are the vertices in $C$).
\end{cons}
Notice that it is possible to construct such a tree $T$ for every $r\ge 2$ and $s\ge 0$. For example, if $s=0$, add edges between the vertices of $A$ forming a path of order $r$, and if $s\ge 1$, join the $s$ copies of $P_4$ by adding edges with endpoints in $X$ forming a path $P$ of order $4s$, add one edge joining one endpoint of $P$ with one vertex of $A$, and add edges joining the remaining vertices in $A$ with the other endpoint of $P$ (see Figure~\ref{existenceF}).

\begin{figure}[h]
\centering
 \includegraphics[width=0.9\textwidth]{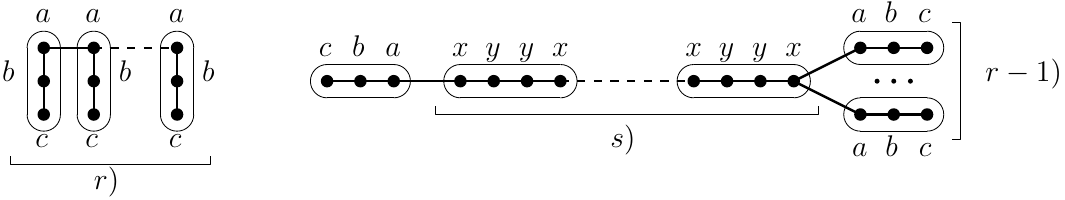}
\caption{Trees belonging to the family $\mathcal{F}$ with $r$ copies of $P_3$ (left) and with $r$ copies of $P_3$ and $s\ge 1$ copies of $P_4$.}
\label{existenceF}
\end{figure}

By definition, the trees in $\mathcal{F}$ satisfy the following properties (see an example in Figure~\ref{fig:familytreesleaves}).

\begin{figure}[h]
\centering
 \includegraphics[width=0.5\textwidth]{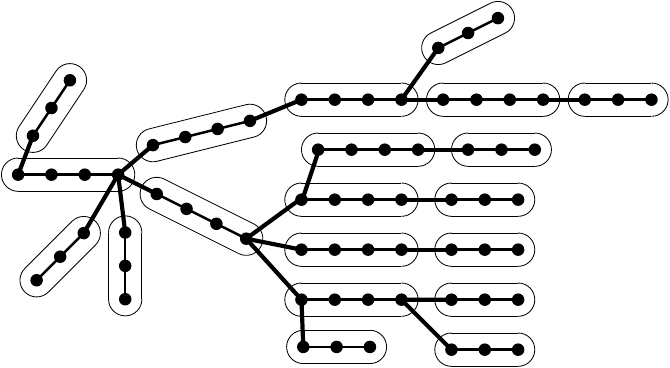}
\caption{A tree in $\mathcal{F}$ constructed from copies of $P_3$ and $P_4$.}
\label{fig:familytreesleaves}
\end{figure}

\begin{Rem}\label{propertiesF} If $T \in \mathcal{F}$ is a tree of order $n$ and leaf order $\ell$, then
\begin{enumerate}[a)]
    \item $C$ is the set of leaves of $T$; 
    \item $B$ is the set of support vertices of $T$. Moreover, if $u\in B$, then $\deg (u)=2$ and $u$ has exactly one neighbour in $A$ and one neighbour in $C$
    \item If $u\in X$, then $u$ has  exactly one neighbour in $Y$ and at least one neighbour in $X\cup A$; 
     \item If $u,v\in X$ and $uv$ is an edge of $T$, then $u$ and $v$ belong to different copies of $P_4$;
    \item If $u\in Y$, then $\deg (u)=2$  and $u$ has  exactly one neighbour in $X$ and one neighbour in $Y$;
    \item The number of copies of $P_4$ used to construct $T$ is $\frac{|X|}2$.
    Moreover, $|X|$ and $|Y|$ are equal and even;
    \item The number of copies of $P_3$ use to construct $T$ is $|A|$ and $|A|=|B|=|C|$.
    \item $n=3|A|+2|X|\ge 6$;
    \item $\displaystyle\frac {n+\ls}4=|A|+\frac{|X|}2\in \mathbb{Z}$;
    \item The vertices of $X\cup Y$ induce a forest such that all components have order a multiple $4.$ 
\end{enumerate}
    
\end{Rem}

\begin{Prop}\label{prop:isolationtreeF}
If $T\in \mathcal{F}$, then 
$\iota (T)=\frac {n+\ls}4$. 
Moreover, $T$ has a minimum isolating set formed by all the vertices in $A$ and one vertex labeled with $x$ from each copy of $P_4$.
\end{Prop}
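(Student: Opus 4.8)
The plan is to prove the two assertions of Proposition~\ref{prop:isolationtreeF} separately: first exhibit an explicit $k$-isolating (here ordinary isolating) set of size $|A| + \frac{|X|}{2}$ to establish the upper bound $\iota(T) \le \frac{n+\ell}{4}$, and then prove the matching lower bound $\iota(T) \ge \frac{n+\ell}{4}$ via a counting argument that forces any isolating set to place at least one vertex in each of $|A| + \frac{|X|}{2}$ pairwise-disjoint pieces. By Remark~\ref{propertiesF}(i), the target value $\frac{n+\ell}{4} = |A| + \frac{|X|}{2}$ is an integer, so both bounds are integers and equality of the two will finish the proof.

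For the upper bound, I would take the proposed set $D = A \cup \{$one $x$-labeled vertex per copy of $P_4\}$, which has size $|A| + \frac{|X|}{2}$ by Remark~\ref{propertiesF}(f),(g). I would then verify directly that $T - N[D]$ has no edge. Every vertex of $A$ dominates (in the closed-neighbourhood sense) its attached $P_3$, killing the edges $ab$ and $bc$ by Remark~\ref{propertiesF}(b); each chosen $x$-vertex together with its unique $Y$-neighbour (Remark~\ref{propertiesF}(c),(e)) covers the whole $P_4$ it comes from, since a $P_4$ has the form $x\!-\!y\!-\!y\!-\!x$ and one endpoint's closed neighbourhood reaches three of the four vertices, leaving a single isolated vertex. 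The only edges not internal to a single copy are the added edges with endpoints in $A \cup X$, and every such endpoint lies in $N[D]$ (vertices of $A$ are in $D$, and each $x$-vertex is adjacent to the chosen $x$-vertex of its copy or is itself chosen). I would check that the resulting leftover vertices form an independent set, so $D$ is isolating and $\iota(T) \le |A| + \frac{|X|}{2} = \frac{n+\ell}{4}$.

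The main work, and the expected obstacle, is the lower bound. Here the idea is a disjoint-packing argument: partition (or cover) $V(T)$ by the $|A|$ copies of $P_3$ and the $\frac{|X|}{2}$ copies of $P_4$, and argue that any isolating set $D$ must intersect a suitable neighbourhood associated with each piece. By Lemma~\ref{lem:isolatingnoleaves} I may assume $D$ contains no leaves, so $D \cap C = \emptyset$. For each $P_3$-copy with vertices $a,b,c$, the edge $bc$ survives unless some vertex of $N[b] \cup N[c] = \{a,b,c\}$ lies in $D$; since $c \notin D$, this forces $D$ to meet $\{a,b\}$, and because $a\in A$ may be shared among several incident added edges, the delicate point is to show these demands across the $P_3$'s and $P_4$'s cannot be satisfied by fewer than $|A|+\frac{|X|}{2}$ vertices. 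I would make this rigorous by associating to each $P_4$-copy a private edge (one of its two $Y\!-\!Y$ or $X\!-\!Y$ edges lying strictly inside that copy by Remark~\ref{propertiesF}(d),(j)) whose survival is prevented only by a vertex inside that same copy, giving $\frac{|X|}{2}$ demands on pairwise-disjoint vertex sets, and to each $P_3$-copy its private edge $bc$, giving $|A|$ further demands on disjoint sets each disjoint from the $P_4$-parts. Summing the demands over these $|A| + \frac{|X|}{2}$ pairwise-disjoint regions yields $\iota(T) = |D| \ge |A| + \frac{|X|}{2} = \frac{n+\ell}{4}$, and combining with the upper bound completes the proof.
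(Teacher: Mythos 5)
Your lower-bound argument is sound and is essentially the paper's: each copy of $P_3$ contributes the private edge $bc$, which forces $D\cap\{a,b,c\}\neq\emptyset$, and each copy of $P_4$ contributes the private edge $yy'$, whose two endpoints have closed neighbourhoods contained in that copy, forcing $D$ to meet the copy; since these $|A|+\frac{|X|}{2}$ vertex sets are pairwise disjoint, $|D|\ge |A|+\frac{|X|}{2}=\frac{n+\ell}{4}$. (The paper instead first normalizes $D$ so that $A\subseteq D$ and $D\cap(B\cup C)=\emptyset$ via Lemma~\ref{isolatingnoP2}, but the content is the same, and your disjoint-packing phrasing is if anything cleaner.)

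The gap is in the construction of the isolating set. You claim that $D=A\cup\{\text{one }x\text{-vertex per copy of }P_4\}$ is isolating for an \emph{arbitrary} choice, justified by two assertions that are both false: (i) ``one endpoint's closed neighbourhood reaches three of the four vertices'' of its $P_4$ --- in fact the two $x$-labelled vertices are the \emph{endpoints} $x$ and $x'$ of the path $x\!-\!y\!-\!y'\!-\!x'$, so $N[x]$ meets the copy only in $\{x,y\}$; and (ii) ``each $x$-vertex is adjacent to the chosen $x$-vertex of its copy or is itself chosen'' --- the two $x$-vertices of a copy are at distance $3$ and are never adjacent (Remark~\ref{propertiesF}~d) even says adjacent $X$-vertices lie in different copies). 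Consequently the edge $y'x'$ need not be destroyed. Concretely, take $r=s=2$ and let $T$ be the path $c_1b_1a_1\,x_1y_1y_1'x_1'\,x_2'y_2'y_2x_2\,a_2b_2c_2$ (added edges $a_1x_1$, $x_1'x_2'$, $x_2a_2$, so $T\in\mathcal{F}$ with $n=14$, $\ell=2$). Choosing $D=\{a_1,a_2,x_1,x_2\}$ gives $N[D]=\{a_1,b_1,x_1,y_1,a_2,b_2,x_2,y_2\}$, and $T-N[D]$ still contains the path $y_1'x_1'x_2'y_2'$ with three edges, so this $D$ of the ``right'' size $\frac{n+\ell}{4}=4$ is not isolating. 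The choice of which $x$-vertex to take from each copy therefore matters; the paper handles this by rooting $T$ at a vertex of $A\cup X$ and always taking the $x$-endpoint nearer the root, which guarantees that the far endpoint of each $P_4$ has a neighbour in $A\cup X_0\subseteq N[D_0]$. Without such a rule (or at least a citation of Theorem~\ref{th:orderplusleaves} to get the upper bound, which would salvage the equality $\iota(T)=\frac{n+\ell}{4}$ but not the ``Moreover'' clause), your proof of the upper bound and of the explicit minimum isolating set does not go through.
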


\begin{proof} 
Let $T\in \mathcal {F}$. Since all support vertices of $T$ have degree $2$, we know that there is a minimum isolating set containing all the vertices in $A$ and no vertex from $B\cup C$. Assume that $D$ is a minimum isolating set such that $A\subseteq D$ and $D\cap (B\cup C)=\emptyset$. 
Since the set $N[A]$ contains no vertex from $Y$, $D$ must contain at least one vertex from each copy of $P_4$, otherwise, the two vertices in $Y$ from a copy of $P_4$ with no vertex in $D$ are not isolated. Hence, by Remark~\ref{propertiesF} $i)$, $\iota (T)\ge |A|+\frac{|X|}2=\frac {n+\ls}4$. By Theorem~\ref{th:orderplusleaves}, $\iota (T)=\frac {n+\ls}4$.

If $T\in \mathcal {F}$, a minimum isolating set can be constructed as follows. Choose a vertex $r\in A\cup X$ as the root of $T$. Let $X_0$ be the set of vertices containing the vertex in $X$ closer to the root $r$ from each copy of $P_4$. We claim that the set $D_0=A\cup X_0$ is an isolating set of $T$. Indeed, let $Y_0$ be the set of vertices containing the vertex in $Y$ closer to the root $r$ from each copy of $P_4$ and set $Y_1=Y\setminus Y_0$. Let $X_1=X\setminus X_0$.
Notice first that $A\cup B\cup X_0\cup Y_0\subseteq N[D_0]$. 
Now, if $v\in C$, then $v$ is isolated in $T-N[D_0]$, because its only neighbour is a vertex $z\in B\subseteq N[D_0]$.
If $v\in X_1$, then $v$ has at least one neighbour in $A\cup X_0$, implying that $v\in N[D_0]$.
Thus, $X_1\subseteq N[D_0]$.
If $v\in Y_1$, then all its neighbours belong to $Y_0\cup X_1\subseteq N[D_0]$. Hence, $v$ is isolated in $T-N[D_0]$.
Therefore, $D_0$ is an isolating set of $T$.
Moreover, $|D_0|=|A|+\frac{|X|}2=\frac{n+\ell}4$. Hence, $D_0$ is a minimum isolating set.
\end{proof}

Theorem~\ref{charactupperbound} will show that, apart from $K_2$, 
the only trees 
satisfying this upper bound 
are the trees in $\mathcal{F}$. 
We begin by proving some cases in which a tree does not attain the upper bound. 
\begin{Lem}\label{diam4} If $T$ is a tree of order $n\ge 3$ and diameter 2, 3 or 4, 
 then $\iota(T)=1< \frac{n+\ls}4$. 
 \end{Lem}
 \begin{proof}  
 Since the radius of $T$ is at most 2, any set formed by a central vertex of $T$ is isolating. Hence, $\iota (T)=1$. 
 Besides, $\frac{n+\ell}4\ge \frac{3+2}4>1$.
 \end{proof}
 
\begin{Lem}\label{nostrong}
If a tree $T$ of order $n\ge 3$ and leaf order $\ls$ has a strong support vertex, then  $\iota (T)<\frac{n+\ls}4$.
\end{Lem}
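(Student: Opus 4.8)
The plan is to prove the contrapositive-style strengthening directly: assuming $T$ has a strong support vertex, I will exhibit an isolating set whose size is strictly smaller than $\frac{n+\ell}{4}$. Let $v$ be a strong support vertex, so $v$ has at least two leaf-neighbors, say $z_1, z_2 \in L(T)$. The key idea is that attaching two or more leaves to a single vertex is ``wasteful'' relative to the extremal family $\mathcal{F}$ of Theorem~\ref{th:orderplusleaves}, where every support vertex has degree $2$ and carries exactly one leaf (Remark~\ref{propertiesF}b). Intuitively, a single vertex $v$ can isolate all of its leaves at the cost of placing just $v$ (or a neighbor) into the isolating set, whereas the bound $\frac{n+\ell}{4}$ ``charges'' roughly one quarter per vertex plus one quarter per leaf; multiple leaves at one support vertex inflate $\ell$ without a commensurate increase in the isolation cost.

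First I would handle the small/degenerate cases where the induction-style argument of Theorem~\ref{th:orderplusleaves} does not bite, namely trees of small diameter. By Lemma~\ref{diam4}, if $\diam(T) \le 3$ and $T \not\simeq K_2$, then $\iota(T) = 1 < \frac{n+\ell}{4}$ already, so the conclusion holds; and a tree with a strong support vertex has $n \ge 3$ and cannot be $K_2$. Thus I may assume $\diam(T) \ge 4$. For the main case, I would remove from $T$ the strong support vertex $v$ together with its leaf-neighbors $z_1,\dots,z_t$ (where $t = |N(v)\cap L(T)| \ge 2$), obtaining a smaller tree (or forest) $T'$, apply Theorem~\ref{th:orderplusleaves} to $T'$, and then reassemble an isolating set of $T$ by adding $v$ back. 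The arithmetic I expect to carry the proof is that deleting $v$ and its $t \ge 2$ leaves decreases $n$ by $t+1$ and decreases $\ell$ by at least $t-1$ (the $t$ leaves vanish, and possibly one new leaf is created at the former neighbor of $v$), so that $\frac{n'+\ell'}{4} \le \frac{(n-(t+1))+(\ell-(t-1))}{4} = \frac{n+\ell}{4} - \frac{2t}{4} \le \frac{n+\ell}{4} - 1$, which yields $\iota(T) \le \iota(T') + 1 < \frac{n+\ell}{4}$ with strict inequality coming from the slack of at least $1$.

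The technical care needed is in defining $T'$ so that it is a legitimate tree on at least one vertex to which the theorem applies, and in verifying that $\{v\} \cup D'$ isolates $T$ whenever $D'$ isolates $T'$: removing $N[v]$ from $T$ deletes $v$ and all $z_j$ and disconnects nothing essential beyond that, and the remaining structure is exactly $T'$ minus the neighbors of $v$, so an isolating set $D'$ of $T'$ extends correctly. I would also track the leaf count carefully, since the unique non-leaf neighbor $w$ of $v$ may or may not become a leaf in $T'$ depending on its degree — this is where the bound $\ell' \le \ell - (t-1)$ must be justified rather than asserted.

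The main obstacle I anticipate is the boundary bookkeeping when $v$ has non-leaf neighbors or when $T'$ degenerates to a very small tree: I must ensure the inductive/known bound of Theorem~\ref{th:orderplusleaves} still applies (it covers all $n \ge 1$, so this is safe) and that the leaf-order accounting never loses more than it should, so that the crucial gain of $\frac{2t}{4} \ge 1$ survives. Getting the strict inequality cleanly — rather than merely $\le$ — hinges on the fact that $t \ge 2$ guarantees a strict drop, and I would make sure no compensating increase in leaves can erase it.
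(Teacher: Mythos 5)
There is a genuine gap, and it is exactly at the point you flag as the crux. Your decomposition deletes the strong support vertex $v$ together with all $t\ge 2$ of its leaf-neighbours and pays $+1$ to put $v$ back into the isolating set. With your own accounting, $n'=n-(t+1)$ and $\ell'\le \ell-(t-1)$, so $\frac{n'+\ell'}{4}\le \frac{n+\ell}{4}-\frac{t}{2}$. For $t=2$ this drop is exactly $1$, which is exactly cancelled by the $+1$, giving only $\iota(T)\le \iota(T')+1\le \frac{n+\ell}{4}$ --- a non-strict inequality. Your claim that ``$t\ge 2$ guarantees a strict drop'' is false in the most important case $t=2$ (e.g.\ $v$ with two leaves attached to a path: $T'$ is a $K_2$ attaining $\iota(T')=\frac{n'+\ell'}{4}$, and your chain of inequalities is tight throughout). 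Two further, smaller issues: $\ell'\le \ell-(t-1)$ presumes $v$ has a unique non-leaf neighbour, but $v$ may have several, each of which can become a new leaf of $T'$ (and $T'$ may be a forest, so Theorem~\ref{th:orderplusleaves} must be applied componentwise); these only worsen the arithmetic.

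The paper's proof avoids paying the $+1$ altogether, and that is the idea you are missing. It deletes just \emph{one} leaf $x$ of the strong support vertex $u$, so that $u$ still has a leaf $y$ in $T'=T-x$; hence $n'=n-1$ and $\ell'=\ell-1$, and $\frac{n'+\ell'}{4}=\frac{n+\ell}{4}-\frac12$. By Lemma~\ref{lem:isolatingnoleaves}, $T'$ has a minimum isolating set $D$ with no leaves; since the edge $uy$ of $T'$ forces $u\in N[D]$, the set $D$ is already an isolating set of $T$ with no extra vertex added. Thus $\iota(T)\le \iota(T')\le \frac{n+\ell}{4}-\frac12<\frac{n+\ell}{4}$, and strictness comes for free. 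If you want to salvage your approach, you would have to either keep one leaf on $v$ (which essentially reduces to the paper's argument) or show that $v$ can be added ``for free'' to an isolating set of $T'$, neither of which your current bookkeeping provides.
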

\begin{proof}
Suppose  $u$ is a strong support vertex of $T$ and let $x$ and $y$ be two leaves adjacent to $u$. Let $D$ be a minimum isolating set of $T'=T-\{x\}$ with no leaves, that exists by Lemma~{\ref{lem:isolatingnoleaves}}.
Obviously, $D$ is also an isolating set of $T$.
Since $T'$ has order $n-1$ and $\ls-1$ leaves, by Theorem~\ref{th:orderplusleaves}

$$\iota (T)\le |D|=\iota (T')\le \frac{n-1+\ls-1}4<\frac{n+\ls}4.$$
\end{proof}

\begin{Lem}\label{pending_p4}
Let $T$ be a tree of order $n\ge 5$ and leaf order $\ls$. If $T$ can be obtained by adding an edge between a non-leaf vertex $v$ of a path of order 4 and a vertex of a tree $T'$ of order $n-4$, then $\iota (T)<\frac{n+\ls}4$.
\end{Lem}
\begin{proof}
If $n=5$, then $\{v\}$ is an isolating set of $T$ and $\frac{n+\ell}4\ge \frac{5+2}4>1=\iota (T)$.
If $n\ge 6$, then $T'$ has order $n-4\ge 2$ and at most $\ls-1$ leaves.
Let $D'$ be a minimum isolating set of $T'$. Then $D'\cup \{ v \}$ is an isolating set of $T$ and, by Theorem~\ref{th:orderplusleaves}

 $$ \iota (T)\le \iota (T')+1\le \frac{(n-4)+(\ls -1)}4+1<\up.$$
 \end{proof}

\begin{Lem}\label{pending_p5}
Let $T$ be a tree of order $n\ge 6$ and leaf order $\ls$. If $T$ can be obtained by adding an edge between a support vertex of a path $P$ of order 5 and a vertex of a tree $T'$ of order $n-5$, then $\iota (T)<\frac{n+\ls}4$.
\end{Lem}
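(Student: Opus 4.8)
The plan is to follow the template of Lemma~\ref{pending_p4}: pick one vertex of the pendant $P_5$ that both isolates its two leaves and cuts the path off from $T'$, adjoin it to a minimum isolating set of $T'$, and then apply Theorem~\ref{th:orderplusleaves}. Write the path of order $5$ as $v_1v_2v_3v_4v_5$; its support vertices are $v_2$ and $v_4$, so by symmetry I may assume that the new edge joins $v_2$ to a vertex $w$ of $T'$. Thus, in $T$, the vertices $v_1$ and $v_5$ are leaves, $\deg_T(v_3)=\deg_T(v_4)=2$, and $N_T(v_2)=\{v_1,v_3,w\}$.

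First I would bound the leaves of $T'$. As no edge is added to any vertex of $T'$ other than $w$, every leaf of $T'$ different from $w$ is still a leaf of $T$; together with the two leaves $v_1,v_5\notin V(T')$ this yields $\ls\ge(\ls(T')-1)+2=\ls(T')+1$, i.e.\ $\ls(T')\le\ls-1$. I would remark that this holds uniformly, including the degenerate case $T'\simeq K_1$ (where $\ls(T')=0$), and that $T'$ has order $n-5\ge1$, so Theorem~\ref{th:orderplusleaves} applies to it.

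Next I would verify the isolating set. Taking a minimum isolating set $D'$ of $T'$ and setting $D=D'\cup\{v_3\}$, the identity $N_T[v_3]=\{v_2,v_3,v_4\}$ shows that deleting $N_T[D]$ leaves $v_1$ and $v_5$ isolated and removes $v_2$, hence kills the edge $v_2w$; the remaining part of $T-N_T[D]$ inside $V(T')$ is then a subgraph of the edgeless graph $T'-N_{T'}[D']$. So $T-N_T[D]$ has no edge, $D$ is isolating, and $|D|=\iota(T')+1$.

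Combining these with Theorem~\ref{th:orderplusleaves} would give
\[
\iota(T)\le\iota(T')+1\le\frac{(n-5)+\ls(T')}{4}+1\le\frac{(n-5)+(\ls-1)}{4}+1=\frac{n+\ls-2}{4}<\frac{n+\ls}{4}.
\]
The arithmetic here is routine; the only points needing care are selecting the central vertex $v_3$ (picking $v_2$ or $v_4$ would respectively leave the edge $v_4v_5$ or $v_1v_2$ alive) and checking the leaf inequality $\ls(T')\le\ls-1$, especially in the $T'\simeq K_1$ case. I expect this leaf-counting bookkeeping to be the only mild obstacle.
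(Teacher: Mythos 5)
Your proof is correct and follows essentially the same route as the paper: both single out the central vertex $v_3$ of the pendant $P_5$ (the unique vertex of the path that is neither a leaf nor a support vertex), adjoin it to a minimum isolating set of $T'$, bound $\ell(T')\le \ell-1$, and apply Theorem~\ref{th:orderplusleaves}. The only cosmetic difference is that the paper splits off $n=6$ (where $T'\simeq K_1$) as a separate base case, whereas you absorb it into the general estimate by observing that $\ell(K_1)=0\le\ell-1$; both treatments are valid.
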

\begin{proof}
Suppose that $v$ is the only vertex of $P$ that is neither a leaf nor a support vertex. If $n=6$, then $\{v\}$ is an isolating set of $T$ and $\frac{n+\ell}4\ge \frac{6+2}4>1=\iota (T)$. Suppose $n\ge 7$. Then, $T'$ is of order at least 2 and has at most $\ls -1$ leaves. Let
$D'$ be a minimum isolating set of $T'$. Then, $D'\cup \{ v \}$ is an isolating set of $T$. Since $T'$ is of order $n-5\ge 2$ and has at most $\ls-1$ leaves, Theorem~\ref{th:orderplusleaves} yields

 $$ \iota (T)\le \iota (T')+1\le \frac{(n-5)+(\ls -1)}4+1<\up.$$
 \end{proof}
 
\begin{Thm}\label{charactupperbound}
If a tree $T$ of order $n\ge 6$ and leaf order $\ls$ satisfies $\iota (T)=\frac {n+\ls}4$, then $T\in \mathcal{F}$.
\end{Thm}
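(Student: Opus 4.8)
The plan is to prove the converse of Proposition~\ref{prop:isolationtreeF}: assuming $\iota(T) = \frac{n+\ell}{4}$ with $n \geq 6$, I want to show $T$ decomposes into $r \geq 2$ copies of $P_3$ and $s \geq 0$ copies of $P_4$ glued along the vertices in $A \cup X$, i.e.\ that $T \in \mathcal{F}$. The natural strategy is induction on $n$, mirroring the inductive structure of the proof of Theorem~\ref{th:orderplusleaves}, and exploiting the fact that equality in that theorem forces equality in each inductive step. By Lemmas~\ref{diam4}, \ref{nostrong}, \ref{pending_p4} and~\ref{pending_p5}, a tree attaining the bound must have $\diam(T) \geq 4$, no strong support vertex (so every support vertex carries exactly one leaf), and no pendant $P_4$ attached through a degree-$2$ vertex and no pendant $P_5$ attached through its central vertex. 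These restrictions are precisely designed to pin down the local structure near a diametral path.

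First I would fix a diametral path $u_0, u_1, \dots, u_d$ with $d = \diam(T) \geq 4$ and analyze the neighborhood of the far end. Since $u_0$ is a leaf and there is no strong support vertex, $u_1$ is a support vertex of degree $2$; since there is no pendant $P_4$ of the forbidden type, the subtree hanging below $u_2$ should be forced to be exactly a single $P_3$ (the path $u_0 u_1 u_2$ with $u_2$ playing the role of an $a$-vertex), or else $u_2$ together with its descendants forms a pendant $P_4$ which Lemma~\ref{pending_p4} excludes. This is where the bulk of the careful casework lives: I must rule out every configuration below $u_3$ that is not either a clean pendant $P_3$ or a pendant $P_4$ unit. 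The forbidden-subgraph lemmas are the tool for this, and I expect to re-derive the degree constraints $\deg(u_3) \in \{2,3\}$ exactly as in the proof of Theorem~\ref{th:orderplusleaves}, because those were the two branches that achieved equality.

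The heart of the argument is then to run the two equality branches. In the $\deg(u_3) \geq 3$ branch of Theorem~\ref{th:orderplusleaves}, equality $\iota(T) = \frac{n+\ell}{4}$ forces $n' = n-3$, $\ell' = \ell - 1$, and $\iota(T') = \frac{n'+\ell'}{4}$, so the removed piece is exactly a $P_3$ (contributing $3$ to the order and a net $1$ to the leaf count), and $T' = T_2'$ itself attains the bound; by induction $T' \in \mathcal{F}$, and reattaching the $P_3$ at the $a$-vertex keeps us in $\mathcal{F}$. In the $\deg(u_3) = 2$ branch, equality forces $n' = n-4$ and $\ell' = \ell$, so the removed piece $u_0 u_1 u_2 u_3$ is a $P_4$ unit attached to $T'$ through an $x$-vertex, and again $T' \in \mathcal{F}$ by induction, so reattaching the $P_4$ along an $X$-gluing edge stays in $\mathcal{F}$. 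I would need to verify the base of the induction carefully (small orders where $T'$ might fail to have $n' \geq 6$ or might be $K_2$), using the characterization in Lemma~\ref{diam4} that $K_2$ is the only small exceptional attainer.

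The main obstacle I anticipate is the bookkeeping at the reattachment step: I must confirm that gluing the pendant $P_3$ or $P_4$ back onto a tree $T' \in \mathcal{F}$ produces a tree still in $\mathcal{F}$ and not merely one attaining the bound. Concretely, the definition of $\mathcal{F}$ only permits new edges with endpoints in $A \cup X$ and insists that no $A \cup X$ vertex remain a leaf; so when $T'$ is reconstructed as a member of $\mathcal{F}$, the vertex of $T'$ to which the new unit attaches must itself be an $A$- or $X$-vertex of $T'$, and the attachment must not create a new leaf or violate the $P_4$-adjacency rule in Remark~\ref{propertiesF}(d). Ensuring $r \geq 2$ (at least two $P_3$ copies) in the final tree, rather than a degenerate single-$P_3$ configuration, also requires checking that $\diam(T) \geq 4$ guarantees at least two $a$-type ends. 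I would handle these by tracking, through the induction, which vertices of $T'$ are eligible attachment points and by appealing to the forbidden-subgraph lemmas whenever a proposed gluing would create a forbidden pendant $P_4$ or $P_5$.
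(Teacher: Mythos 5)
Your overall strategy coincides with the paper's: induction on $n$, the four exclusion lemmas to control the structure near the end of a diametral path, and propagation of equality through the two branches $\deg(u_3)\ge 3$ and $\deg(u_3)=2$. The first branch is essentially right: equality forces $T'=T-\{u_0,u_1,u_2\}$ to attain the bound, and Lemma~\ref{pending_p5} together with Remark~\ref{propertiesF}(e) rules out $u_3\in B\cup Y$ (and the degree rules out $u_3\in C$), so $u_3\in A\cup X$ and the new $P_3$ glues on legally. You should also note that the possibility $\deg(u_2)\ge 3$ must be eliminated first (Lemma~\ref{pending_p4} alone does not do it; one needs the extra counting contradiction with $n'\le n-5$, $\ell'\le\ell-1$), but that is a routine addition.

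The genuine gap is in your $\deg(u_3)=2$ branch. You propose to view the removed piece $u_0u_1u_2u_3$ as the $P_4$ unit, glued to $T'\in\mathcal{F}$ through $u_4$. This cannot work, for two reasons you yourself would run into when ``tracking eligible attachment points'': first, $u_0$ is a leaf of $T$, so it cannot be an $x$-vertex (the definition of $\mathcal{F}$ requires that no vertex of $A\cup X$ remain a leaf, and the leaves of $T$ are exactly $C$); second, equality forces $\deg_T(u_4)=2$, so $u_4$ is a leaf of $T'$ and hence a $c$-vertex of some $P_3$ copy $P_3^0$ in the decomposition of $T'$, which is \emph{not} an eligible endpoint for a gluing edge. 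No choice of root or relabelling fixes this within your proposed unit decomposition; one must \emph{re-decompose}. The paper's resolution is to take $\{u_0,u_1,u_2\}$ as a new $P_3$ copy (with $u_2\in A$) and to merge $u_3$ with $u_4$ and the $b$- and $a$-vertices $v,w$ of $P_3^0$ into a new $P_4$ copy $\{u_3,u_4,v,w\}$ (with $u_3$ and $w$ as its $x$-vertices); the gluing edge becomes $u_2u_3$, with both endpoints in $A\cup X$, and the edges formerly incident to $w$ in $T'$ remain legal. This re-decomposition is the missing idea; without it your reattachment step in this branch dead-ends.
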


\begin{proof} 
The proof is by induction on $n$.
Let $T$ be a tree of order $n\ge 6$ such that $\iota (T)=\frac {n+\ls}4$. Let $d$ be the diameter of $T$.
It can be checked that the only tree of order 6 attaining the upper bound is $P_6$ and it is in $\mathcal{F}$, because it can be obtained from two copies of $P_3$ by adding an edge between the vertices in $A$. Hence, the statement is true for $n=6$.

Now suppose that $T$ is a tree of order $n$, $n\ge 7$, with $\ls $ leaves such that $\iota (T)=\frac{n+\ls}4$. By Lemma~\ref{diam4}, the diameter of $T$ is at least 4. Let $P=(u_0,u_1,\dots ,u_d)$ be a diametral path of $T$. By the choice of $P$ and by Lemma~\ref{nostrong}, $\deg_T (u_0)=1$,  $\deg (u_1)=2$ and the set $N(u_2)\setminus \{u_1,u_3\}$ contains at most one leaf and  $k$ vertices of degree $2$ that are support vertices, for some $k\ge 0$.
By Lemma~\ref{pending_p4}, it is not possible to have $k=0$ and a leaf adjacent to $u_2$. Hence, either $\deg (u_2)=2$ or 
$\deg (u_2)\ge 3$ with $k\ge 1$.  

If $\deg (u_2)\ge 3$ with $k\ge 1$, let $T'_2$ be the connected component of the $T-u_2u_3$ containing the vertex $u_3$. If $D'$ is a minimum isolating set of $T'_2$, then $D=D'\cup \{u_2\}$ is an isolating set of $T$. If $T'_2$ has order $n'$ and $\ls'$ leaves, then  $n'\le n-5$ and $\ls '\le \ls -1$. Thus, 

{\small
$$\up =\iota (T)\le |D'|+1= \iota (T'_2)+1\le \frac {n'+\ls'}4+1\le \frac{(n-5)+(\ls -1)}4+1<\up,$$}
a contradiction. Therefore, $\deg (u_2)=2$. 
Next, we distinguish cases according to the degree of $u_3$. 

{\bf Case 1:} $\deg (u_3)=2$. Notice that $\deg (u_4)\ge 2$, because $n\ge 7$.   
Let $T'_3=T-\{u_0,u_1,u_2,u_3\}$ and let $D'$ be a minimum isolating set of $T'_3$. Then, $D'\cup \{u_2\}$ is an isolating set of $T$. 
If $\deg (u_4)\ge 3$, 
 then $T'_3$ is a tree of order $n-4$ with $\ls -1$ leaves.
Thus,

$$\up =\iota (T)\le |D|=|D'|+1= \iota (T'_3)+1\le \frac {(n-4)+(\ls -1)}4+1<\up,$$
a contradiction.
Hence,  $\deg (u_4)=2$
and $T'_3$ is a tree 
with $\ls$ leaves. Moreover,
     
$$\up =\iota (T)\le |D|=|D'|+1= \iota (T'_3)+1\le \frac {(n-4)+\ls}4+1=\up,$$

implying that all inequalities in the preceding expression must be equalities. Hence, 

$$\iota (T'_3)=\frac {(n-4)+\ls}4$$    
and, by the induction hypothesis, $T'_3\in \mathcal{F}$. Consider the construction of $T'_3$ from copies of $P_3$ and $P_4$ used to define the trees of $\mathcal{F}$. Since $u_4$ is a leaf in $T'_3$, $u_4$ is the vertex in $C$ of a copy $P_3^0$ of $P_3$. Let $v$ be the vertex in $B$ and $w$ the vertex in $A$ of $P_3^0$. 
    Let $P_3^1$ be a copy of the path $P_3$ induced by $\{u_0,u_1,u_2\}$ and let $P_4^1$ be a the copy of the path $P_4$ induced by $\{u_3, u_4, v, w\}$. 
     The tree $T$ can be obtained from $P_3^1$, $P_4^1$ and the copies of $P_3$ and $P_4$ different from $P_3^0$ used to construct $T'_3$, so that the edges joining the copies have both endpoints in $A\cup X$ and no vertex in $A\cup X$ remains a leaf (see Figure~\ref{case1}).
     Hence, $T\in \mathcal{F}$.
    
\begin{figure}[h]
\centering
 \includegraphics[width=0.9\textwidth]{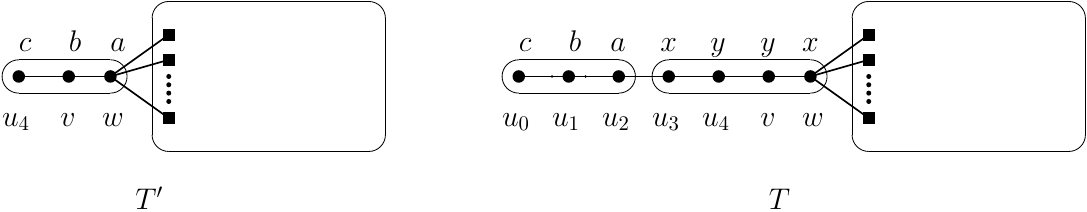}
\caption{The tree $T$ is in $\mathcal{F}$ if $T'\in \mathcal{F}$. Squared vertices belong to $A\cup X$.}
\label{case1}
\end{figure}

{\bf Case 2:} $\deg (u_3)\ge 3$.
Let $T'=T-\{ u_0,u_1,u_2\}$. Hence, $T'$ is a tree of order $n-3$ with $\ls-1$ leaves.
If $D'$ is a minimum isolating set of $T'$, then $D=D'\cup \{ u_1 \}$ is an isolating set of $T$. 
 Thus,
    
$$\up =\iota (T)\le |D|=|D'|+1=\iota (T')+1\le \frac {(n-3)+(\ls-1)}4+1=\up$$
implying that all inequalities in the preceding expression must be equalities. Hence, 

$$\iota (T')=\frac {(n-3)+(\ls-1)}4,$$    
and, by the induction hypothesis, $T'\in \mathcal{F}$.
Consider the construction of $T'$ from copies of $P_3$ and $P_4$ used to define the trees of $\mathcal{F}$. 
Notice that $u_3\notin C$, because $\deg_{T'}(u_3)\ge 2$.
Besides, $u_3\notin B$, by Lemma~\ref{pending_p5}.
Let $P_3'$ be the path of $T$ induced by $\{u_0,u_1,u_2\}$ and label the vertex $u_2$ with $a$.
Since $\deg (u_3)\ge 3$, from Remark~\ref{propertiesF} $e)$,
we know that $u_3\notin Y$.
If $u_3\in A\cup X$, then $T\in \mathcal {F}$, because $T$ can be obtained by adding to $T'$ the copy $P_3'$ and the edge $u_2u_3$ with endpoints in $A\cup X$.
\end{proof}

Although the bound given in Theorem~\ref{th:orderplusleaves} is sharp, the difference between this bound and the isolation number of a tree can be arbitrarily large.

\begin{Prop}\label{large}
For every integer $k\ge 1$, there is a tree $T$ such that $\frac{n+\ell}{4} - \iota(T) =k$, where $n$ and $\ell$ are the order and the leaf order of $T$, respectively.
\end{Prop}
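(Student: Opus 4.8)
The plan is to construct, for each target integer $k\ge 1$, an explicit tree $T$ whose order $n$, leaf order $\ell$, and isolation number $\iota(T)$ satisfy $\frac{n+\ell}{4}-\iota(T)=k$. Since Theorem~\ref{th:orderplusleaves} guarantees $\iota(T)\le\frac{n+\ell}{4}$ always, the task is to engineer a family in which the gap grows linearly with a construction parameter. The natural strategy is to find a ``gadget'' that contributes a fixed positive increment to $\frac{n+\ell}{4}-\iota(T)$ each time it is attached, and then attach $k$ copies (or one copy of size proportional to $k$).

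First I would look for a subtree on which the bound is loose by a controllable amount. A long path $P_m$ is the simplest candidate: it has $\ell=2$, so $\frac{n+\ell}{4}=\frac{m+2}{4}$, while $\iota(P_m)=\lceil m/5\rceil$ roughly (each isolating vertex together with its two neighbours and a buffer covers about five consecutive vertices before an edge can reappear). For a path the gap $\frac{m+2}{4}-\lceil m/5\rceil$ already tends to infinity, which suggests that a single path of suitable length realizes any desired $k$, provided I can hit the value exactly. Concretely, I would compute $\iota(P_m)$ precisely—the standard fact is $\iota(P_m)=\lceil m/5\rceil$ for $m\ge 1$ (a $5$-vertex block $P_5$ has isolation number $1$, and the pattern repeats)—and then solve $\frac{m+2}{4}-\lceil m/5\rceil=k$ for $m$ along an appropriate residue class modulo $20$ so that both $\frac{m+2}{4}$ and $\lceil m/5\rceil$ behave cleanly. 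Choosing $m\equiv 2\pmod 4$ keeps $\frac{m+2}{4}$ an integer, and restricting further to $m\equiv 2\pmod{20}$ makes $\lceil m/5\rceil=\frac{m+3}{5}$ uniform, after which the equation becomes linear in $m$ and solvable for each $k$.

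Alternatively, and perhaps more transparently for the write-up, I would build $T$ as a ``spider'' or caterpillar assembled from $k$ disjoint copies of $P_5$ (or of whatever gadget gives increment exactly $1$) linked together minimally so that the total order and leaf order are easy to track and the isolation number adds up exactly. Each $P_5$ contributes $5$ to $n$ and, when joined only through internal vertices, contributes little to $\ell$; the key identity to verify is that joining $k$ such gadgets into a single tree leaves $\iota(T)=k\cdot\iota(P_5)+(\text{correction})$ while $\frac{n+\ell}{4}$ exceeds this by exactly $k$. I would pin down the linking pattern (for instance attaching each new $P_5$ by an edge from one of its degree-$2$ vertices to a fixed root path, as in Lemmas~\ref{pending_p4} and \ref{pending_p5}) so that the isolation number is exactly additive and the arithmetic of $n$ and $\ell$ is immediate.

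The main obstacle I anticipate is controlling $\iota(T)$ exactly rather than merely bounding it: the upper bound from Theorem~\ref{th:orderplusleaves} is free, but I need a matching lower bound showing $\iota(T)$ is not smaller than claimed, so that the gap equals $k$ and not something larger. For the path this means establishing $\iota(P_m)=\lceil m/5\rceil$ exactly, for which the lower bound follows from the observation that any single vertex of an isolating set can destroy all edges within distance at most two of itself, so each chosen vertex ``covers'' at most five consecutive path-vertices before an uncovered edge survives; a clean counting or packing argument (exhibiting $\lceil m/5\rceil$ pairwise ``far apart'' edges that no single isolating vertex can simultaneously kill) gives the lower bound. Once the exact value of $\iota$ on the chosen family is secured, the remaining computation of $\frac{n+\ell}{4}-\iota(T)=k$ is a routine substitution, so I would keep the family as arithmetically simple as possible precisely to make that final verification a one-line calculation.
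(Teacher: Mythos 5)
There is a genuine gap: your primary construction rests on a false value for the isolation number of a path. In fact $\iota(P_m)=\lceil (m-1)/4\rceil$, not $\lceil m/5\rceil$. A chosen vertex $v_i$ covers only $N[v_i]=\{v_{i-1},v_i,v_{i+1}\}$, and between consecutive chosen vertices $v_i,v_j$ the vertices $v_{i+2},\dots,v_{j-2}$ survive, so avoiding a surviving edge forces $j-i\le 4$; your ``five consecutive vertices per chosen vertex'' heuristic double-counts the single buffer vertex shared by consecutive blocks. With the correct value, writing $m=4q+r$, the gap $\frac{m+2}{4}-\lceil\frac{m-1}{4}\rceil$ equals $\frac12,\frac34,0,\frac14$ for $r=0,1,2,3$ respectively: it is bounded by $\frac34$ and vanishes exactly when $m\equiv 2\pmod 4$ (consistent with $P_6\in\mathcal{F}$; paths are essentially extremal for Theorem~\ref{th:orderplusleaves}, the opposite of what you need). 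So no path realizes a gap of $k\ge 1$, and no residue-class trickery can repair this. Your fallback of linking copies of $P_5$ inherits the same misconception, since such trees remain leaf-poor and leaf-poor trees are precisely where the bound is tight or nearly so; for instance, $k'$ copies of $P_5$ attached by their centres to a common root give $n=5k'+1$, $\ell=2k'$, $\iota=k'$, hence gap $\frac{3k'+1}{4}$ --- each gadget contributes about $\frac34$ rather than $1$, the total is an integer only for $k'\equiv 1\pmod 4$, and you never exhibit or verify a gadget with increment exactly $1$.

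The idea you need is the opposite one: make the tree leaf-heavy, because each additional leaf attached to an existing support vertex raises $\frac{n+\ell}{4}$ by $\frac12$ while leaving $\iota$ unchanged. The paper simply attaches $2k-1$ extra leaves to a support vertex $b$ of a path $a\,b\,c\,d$ of order $4$: then $n=2k+3$, $\ell=2k+1$, and $\{b\}$ is an isolating set (only $d$ survives), so $\iota(T)=1$ and $\frac{n+\ell}{4}-\iota(T)=\frac{4k+4}{4}-1=k$. This is a one-line verification requiring no exact isolation numbers of long paths and no additivity argument.
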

\begin{proof} Let $k\geq 1$ be an integer.
Consider a tree $T$ obtained by attaching $2k-1$ leaves to a support vertex of a path of order $4$.
Then, $T$ is a tree of order $2k+3$ with $2k+2$ leaves and isolation number equal to 1. Therefore

$$ \frac{n+\ell}{4}-\iota(T)= \frac{(2k+3)+(2k+1)}{4}-1=k.$$
\end{proof}

We finish this section by proving an upper bound on the isolation number for trees in terms of order, leaves and support vertices.
The following bound is proved in \cite{boutrig}.

\begin{Prop}\cite{boutrig}\label{prop:boundboutrig}
If $T$ is a tree of order $n\ge 3$ and leaf order $\ls$, and $T$ has exactly $s$ support vertices, then $\iota(T)\le \frac{n-\ell+s}{3}$.
\end{Prop}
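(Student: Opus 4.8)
The plan is to deduce the bound from the isolation bound $\iota(H)\le |V(H)|/3$ of Theorem~\ref{thm:caro}, applied not to $T$ itself but to a carefully trimmed tree $H$ of order exactly $n-\ell+s$. First I would build $H$ from $T$ by deleting, at each support vertex, all of its pendant leaves but one; equivalently, $H$ is the subtree of $T$ spanned by the $n-\ell$ non-leaf vertices together with one chosen leaf per support vertex. Then $H$ is a connected subtree, its support-vertex set coincides with that of $T$ (call it $S$, with $|S|=s$), its leaf set is exactly the $s$ retained leaves, and $|V(H)| = (n-\ell)+s$.

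The heart of the argument is a lifting claim: every minimum isolating set $D$ of $H$ that contains no leaves (one exists by Lemma~\ref{lem:isolatingnoleaves}) is already an isolating set of $T$. To see this I would first argue $S\subseteq N_T[D]$: for each $v\in S$ with retained leaf $u_v$, the edge $vu_v$ must have an endpoint in $N_H[D]$, and since $D$ is leaf-free the only possibility is $v\in N_H[D]\subseteq N_T[D]$. Consequently every deleted leaf of $T$, being adjacent to some $v\in S\subseteq N_T[D]$, lies in $N_T[D]$ and is thus removed. Because $D$ is leaf-free one also checks that $N_T[D]\cap V(H)=N_H[D]$, so $T-N_T[D]$ has vertex set $V(H)\setminus N_H[D]$ and induces the same (edgeless) subgraph as $H-N_H[D]$. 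Hence $D$ isolates $T$, giving $\iota(T)\le \iota(H)$.

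Applying Theorem~\ref{thm:caro} to $H$ then yields $\iota(T)\le \iota(H)\le |V(H)|/3 = (n-\ell+s)/3$, as required. The attractive feature of this route is that it is entirely non-inductive, in contrast with the diametral-path inductions used elsewhere in the paper; the whole difficulty is compressed into the trimming construction and the lifting claim.

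The main obstacle is the degenerate case, which is also where one must be careful about the hypotheses: the reduction requires $H\not\simeq K_2$ (and $H$ is never $C_5$, being a tree). One has $H\simeq K_2$ precisely when $T$ is a star, i.e.\ $\diam(T)=2$, and indeed for a star $\iota(T)=1$ exceeds $(n-\ell+s)/3=2/3$, so the statement must be read for $\diam(T)\ge 3$. For such $T$ the tree $T-L(T)$ contains an edge, every support vertex retains a non-leaf neighbour (so no non-leaf vertex of $T$ becomes a leaf of $H$, confirming that the support-vertex and leaf sets of $H$ are as claimed), and therefore $H$ has order at least $3$ and is not $K_2$; both Lemma~\ref{lem:isolatingnoleaves} and Theorem~\ref{thm:caro} then apply, completing the proof.
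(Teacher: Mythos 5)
The paper does not actually prove this proposition—it is quoted from \cite{boutrig} without proof—so there is no internal argument to compare against; your derivation stands as a self-contained alternative. The route is sound: trimming $T$ to the induced subtree $H$ on the non-leaf vertices together with one retained leaf per support vertex gives $|V(H)|=n-\ell+s$, and whenever $\diam(T)\ge 3$ one has $n-\ell\ge 2$ and $s\ge 2$, so $H$ is a tree of order at least $4$, Theorem~\ref{thm:caro} applies, and your lifting claim transfers the bound back to $T$. Your observation about the degenerate case is correct and worth stressing: for $T\simeq P_3$ one has $\iota(T)=1>2/3=(n-\ell+s)/3$, so the hypothesis ``$n\ge 3$'' in the proposition as printed is insufficient and a restriction such as $\diam(T)\ge 3$ is genuinely required; your proof covers exactly those trees. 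One sentence of the lifting claim is literally false, though harmlessly so: a deleted leaf $x$ with support vertex $v$ need not lie in $N_T[D]$ (that would require $v\in D$, not merely $v\in N_T[D]$). What is actually needed is only that no edge of $T-N_T[D]$ survives, and this holds because the unique edge at $x$ has its other endpoint $v\in S\subseteq N_T[D]$; the leaf $x$ may persist as an isolated vertex, which an isolating set permits. Note also that leaf-freeness of $D$ is not really used: if the retained leaf $u_v$ lies in $N_H[D]$, then either $u_v\in D$ (so $v\in N_H[D]$) or its unique neighbour $v$ lies in $D$, so $S\subseteq N_H[D]$ for every isolating set of $H$. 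With that sentence repaired, the proof is complete.
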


Using an argument similar to that in \cite{boutrig}, we can get that the former bound is also true for general graphs.
Using this result, the bound given in Theorem~\ref{th:orderplusleaves} can be improved if the number of support vertices is known.

\begin{Thm}\label{thm:boundleavessupport}
If $T$ is a tree of order $n\ge 3$ and leaf order $\ls$, and $T$ has exactly $s \neq 1$ support vertices, then

 $$\iota(T) \leq \frac{n-\ell+ 2s}{4}.$$ 
 
\end{Thm}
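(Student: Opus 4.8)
The plan is to reduce the claim to Theorem~\ref{th:orderplusleaves} by pruning the superfluous leaves of $T$. One could alternatively argue by cases, since Proposition~\ref{prop:boundboutrig} already yields the bound whenever $n-\ell\le 2s$ (the inequality $\tfrac{n-\ell+s}{3}\le\tfrac{n-\ell+2s}{4}$ is equivalent to $n-\ell\le 2s$), leaving only the regime $n-\ell>2s$; but the single reduction below handles all cases at once. First I would record two consequences of the hypothesis $s\neq1$. Every tree of order at least $3$ has a leaf and hence a support vertex, so $s\ge 2$. Moreover, if some support vertex had only leaves as neighbours, then $T$ would be a star and $s=1$; thus every support vertex of $T$ has at least one non-leaf neighbour. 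Since support vertices are internal, $s\le n-\ell$, so $n-\ell\ge 2$.

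Next I would construct the auxiliary tree $\hat T$ obtained from $T$ by deleting, at each support vertex, all of its adjacent leaves except one. I claim $\hat T$ is a tree of order $n-\ell+s$ whose leaves are exactly the $s$ retained ones. Indeed, only leaves are removed, so $\hat T$ is a tree on the $n-\ell$ internal vertices of $T$ together with the $s$ retained leaves; and using that every support vertex keeps one leaf and has a non-leaf neighbour (hence degree at least $2$ in $\hat T$), while every non-support internal vertex keeps all of its internal neighbours, no internal vertex degenerates into a leaf of $\hat T$. Applying Theorem~\ref{th:orderplusleaves} to $\hat T$ then gives
\[
\iota(\hat T)\ \le\ \frac{(n-\ell+s)+s}{4}\ =\ \frac{n-\ell+2s}{4},
\]
which is exactly the target bound.

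It then remains to show $\iota(T)\le\iota(\hat T)$, and this is the step I expect to be the crux. I would take, via Lemma~\ref{lem:isolatingnoleaves} (applicable since $|V(\hat T)|=n-\ell+s\ge 2s\ge 4$), a minimum isolating set $D$ of $\hat T$ containing no leaves, and argue that $D$ already isolates $T$. As $D$ consists of internal vertices and the internal structures of $T$ and $\hat T$ coincide, $T-N_T[D]$ has no internal--internal edge, this being inherited from $\hat T-N_{\hat T}[D]$. The key observation is that for each support vertex $u$ with retained leaf $x_u$, we have $x_u\notin D$, so the absence of the edge $ux_u$ in $\hat T-N_{\hat T}[D]$ forces $u\in N_{\hat T}[D]$ and hence $u\in N_T[D]$; consequently every leaf of $T$, being adjacent only to such a $u$, is isolated in $T-N_T[D]$. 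Therefore $D$ isolates $T$ and $\iota(T)\le|D|=\iota(\hat T)$, finishing the argument. Finally, I would note that the hypothesis $s\neq1$ is essential precisely at the leaf-counting step: for a star the pruning produces $\hat T\simeq K_2$, whose leaf order is $2\neq s=1$, so the computation $\ell(\hat T)=s$ breaks down, matching the fact that the bound itself fails there.
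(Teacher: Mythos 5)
Your proposal is correct and follows essentially the same route as the paper: prune all but one leaf at each support vertex to obtain a tree of order $n-\ell+s$ with exactly $s$ leaves, then apply Theorem~\ref{th:orderplusleaves}. The only difference is that you carefully justify $\iota(T)\le\iota(\hat T)$ (and that no internal vertex degenerates into a leaf), where the paper simply asserts $\iota(T)=\iota(T^*)$; your added detail is a welcome tightening, not a different argument.
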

\begin{proof} 
Let $T^*$ be the tree obtained by deleting all but one of the leaves at each support vertex of $T$. Then, $T^*$ is a tree of order $n^*=n-\ell+s$ and leaf order $\ell^*=s$. Observe that $n^* \ge 3$. Since all graphs have minimum isolating sets without leaves, $\iota (T)=\iota (T^*)$. Hence, by Theorem~\ref{th:orderplusleaves},

$$\iota (T)=\iota (T^*)\le \frac{n^*+\ell^*}{4}=
\frac{(n-\ell+s)+s}{4}=
\frac{n-\ell+2s}{4}.$$
\end{proof}

Notice that the bound given in 
Theorem~\ref{thm:boundleavessupport} is better than the one given in Theorem~\ref{th:orderplusleaves}, since the number of support vertices is at most the number of leaves. Moreover, they are equal if and only if $T$ has no strong support vertex. Next, we characterize all trees achieving this last upper bound.
\medskip

\begin{Prop}
Let $T$ be a tree of order $n$ and leaf order $\ls$, and having exactly $s \ge 2$ support vertices. Then,
$\iota(T)=\frac{n-\ell+2s}4$ if and only if $T$ is obtained by adding twin leaves to a tree in $\mathcal{F}$ (see an example in Figure~\ref{fig:familytreesleavessupport}).
\end{Prop}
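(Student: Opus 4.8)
The plan is to reduce the statement to the already-proved characterization of equality in Theorem~\ref{th:orderplusleaves}, namely Theorem~\ref{charactupperbound}, using exactly the reduction employed in the proof of Theorem~\ref{thm:boundleavessupport}. Recall from there the tree $T^*$ obtained from $T$ by deleting, at every support vertex, all but one of its adjacent leaves. Then $T^*$ has order $n^*=n-\ell+s$ and leaf order $\ell^*=s$; its support vertices are exactly those of $T$, each now carrying a single leaf, so $T^*$ has no strong support vertex; and crucially $\iota(T)=\iota(T^*)$ because every graph has a minimum isolating set with no leaves. Substituting the counts, $\iota(T)=\frac{n-\ell+2s}{4}$ holds if and only if $\iota(T^*)=\frac{n^*+\ell^*}{4}$; that is, $T$ attains the bound of Theorem~\ref{thm:boundleavessupport} precisely when $T^*$ attains the bound of Theorem~\ref{th:orderplusleaves}.

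For the forward direction, assume $\iota(T)=\frac{n-\ell+2s}{4}$, so $\iota(T^*)=\frac{n^*+\ell^*}{4}$. I would first confirm $n^*\ge 6$, so that Theorem~\ref{charactupperbound} applies. If $\iota(T^*)=1$, then since this integer equals $\frac{n^*+\ell^*}{4}$, Lemma~\ref{diam4} forces $T^*\simeq K_2$ and hence $T\simeq K_2$, contradicting $n\ge 3$. Thus $\iota(T^*)\ge 2$, and as $T^*$ is a tree different from $K_2$ (and not $C_5$), Theorem~\ref{thm:caro} gives $n^*\ge 3\,\iota(T^*)\ge 6$. Theorem~\ref{charactupperbound} then yields $T^*\in\mathcal F$. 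Since the support vertices of $T^*$ are exactly the vertices of $B$, each carrying a single leaf in $T^*$, the tree $T$ is recovered from $T^*$ by re-attaching the previously deleted leaves, all of which hang from vertices of $B$; these are precisely twins of the leaves already present, so $T$ is obtained from $T^*\in\mathcal F$ by adding twin leaves.

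For the converse, suppose $T$ arises from some $T'\in\mathcal F$ by adding twin leaves, that is, by attaching extra leaves only at support vertices of $T'$ (the vertices of $B$, each of which already has one leaf by Remark~\ref{propertiesF}). Adding such leaves creates no new support vertices, so the support vertices of $T$ are exactly those of $T'$; by Remark~\ref{propertiesF} we have $L(T')=C$ and $|A|=|B|=|C|$, whence $s=|B|=\ell(T')\ge 2$. Reducing $T$ returns $T^*=T'$, so Proposition~\ref{prop:isolationtreeF} gives $\iota(T)=\iota(T')=\frac{n'+\ell'}{4}$, where $n',\ell'$ denote the order and leaf order of $T'$. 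Since each added twin contributes equally to $n$ and to $\ell$ we have $n-\ell=n'-\ell'$, and with $s=\ell'$ this turns into $\frac{n'+\ell'}{4}=\frac{(n-\ell)+2\ell'}{4}=\frac{n-\ell+2s}{4}$, as required.

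The step I expect to be the main obstacle is confirming that equality genuinely forces $n^*\ge 6$, so that Theorem~\ref{charactupperbound} (stated only for order at least $6$) can legitimately be invoked; without this, small reduced trees such as $P_4$ or $P_5$ would appear to need separate treatment. The combination of integrality of $\iota(T^*)$, Lemma~\ref{diam4}, and the bound $\iota(T^*)\le n^*/3$ from Theorem~\ref{thm:caro} is what disposes of them, leaving $K_2$ as the only genuine small exception, which is ruled out by $n\ge 3$. The remaining verifications — that the reduction introduces no strong support vertex, that twins are added only at vertices of $B$, and the order/leaf/support bookkeeping — are routine.
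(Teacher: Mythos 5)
Your proof is correct and follows essentially the same route as the paper: pass to the tree $T^*$ obtained by deleting all but one leaf at each support vertex, use $\iota(T)=\iota(T^*)$ to translate the hypothesis into $\iota(T^*)=\frac{n^*+\ell^*}{4}$, and then invoke Theorem~\ref{charactupperbound} for the forward direction and Proposition~\ref{prop:isolationtreeF} for the converse. Your justification that $n^*\ge 6$ (via Lemma~\ref{diam4} to rule out $\iota(T^*)=1$ and then $\iota(T^*)\le n^*/3$ from Theorem~\ref{thm:caro}) is actually more careful than the paper's, whose displayed inequality $n^*=n-\ell+2s\ge 6$ conflicts with its own earlier formula $n^*=n-\ell+s$.
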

\begin{proof}
Let $T^*$ be the tree obtained by deleting all but one of the leaves at each support vertex of $T$. Then, $\iota (T)=\iota (T^*)$.

\begin{figure}[ht!]
\centering
 \includegraphics[width=0.45\textwidth]{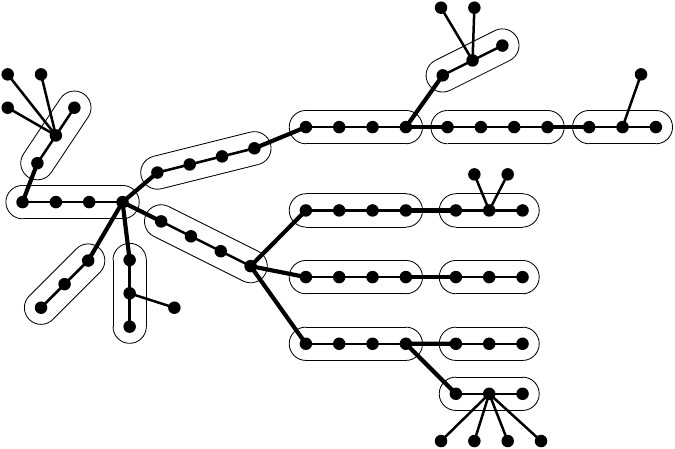}
\caption{A tree in $\mathcal{F}$ that attains the upper bound in Theorem~\ref{thm:boundleavessupport}.}
\label{fig:familytreesleavessupport}
\end{figure}

In addition, if $T$ has exactly $n$ vertices, $\ell$ leaves and $s$ support vertices, and $T^*$ has exactly $n^*$ vertices, $\ell^*$ leaves and $s^*$ support vertices,
then $n^*=n-\ell+s\geq 2+s\geq 4$ and $\ell^*=s^*=s \geq 2.$

Therefore, if $T^*\in \mathcal{F}$, then

$$\iota(T)=\iota (T^*)=\frac{n^*+\ell^*}4=\frac{(n-\ell+s)+s}{4}=\frac{n-\ell+2s}4.$$ Hence, $T$ attains the upper bound in Theorem~\ref{thm:boundleavessupport}.

Conversely, suppose that $T$ attains the upper bound in Theorem~\ref{thm:boundleavessupport}. Then,  $n^*= n-\ell+ 2s \geq 2+ 2s \geq 6 $  and

$$\iota(T^*)=\iota(T)=\frac{n-\ell+2s}4=\frac{n^*-s+2s}{4}=\frac{n^*+s}4=\frac{n^*+\ell^*}4.$$
Using Theorem \ref{charactupperbound}, we get  $T^*\in \mathcal{F}.$
Notice that $T$ is obtained by adding twin leaves to $T^*$, so we are done.
\end{proof}

\color{black}



\section{An upper bound on the $k$-isolation number of a tree}\label{sec:upperboundsstarisolation}

In this section, we establish a sharp upper bound on the $k$-isolation number of a tree $T$ in terms of $k$, the order $n$ and the number of leaves $\ell$, and we determine the extremal trees. Note that we have done this for $k = 1$ in the previous section. Surprisingly, the bound we present here, given by the following result, is best possible for any $k \geq 2$ but inferior to that in Theorem~\ref{th:orderplusleaves} for $k = 1$, meaning that the problem for $k \geq 2$ is somewhat different from that for $k = 1$.

\begin{Thm}\label{th:iotakupperbound}  If $k \ge 2$ and $T$ is a tree of order $n$ and leaf order $\ell$, then

$$\iota_k(T) \leq \frac{n + \ell}{2k+1}.$$
\end{Thm}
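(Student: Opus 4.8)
The plan is to prove the bound by induction on $n$, in the spirit of Theorem~\ref{th:orderplusleaves}, but with every reduction calibrated so that each vertex placed in the isolating set is ``paid for'' by a drop of at least $2k+1$ in the quantity $n+\ell$. Concretely, if a reduction step produces a tree $T'$ of order $n'$ and leaf order $\ell'$, then combining $\iota_k(T)\le \iota_k(T')+1$ with the induction hypothesis $\iota_k(T')\le \frac{n'+\ell'}{2k+1}$ shows that the step succeeds exactly when $(n-n')+(\ell-\ell')\ge 2k+1$. I would first dispose of the cheap cases: if $\Delta(T)<k$ then $T$ contains no $k$-star and $\iota_k(T)=0$; and if $\diam(T)\le 3$ then a single suitably chosen central vertex is $k$-isolating, so $\iota_k(T)\le 1\le \frac{n+\ell}{2k+1}$ once one notes that the existence of a $k$-star forces $n\ge k+1$. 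Hence I may assume $\diam(T)\ge 4$ and fix a diametral path $u_0u_1\cdots u_d$; as in Theorem~\ref{th:orderplusleaves}, $u_0$ is a leaf, every neighbour of $u_1$ other than $u_2$ is a leaf, and each child of $u_2$ (away from $u_3$) is either a leaf or a support vertex all of whose further neighbours are leaves. By Lemma~\ref{lem:isolatingnoleaves} I may also confine attention to isolating sets containing no leaves.

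The basic reduction I would use repeatedly is: choose a vertex $w\in\{u_1,u_2,u_3\}$, let $u$ be the neighbour of $w$ towards $u_d$, delete the component $S$ of $T-wu$ containing $w$, and set $D=D'\cup\{w\}$ for a minimum $k$-isolating set $D'$ of $T'=T-V(S)$. Because $u\in N[w]$, both endpoints of the only edge crossing the cut are removed, so the cut is clean: $N_T[D']\cap V(T')=N_{T'}[D']$, and $D$ is $k$-isolating as soon as $S-N[w]$ contains no $k$-star. The whole game is then to pick $w$ and $S$ so that $|V(S)|$ plus the number of leaves deleted, minus the single leaf possibly created at the cut, is at least $2k+1$. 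The easy—and extremal—situation is when some bottom support vertex $z$ (for instance $u_1$) carries at least $k$ leaves: then $z$ already centres a $k$-star, deleting $z$ with its $\ge k$ leaves subtracts at least $k$ from $n$ and at least $k$ from $\ell$, and a clean choice of $w$ yields a drop of at least $2k+1$ at once. This is precisely the $K_{1,k}$ gadget responsible for sharpness.

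The delicate regime, and the main obstacle, is when every support vertex near the bottom carries at most $k-1$ leaves, so that the deepest $k$-stars are not self-contained: a vertex such as $u_1$ of degree $k$ spends one of its $k$ star-edges on the path vertex $u_2$, and deleting the star at the bottom recovers only $2k$ (or less), not $2k+1$. To make up the deficit I would push $w$ as deep as the covering constraint allows—it must lie within distance $2$ of the deepest surviving $k$-star, which leaves $w\in\{u_2,u_3\}$ (taking $w=u_3$ kills the star at $u_1$ by deleting $u_2$)—and then extend $S$ as far up the diametral path as $N[w]$ permits, deleting $u_4$ as well when $w=u_3$ and $\deg(u_4)=2$, and so on, so that the thin segment above the bottom is absorbed into $S$ while the cut stays clean through the boundary of $N[w]$. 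The verification then splits on $\deg(u_3)$, $\deg(u_4)$ and the exact leaf counts. I expect $k=2$ to need separate treatment, since for $k=2$ every internal path vertex is itself a $2$-star centre, so thin segments cannot be swept up for free, whereas for $k\ge 3$ thin segments contain no $k$-star and can be absorbed into $S$—but only after neutralising the cross-edge degree jump at the cut by deleting up to the boundary of $N[w]$; and when the bottom carries no $k$-star at all (possible only for $k\ge 3$) the thin pendant piece can simply be peeled off at zero cost, which is admissible because it strictly decreases $n+\ell$.

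I would organise this bookkeeping by isolating the recurring sub-configurations—pendant brooms and pendant thin paths, the analogues of the $P_4$/$P_5$ reductions used for $k=1$ in Lemmas~\ref{pending_p4} and \ref{pending_p5}—as preliminary lemmas, and in each case exhibit a $w$ and an $S$ passing the arithmetic test $(n-n')+(\ell-\ell')\ge 2k+1$. The bulk of the work, and the only real difficulty, lies in this case analysis for the sparse bottom configurations, where clean-cut correctness and the $2k+1$ accounting must be reconciled simultaneously; once every case admits such a reduction, the induction closes and the bound $\iota_k(T)\le \frac{n+\ell}{2k+1}$ follows.
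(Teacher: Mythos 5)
Your overall strategy --- induction on $n$ along a diametral path, cutting off a bottom piece $S$ at a single edge, adding one vertex $w\in\{u_1,u_2,u_3\}$ to the isolating set, and checking the accounting $(n-n')+(\ell-\ell')\ge 2k+1$ --- is exactly the paper's, and your identification of the relevant splits ($\deg(u_1)$ relative to $k$, then $\deg(u_3)$ or $\deg(u_4)$ equal to $2$ or at least $3$) is on target. But the proposal stops short of a proof precisely where you say the only real difficulty lies: none of the reductions is actually verified, and several of the concrete claims you do make fail as stated. First, the base case: $n\ge k+1$ alone gives $n+\ell\ge k+3$, not $n+\ell\ge 2k+1$; you also need $\ell\ge\Delta\ge k$, which the paper extracts from the leaf formula (\ref{lformula}). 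Second, your ``easy'' case is miscounted: deleting a support vertex $z$ together with its $\ge k$ pendant leaves removes at least $k+1$ vertices and at least $k$ leaves but may create one new leaf at the cut, for a net drop of only $2k$; to reach $2k+1$ one must also absorb $u_2$ (and $u_3$ when $\deg(u_3)=2$) into $S$, i.e.\ this case needs the same two-way split on $\deg(u_3)$ as the delicate one. (Relatedly, sharpness does not come from this configuration: every support vertex of a tree in $\mathcal{T}_k$ has degree exactly $k$ and a non-leaf neighbour, so equality in the induction arises in your ``delicate'' regime $\deg(u_1)=k$, where the paper cuts at $u_3u_4$ or $u_4u_5$ according to $\deg(u_4)$.)

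The other genuine gap is the clean-cut condition when $w=u_3$. It is not enough that the children of $u_2$ carry at most $k-1$ leaves: $S-N[u_3]$ also contains every vertex at distance two from $u_3$ in the other branches below $u_3$, and such a vertex could a priori be a support vertex with $k$ pendant leaves, leaving a surviving $k$-star in the deleted piece. The paper neutralises all of these at once by choosing, among all diametral paths, one maximising $\deg(u_1)$: any such problematic vertex is the second vertex of another diametral path, hence has degree at most $\deg(u_1)=k$ and loses its parent to $N[u_3]$. Your proposal has no substitute for this device, and your informal dichotomy on ``support vertices near the bottom'' does not pin it down. Finally, your expectation that $k=2$ needs separate treatment is unfounded; the paper's five reductions are uniform in $k\ge 2$ (the case $k=2$ only becomes special later, in the characterisation of the extremal trees). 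With the maximising choice of diametral path, the $\ell\ge k$ estimate in the base case, and explicit verification of the five reductions, your plan does close --- but those items are the substance of the proof, not bookkeeping.
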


\begin{proof} 
The proof is by induction on $n$. Trivially, $\iota_k(T) = 0$ if $\Delta < k$. Suppose $\Delta \ge k$. Then, $n \geq \Delta + 1$. By (\ref{lformula}), $\ell \geq 2 + n_{\Delta}(\Delta-2) \geq 2 + 1(\Delta - 2) = \Delta$. Thus, $n + \ell \geq 2\Delta + 1 \geq 2k+1$.

If $n + \ell = 2k+1$, then $\Delta = k$, $n = k+1$ and $\ell = k$, so $T \simeq K_{1,k}$ and $\iota_k(T) = 1 = \frac{n + \ell}{2k+1}$.

Now suppose $n + \ell \ge 2k+2$. Let $d=\diam (T)$.
We choose a longest path $U = (u_0, u_1, \dots, u_d)$ of $T$ with the largest value of $\deg_T(u_1)$. If $d = 1$, then $\{u_0\}$ is an isolating set of $T$, so $\iota(T) = 1 < \frac{n + \ell}{2k+1}$. Suppose $d \geq 2$. For any $i \in [d-1]$, let $T_i$ and $T_i'$ be as in the proof of Theorem~\ref{th:orderplusleaves}. Recall that the distance (in $T$) of any vertex of $T_i$ from $u_i$ is at most $i$, and the distance (in $T$) of any vertex of $T_i'$ from $u_i$ is at most $d-i$. As in the proof of Theorem~\ref{th:orderplusleaves}, $u_0$ and any other vertex in $N_{T_1}(u_1)$ is a leaf of $T$. If $d \le 4$, then $\{u_2\}$ is an isolating set of $T$, so we have $\iota_k(T) \leq \iota(T) = 1 < \frac{n+\ell}{2k+1}$. Suppose $d \ge 5$. 

{\bf Case 1: $\deg_T(u_1) < k$.} Let $T' = T - u_0$. Then $T'$ is a tree of order $n' = n-1$ and leaf order $\ell' \leq \ell$. Let $D'$ be a minimum $k$-isolating set of $T'$. 
Then, $D'$ is a $k$-isolating set of $T$. By the induction hypothesis, we have

$$\iota_k(T) \le |D'| \le \frac{n' + \ell'}{2k+1} \le \frac{n - 1 + \ell}{2k+1} < \frac{n + \ell}{2k+1}.$$

{\bf Case 2: $\deg_T(u_1) = k$.} Since $d\ge 5$, $\deg_T(u_4)\ge 2$.

Suppose $\deg_T(u_4) \ge 3$. Let $T'= T_3'$, $n' = |V(T')|$ and $\ell' = \ell(T')$. Then, $n' \leq n - (k+2)$. Recall that $N_{T_1}(u_1) \subseteq L(T)$. Since $N_{T_1}(u_1) = N_T(u_1) \setminus \{u_2\}$, $\ell' \le \ell - (\deg_T(u_1) - 1) = \ell - (k-1)$. Let $D'$ be a minimum $k$-isolating set of $T'$. By the choice of $U$, $D' \cup \{u_3\}$ is a $k$-isolating set of $T$ (see Figure~\ref{fig:demostardk}, left). By the induction hypothesis, we have

$$\iota_k(T) \le |D'| + 1 \le \frac{n'+\ell'}{2k+1} +1
\le \frac{n-(k+2)+\ell-(k-1)}{2k+1}+1=\frac{n+\ell}{2k+1}.$$

\begin{figure}[h]
\centering
 \includegraphics[width=0.4\textwidth]{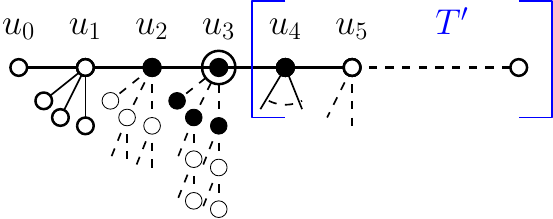}\hspace{8mm}
 \includegraphics[width=0.4\textwidth]{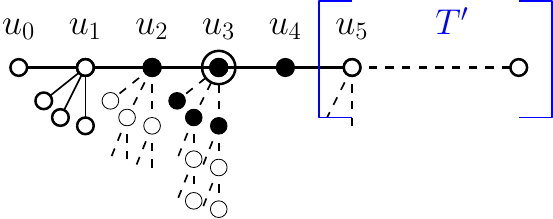}
\caption{In both cases, the set formed by $u_3$ and a $k$-isolating set of $T'$ is a $k$-isolating set of $T$.}
\label{fig:demostardk}
\end{figure}

Now suppose $\deg_T(u_4) = 2$. Let $T'= T_4'$, $n' = |V(T')|$ and $\ell' = \ell(T')$. Then, $n' \le n - (k+3)$ and $\ell' \le \ell - (k-2)$. Let $D'$ be a $k$-isolating set of $T'$.
By the choice of $U$, $D'\cup \{u_3\}$ is a $k$-isolating set of $T$ (see Figure~\ref{fig:demostardk}, right).
By the induction hypothesis, we have

$$\iota_k(T) \le |D'|+1\le \frac{n'+\ell'}{2k+1}+1
\le \frac{n-(k+3)+\ell-(k-2)}{2k+1}+1=\frac{n+\ell}{2k+1}.$$

%

{\bf Case 3: $\deg_T(u_1) > k$.} Suppose $\deg_T(u_3)\ge 3$.  Let $T'= T_2'$, $n' = |V(T')|$ and $\ell' = \ell(T')$. Then, $n' \le n-(k+2)$. Since we established that $N_T(u_1) \setminus \{u_2\} \subseteq L(T)$, $\ell' \le \ell - k$. Let $D'$ be a minimum $k$-isolating set of $T'$. Then, $D'\cup \{u_2 \}$ is a $k$-isolating set of $T$
(see Figure~\ref{fig:demostardmayork}, left).
By the induction hypothesis, we have

$$\iota_k(T)\le |D'| + 1 \le \frac{n'+\ell'}{2k+1} + 1 \le \frac{n-(k+2) + \ell - k}{2k+1} + 1 < \frac{n + \ell}{2k+1}.$$

\begin{figure}[h]
\centering
 \includegraphics[width=0.4\textwidth]{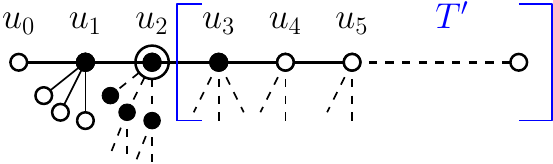}\hspace{8mm}
 \includegraphics[width=0.4\textwidth]{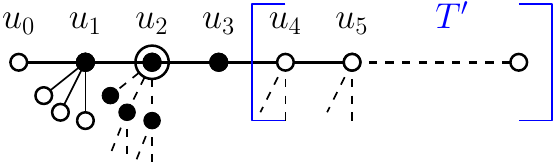}
\caption{In both cases, the set formed by $u_2$ and a $k$-isolating set of $T'$ is a $k$-isolating set of $T$.}
\label{fig:demostardmayork}
\end{figure}

Now suppose $\deg_T(u_3) = 2$. Let $T'= T_3'$, $n' = |V(T')|$ and $\ell' = \ell(T')$. Then, $n'\le n-(k+3)$ and $\ell'\le \ell - (k-1)$. Let $D'$ be a minimum $k$-isolating set of $T'$. Then, $D'\cup \{u_2 \}$ is a $k$-isolating set of $T$ (see Figure~\ref{fig:demostardmayork}, right). By the induction hypothesis, we have

$$\iota_k(T) \le |D'| + 1 \le \frac{n' + \ell'}{2k+1} + 1
\le \frac{n - (k+3) + \ell - (k-1)}{2k+1} + 1 < \frac{n+\ell}{2k+1}.$$

%
\end{proof}

We now have all the results needed for proving Theorem~\ref{thm:generaltreebound}.

\begin{proof}[Proof of Theorem~\ref{thm:generaltreebound}] Part (a) follows immediately from Theorems~\ref{thm:orderminusleaves_k} and \ref{th:orderplusleaves}. Part (b) follows immediately from Theorems~\ref{thm:orderminusleaves_k}, \ref{th:iotakupperbound} and \ref{thm:caro_trees}.
\end{proof}

The rest of the section is dedicated to characterizing the trees attaining the upper bound in Theorem~\ref{th:iotakupperbound}, initially treating the case $\iota_k(T) = 1$.

For any tree $T$, the number of vertices of $T$ of degree at least $k \geq 2$ will be denoted by $n_{\ge k}(T)$, or simply by $n_{\ge k}$ where no confusion arises.

\begin{Rem} \label{b1} Let $T$ be a tree with $\Delta(T) \geq 1$. Let $\ell = \ell(T)$ and $\Delta = \Delta(T)$. By (\ref{lformula}), for $2 \leq k \leq \Delta$,

$$\ell =2 + \sum_{i=3}^{\Delta} n_i (i-2) \ge 2+n_{\ge k} (k-2).$$
Since $ n+\ell \geq   (n_{\ge k}+\ell ) + \ell \geq 1 + 2 \ell \ge 1+2k$,

$$n+\ell\geq {2k+1}.$$
\end{Rem}
 
The following result establishes a sufficient condition on the order of a tree $T$ so that $\iota_k(T) = 1$. 

\begin{Cor}\label{2k+1}  Let $T$ be a tree of order $n \geq 3$ and $2\leq k \leq  \Delta (T)$. If $n \leq 2k+1$, then $\iota_k(T) = 1$. Moreover, $n + \ell = 2k+1$ if and only if $T \simeq K_{1,k}$.
\end{Cor}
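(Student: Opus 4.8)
The plan is to deduce Corollary~\ref{2k+1} almost entirely from results already established, treating it as a direct consequence of Theorem~\ref{th:iotakupperbound} together with Remark~\ref{b1}. First I would observe that since $n \geq 3$ and $2 \leq k \leq \Delta(T)$, the tree $T$ contains a vertex of degree at least $k$, hence a $k$-star, so $\iota_k(T) \geq 1$. This gives the lower bound on $\iota_k(T)$ for free.

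For the upper bound I would invoke Theorem~\ref{th:iotakupperbound}, which gives $\iota_k(T) \leq \frac{n+\ell}{2k+1}$. The key arithmetic step is to bound $n + \ell$ from above using the hypothesis $n \leq 2k+1$. Here I would use the leaf formula~(\ref{lformula}), or more directly the chain of inequalities already recorded in Remark~\ref{b1}, to control $\ell$ in terms of $n$. Since $\ell \leq n - 1$ for any tree of order $n \geq 2$ (a tree has at least one non-leaf when $n \geq 3$, as $T$ has a vertex of degree $\Delta \geq k \geq 2$), we get $n + \ell \leq n + (n-1) = 2n - 1 \leq 2(2k+1) - 1 = 4k+1$. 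This is not quite tight enough on its own, so instead I would argue more carefully: because $T$ has a vertex of degree $\Delta \geq k$, at least $k$ of the $n \leq 2k+1$ vertices are leaves adjacent to it, but the central vertex is not a leaf, so $\ell \leq n - n_{\geq k} \leq n - 1$, and combined with $n \leq 2k+1$ one checks $n + \ell \leq 2(2k+1) - 1 < 2(2k+1)$, forcing $\iota_k(T) \leq \frac{n+\ell}{2k+1} < 2$. Since $\iota_k(T)$ is a positive integer, $\iota_k(T) = 1$.

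For the second assertion, I would combine the two directions of an equivalence. Remark~\ref{b1} already establishes $n + \ell \geq 2k+1$ unconditionally, so the content of the characterization is the equality case. If $T \simeq K_{1,k}$, then $n = k+1$ and $\ell = k$, giving $n + \ell = 2k+1$ immediately. Conversely, if $n + \ell = 2k+1$, I would trace back through the inequalities in Remark~\ref{b1}: equality throughout forces $\ell = k$ and $n_{\geq k} = 1$ with the single high-degree vertex having degree exactly $k$ (via equality in $\ell \geq 2 + n_{\geq k}(k-2)$ and in $n + \ell \geq 1 + 2\ell$), which pins down $n = k+1$ and leaves $T \simeq K_{1,k}$ as the only possibility.

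The main obstacle, such as it is, will be pinning down the equality case cleanly rather than proving anything deep: I must make sure the chain of inequalities in Remark~\ref{b1} is reversible exactly when $T \simeq K_{1,k}$, checking that equality in each of the three steps (the leaf bound, $n_{\geq k} + \ell \geq 1 + \ell$, and $\ell \geq k$) simultaneously forces $n_{\geq k} = 1$, $\Delta = k$, and that all non-central vertices are leaves. Everything else is an immediate citation of Theorem~\ref{th:iotakupperbound} and the integrality of $\iota_k(T)$, so no genuinely new argument is required.
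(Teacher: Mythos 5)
Your proposal is correct, but it proves the first assertion by a genuinely different route from the paper. For ``$n \leq 2k+1 \Rightarrow \iota_k(T)=1$'' you combine Theorem~\ref{th:iotakupperbound} with the trivial bound $\ell \leq n - n_{\geq k} \leq n-1$ (valid since $k \geq 2$ forces every vertex of degree at least $k$ to be a non-leaf) to get $n+\ell \leq 2n-1 \leq 4k+1 < 2(2k+1)$, hence $\iota_k(T) < 2$, and integrality together with $\Delta \geq k$ (which guarantees a $k$-star, so $\iota_k(T) \geq 1$) finishes it. This is valid and there is no circularity, since the corollary follows Theorem~\ref{th:iotakupperbound} in the paper and that theorem's proof does not invoke the corollary. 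The paper instead gives a self-contained, constructive case analysis on $n_{\geq k} \in \{1, 2, \geq 3\}$: for $n_{\geq k}=2$ it shows the two high-degree vertices are at distance at most $2$ and exhibits a common closed neighbour as the isolating vertex, and for $n_{\geq k}\geq 3$ it forces $T \simeq P_5$ with $k=2$. Your argument is shorter and buys economy at the cost of leaning on the heavy induction behind Theorem~\ref{th:iotakupperbound} and of not exhibiting the isolating vertex; the paper's buys an explicit witness and independence from the main theorem. Your treatment of the equality case $n+\ell=2k+1$ is essentially identical to the paper's (equality throughout Remark~\ref{b1} forces $\ell=k$, $n=k+1$, $n_{\geq k}=1$, hence $T\simeq K_{1,k}$). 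One small inaccuracy to fix: your parenthetical claim that ``at least $k$ of the $n$ vertices are leaves adjacent to'' the maximum-degree vertex is false in general (consider $P_5$ with $k=2$); it is not needed, since the inequality you actually use, $\ell \leq n - n_{\geq k}$, is correct on its own.
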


\begin{proof} Since $2 \leq k \leq \Delta (T)$, $n_{\geq k} \geq 1$, so $\iota_k(T) \geq 1$. Assume $n \leq 2k+1$.

If $n_{\ge k} = 1$, then the set consisting of the unique vertex of degree at least $k$ is a $k$-isolating set, so $\iota_k(T) = 1$.

Suppose $n_{\ge k} = 2$. Let $u$ and $v$ be the vertices of degree at least $k$. If ${\rm dist}_T(u,v) \ge 3$, then $n \ge |N[u]| + |N[v]| \geq 2(k+1)$, which contradicts $n \leq 2k+1$. Thus, ${\rm dist}_T(u,v) \leq 2$, and hence $N[u] \cap N[v] \neq \emptyset$. Let $w \in N[u] \cap N[v]$. Then, $\{w\}$ is a $k$-isolating set of $T$, so $\iota_k(T) = 1$.

Now, suppose $n_{\ge k} \ge 3$. By Remark~\ref{b1}, $\ell \geq 2 + 3(k-2)$. We have $2k+1 \ge n \ge 3 + \ell \ge 3k-1$, so $k=2$ and $n=5$. Thus, $T \simeq P_5$, and hence $\iota_k(T) = 1$.

Finally, suppose $n + \ell = 2k+1$. Since $n + \ell \ge 1 + 2 \ell \ge 2k+1$, we have $\ell = k$ and $n = k+1$, so $T \simeq K_{1,k}$. Conversely, if $T \simeq K_{1,k}$, then $n + \ell = 1 + 2k$.
\end{proof}

\begin{ex}  We provide  some  examples   of   trees  with $\iota_k(T) \leq 1$. 

\begin{enumerate}
\item
 $\iota_k(T)=0$ if and only if   $n_{\ge k}(T)=0$ if and only if
$\Delta (T)<k.$ 
 \item
 If $T \simeq P_{n}$ with $n \geq 3,$ then $\Delta(T) = 2$, $n_2 = n - 2$ and $\ell = n_1 = 2$. For $3 \leq n \leq 7$, $\iota_2(P_{n}) = 1$. 
 \item
If $T \simeq K_{1,k}$ with $k\geq 2$, then $\Delta(T) = k$, $n_{\ge k}(T) = 1$ and $\iota_k(T) = 1$.
\item If $n_{\ge k}(T)=1,$ then the set consisting of the unique vertex of degree at least $k$ is a $k$-isolating set of $T$, so $\iota_k(T) = 1.$
  \item  
  If $diam (T)\leq 4$, then the set consisting of a central vertex is an isolating set of $T$, so $\iota_k(T) = 1$.
\end{enumerate}
\end{ex}

\begin{Cor}\label{th:sharpsmallorder} Let $T$  be a tree of order $n \geq 3$ and $2\leq k \leq \Delta (T)$. Then, $n \le 2k+3$ and $\iota_k(T)=\frac{n+\ell}{2k + 1}$ if and only if $T \simeq K_{1,k}$.   
\end{Cor}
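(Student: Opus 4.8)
The plan is to prove both implications, dispatching the forward direction by a one-line computation and reducing the converse to a short list of small trees that can be analyzed directly. For the forward direction, I would start from $T \simeq K_{1,k}$, observe that $n = k+1 \le 2k+3$ (since $k \ge 2$) and $\ell = k$, so $n + \ell = 2k+1$ and hence $\iota_k(T) = 1 = \frac{n+\ell}{2k+1}$ by Corollary~\ref{2k+1}.

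For the converse, suppose $n \le 2k+3$ and $\iota_k(T) = \frac{n+\ell}{2k+1}$, and assume for contradiction that $T \not\simeq K_{1,k}$. The first step is to pin down $n$ and $\ell$. Since $\iota_k(T)$ is a positive integer, $2k+1$ divides $n+\ell$; by Remark~\ref{b1}, $n+\ell \ge 2k+1$; and by the ``moreover'' part of Corollary~\ref{2k+1}, $T \not\simeq K_{1,k}$ forces $n+\ell \neq 2k+1$. Hence $n+\ell \ge 2(2k+1) = 4k+2$ and $\iota_k(T) \ge 2$. Combining $\ell \le n-1 \le 2k+2$ yields $n+\ell \le 4k+5 < 3(2k+1)$ for $k \ge 2$, so in fact $\iota_k(T) = 2$ and $n+\ell = 4k+2$. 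Moreover, the contrapositive of the implication ``$n \le 2k+1 \Rightarrow \iota_k(T) = 1$'' in Corollary~\ref{2k+1} gives $n \ge 2k+2$. Thus $n \in \{2k+2, 2k+3\}$, and the number of non-leaf vertices $n-\ell$ equals $2$ or $4$, respectively.

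The remaining work is to derive a contradiction by exhibiting a single $k$-isolating vertex in each case, which is feasible because the non-leaf vertices of a tree induce a subtree, so $n-\ell \in \{2,4\}$ leaves only a few possible shapes. If $n-\ell = 2$, then $T$ is a double star; taking one of its two central vertices $u$, the graph $T - N[u]$ consists only of isolated leaves, so $\{u\}$ is $k$-isolating and $\iota_k(T)=1$. If $n-\ell = 4$, the four internal vertices induce either $K_{1,3}$ or $P_4$. In the $K_{1,3}$ case, the central internal vertex $c$ satisfies that $T - N[c]$ is a set of isolated leaves, so $\iota_k(T)=1$. In the $P_4$ case, writing the internal path as $v_1v_2v_3v_4$ and letting $a_i$ be the number of leaves adjacent to $v_i$, the endpoint counts satisfy $a_1 + a_4 \le \ell = 2k-1 < 2k$, so at least one of them is at most $k-1$; then $\{v_2\}$ (if $a_4 \le k-1$) or $\{v_3\}$ (if $a_1 \le k-1$) is $k$-isolating, since after deleting the closed neighborhood the only surviving star has fewer than $k$ edges. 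In every subcase $\iota_k(T) = 1$, contradicting $\iota_k(T) \ge 2$, whence $T \simeq K_{1,k}$.

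I expect the main obstacle to be the bookkeeping in the $P_4$ subcase: one must verify that deleting $N[v_2]$ (resp.\ $N[v_3]$) leaves only isolated vertices together with a single star on at most $a_4$ (resp.\ $a_1$) edges, and that the leaf constraint $a_1 + a_4 \le 2k-1$ really forces the good choice of isolating vertex. The other cases are immediate once one records the structural fact that the non-leaf vertices span a subtree, which cuts the possibilities down to double stars, $K_{1,3}$, and $P_4$.
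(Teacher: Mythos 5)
Your proof is correct, and it takes a genuinely different route from the paper's. The paper handles the converse by a direct structural argument: assuming $\iota_k(T)\ge 2$, it picks a vertex $v$ of maximum degree, locates a $k$-star $S$ in $T-N[v]$, and uses the edge count $m=n-1\le 2k+2$ to constrain how $N[v]$, $S$ and the connecting vertices can fit together, ruling out every configuration (down to an explicit $P_7$ case when $k=2$) and concluding $\iota_k(T)=1$, whence $n+\ell=2k+1$ and $T\simeq K_{1,k}$ by Corollary~\ref{2k+1}. You instead exploit the integrality of $\iota_k(T)=\frac{n+\ell}{2k+1}$: divisibility by $2k+1$ together with $2k+1\le n+\ell\le (2k+3)+(2k+2)=4k+5<3(2k+1)$ forces $\iota_k(T)=2$ and $n+\ell=4k+2$ exactly, hence $n\in\{2k+2,2k+3\}$ and $n-\ell\in\{2,4\}$; since the internal vertices of a tree span a subtree, $T$ is a double star, a subdivision-type tree over $K_{1,3}$, or a caterpillar over $P_4$, and in each case you exhibit a single isolating vertex (using $a_1+a_4\le 2k-1$ in the $P_4$ case to guarantee one end has at most $k-1$ pendant leaves). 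This is shorter and more transparent than the paper's case analysis; what the paper's argument buys is that it never relies on the bound being an exact integer multiple, proving directly that $n\le 2k+3$ forces $\iota_k(T)=1$, which is slightly stronger information. All the steps you flag as delicate check out: the internal vertices do induce a subtree, $\ell\le n-1$ holds for $n\ge 3$, and deleting $N[v_2]$ (resp.\ $N[v_3]$) in the $P_4$ case leaves isolated vertices plus one star on $a_4$ (resp.\ $a_1$) edges.
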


\begin{proof} If $T \simeq K_{1,k}$, then $\iota_k(T) = 1 = \frac{n+\ell}{2k+1}$ and $n = k+1 < 2k+3$. We now prove the converse. Thus, suppose $n \le 2k+3$ and $\iota_k(T) = \frac{n+\ell}{2k + 1}$. Let $m = |E(T)|$. Since $T$ is a tree, $m = n - 1 \leq 2k + 2$. Let $v \in V(T)$ with $\deg_T(v) = \Delta$. Let $T' = T - N[v]$ and $n' = |V(T')|$. Then, $n' = n - \Delta - 1 \leq 2k + 3 - \Delta - 1$. 

Suppose $\iota_k(T) \geq 2$. Then, $T'$ contains a $k$-star $S$. Thus, $n' \geq |V(S)| = k+1$. 

Suppose $xy \notin E(T)$ for every $x \in N(v)$ and $y \in V(S)$. Since $T$ is connected and $2k + 2 \leq \Delta + 1 + |V(S)| = |N[v]| + |V(S)| \leq n \leq 2k + 3$, it clearly follows that $\deg_T(v) = k$ and $T$ has a vertex $w \notin N[v] \cup V(S)$ such that $V(T) = N[v] \cup V(S) \cup \{w\}$ and $x'w, wy' \in E(T)$ for some $x' \in N(v)$ and $y' \in V(S)$. Since $m \leq 2k+2$, $E(T) = \{vx \colon x \in N(v)\} \cup E(S) \cup \{x'w, wy'\}$. Thus, $T - N[w]$ contains no $k$-star, but this contradicts $\iota_k(T) \geq 2$. 

Therefore, $xy \in E(T)$ for some $x \in N(v)$ and $y \in V(S)$. Let $y^*$ be the vertex of $S$ with $\deg_S(y^*) = k$. Let $T'' = T - N[x]$. Since $v, y \in N[x]$, $|E(T'')| \leq m - (\deg_T(v) + \deg_S(y)) - 1$. Suppose $\deg_T(v) \geq k+1$ or $\deg_S(y) \geq 2$. Since $\deg_T(v) = \Delta \geq k$ and $\deg_S(y) \geq 1$, $|E(T'')| \leq (2k + 2) - (k + 2) - 1 = k - 1$, so $T''$ contains no $k$-star, but this contradicts $\iota_k(T) \geq 2$. Thus, $m = 2k + 2$, $\deg_T(v) = k$, $\deg_S(y) = 1$, $|E(T'')| = (2k + 2) - (k + 1) - 1 = k$, 
and hence $T''$ is a $k$-star. Since $k \geq 2$, $y^* \neq y$. Thus, the $k$ edges of $T''$ are the $k-1$ edges of $S-y$, which are all incident to $y^*$, and $wy' \notin E(S-y)$ for some $w \in V(T')$ and $y' \in V(S-y) = N_{S-y}[y^*]$. Since $\deg_T(v) = \Delta = k$, $\deg_T(y^*) = k$. Thus, $y' \neq y^*$, and hence $y' \in N(y^*)$. Since $wy' \notin E(S-y)$ and $T$ contains no cycles, $w \notin V(S-y)$. Since $T''$ is a $k$-star, we conclude that $k = 2$. Let $x'$ be the neighbour of $v$ that is not $x$. The $m = 2k + 2 = 6$ edges of $T$ are $x'v, vx, xy, yy^*, y^*y', y'w$ (so $T \simeq P_7$). Thus, $\{y\}$ is a $k$-isolating set of $T$, but this contradicts $\iota_k(T) \geq 2$.

Therefore, $\iota_k(T) = 1$. Together with $\iota_k(T) = \frac{n + \ell}{2k + 1}$, this gives us $n + \ell = 2k+1$, so $T \simeq K_{1,k}$ by Corollary \ref{2k+1}. 
\end{proof}

The following remark can be derived from the proof of Theorem~\ref{th:iotakupperbound}.  
 
\begin{Rem}\label{rem:diam5}
If a tree $T$ attains the bound in Theorem~\ref{th:iotakupperbound}, then $T \simeq K_{1,k}$ or $\diam(T) \geq 5$. Moreover, if $(u_0,u_1,\dots ,u_d)$ is a diametral path of $T$ maximizing the degree of $u_1$, then $\deg(u_1) = k$. 
\end{Rem}

A component of a graph $G$ is said to be \emph{non-trivial} if it has more than one vertex.

\begin{cons}\label{notation:familyT_k}
For every $k\ge 2$, let $\mathcal{T}_k$ be the family of trees constructed as follows.
The set of  vertices of $T\in \mathcal{T}_k$ is $V(T)=A(T)\cup B(T)\cup C(T)\cup L(T)$, where 
$A(T)$, $B(T)$, $ C(T)$ and $L(T)$ are pairwise disjoint, and
\begin{enumerate}[$\bullet$]
    \item $L(T)$ is the set of leaves of $T$ and all their support vertices belong to $C(T)$;
    \item $A(T)$ induces a forest whose $h$ components are all non-trivial;
    \item the subgraph of $T$ induced by $A(T)\cup B(T)$ is obtained by attaching exactly one leaf to each vertex of $A(T)$;
    \item for each $u\in B(T)$, $\deg_T(u)=2$ and $u$ has exactly one neighbour in $A(T)$ and another in $C(T)$;
    \item for each $u\in C(T)$, $\deg_T(u)=k$, $u$ has at least one neighbour in $B(T)$, and each neighbour of $u$ is either in $B(T)$ or in $L(T)$; 
    \item $A(T)\cup B(T)\cup C(T)$ induces a tree.
\end{enumerate}
Figure~\ref{fig:treeTk} illustrates a graph in $\mathcal{T}_k$ with $k = 4$.
\end{cons}
\begin{figure}[h]
\centering \includegraphics[width=0.7\textwidth]{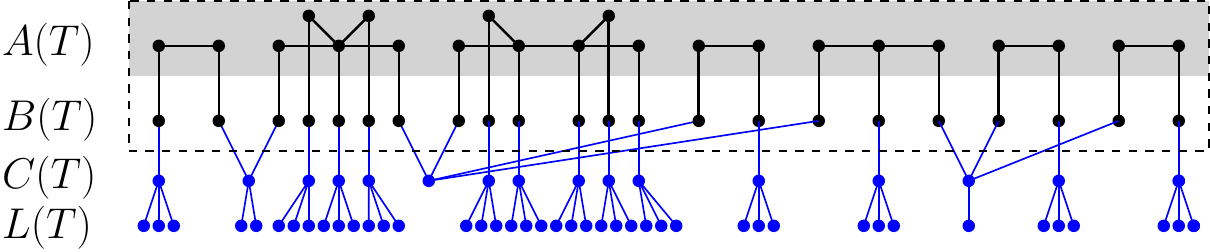}
\caption{A tree in $\mathcal{T}_4$. $A(T)$ induces a forest with non-trivial components. $A(T)\cup B(T)$ induces a corona graph. The vertices in $B(T)$ have degree 2 and all the vertices in $C(T)$ have degree 4.}
\label{fig:treeTk}
\end{figure} 
\vspace{3mm}

\begin{Lem}\label{lem:propsTk}
Let $T\in \mathcal{T}_k$, let $n_0$ be the order of the forest $A(T)$, and let $h$ be the number of components of $A(T)$. Then, $T$ has the following properties:
\begin{enumerate}[a)]
\item\label{itema} $\vert A(T)\vert=\vert B(T)\vert=n_0\ge 2h\ge 2$.
\item\label{itemb} $\vert C(T)\vert
=n_0-(h-1)$.
\item\label{itemc} $\vert L(T)\vert=
(k-1)n_0-k(h-1)$.
\item\label{itemd} $\vert V(T)\vert =
(k+2)n_0 -(k+1)(h -1)\ge 2k+4$.
\item\label{itemnou} $\vert V(T)\vert +
\vert L(T)\vert=(2k+1)|C(T)|$.
\item\label{itemnounou} Each vertex in $A(T)$ has exactly one vertex of $C(T)$ at distance $2$. Moreover, if $u,v\in A(T)$, then $d_T(u,v)=4$  if and only if the unique vertex in $C(T)$ at distance $2$ from $u$ is the unique vertex in $C(T)$ at distance $2$ from $v$. 
\item\label{itemh} $d(u,v)\ge 5$ for every $u,v\in C(T)$ with $u \neq v$.
\end{enumerate}
\end{Lem}
\begin{proof}
 Items~\ref{itema}),
 \ref{itemnounou}) 
 and \ref{itemh}) 
are direct consequences of the definition of $\mathcal{T}_k$.
Item \ref{itemb}) is derived from the known relation between order and size of a forest. 
Indeed, on the one hand, if $F_1$ is the forest induced by $A(T)\cup B(T)$, then 

$$\vert E(F_1)\vert =|V(F_1)|-h=\vert A(T)\cup B(T)\vert -h=2n_0-h.$$ 
On the other hand, if $T_0$ is the tree induced by
$A(T)\cup B(T)\cup C(T)$, then

$$\vert E(T_0)\vert =|V(T_0)|-1=\vert A(T)\cup B(T)\cup C(T)\vert -1=
2n_0+\vert C(T)\vert -1.$$ Besides, since there are exactly $n_0$ edges with one vertex in $B(T)$ and the other in $C(T)$, we have 
$\vert E(T_0)\vert =\vert E(F_1)\vert + n_0.$ Hence,

\begin{align*}
|C(T)|&=|E(T_0)|-2 n_0 +1
=(\vert E(F_1)\vert + n_0)-2 n_0 +1\\
&=\vert E(F_1)\vert -n_0 +1
=(2n_0-h)-n_0+1=n_0-(h-1).    
\end{align*}

Now let us prove item~\ref{itemc}). By construction and by item~\ref{itemb}), 

$$|L(T)|=k \, |C(T)|-n_0=k\,  (n_0-h+1)-n_0=(k-1)n_0-k(h-1).$$
By items \ref{itema}), \ref{itemb}) and \ref{itemc}),

\begin{align*}
|V(T)|&=|A(T)\cup B(T)\cup C(T)\cup L(T)|\\&=2n_0+(n_0-(h-1))+((k-1)n_0-k(h-1))\\&=  
(k+1) (n_0 -h +1)+n_0
\ge
(k+1) (h+1)+2 \\
&\ge
(k+1) 2+2 \ge 2k+4.
\end{align*}
Hence, item~\ref{itemd}) holds.
Item~\ref{itemnou}), is a direct consequence of  items~\ref{itemb}), \ref{itemc}) and \ref{itemd}).
\end{proof}

\begin{Prop}\label{prop:extremal1}
If $T\in \mathcal{T}_k$, $k\ge 2$, and $T$ is of order $n$ and leaf order $\ell$, then $n\ge 2k+4$ and

$$\iota_k(T)=\frac{n+\ell}{2\, k+1}.$$    
\end{Prop}
\begin{proof}
On one hand, $n\ge 2k+4$ by Lemma~\ref{lem:propsTk}. On the other hand, since the $k$-stars with centers in $C(T)$ are disjoint and the distance between any two such centers is at least $5$ (by item g) of Lemma~\ref{lem:propsTk}), every $k$-isolating set of $T$ must contain at least $\vert C(T)\vert$ vertices. 
Therefore, $\iota_k(T)\ge |C(T)|$. By Lemma~\ref{lem:propsTk}, $|C(T)|=\frac {n+\ell}{2k+1}$.
Hence, by Theorem~\ref{th:iotakupperbound}, $\iota_k(T)=\frac {n+\ell}{2k+1}$
\end{proof}

Next, we describe how to construct a minimum $k$-isolating set for trees of the family $\mathcal{T}_k$ (see an example in Figure~\ref{fig:iotakset}).

\noindent
\begin{cons}\label{construction}
Let $T\in \mathcal{T}_k$.
Consider the partition 
 $\{A_1,\dots ,A_h\}$ of $A(T)$ such that  $T[A_1], T[A_2]$,$ \dots$, $T[A_h]$ are the components of the forest induced by $A(T)$, and $V(T[A_i]) = A_i$ for each $i \in [h]$.
By definition of $\mathcal{T}_k$, for each $i\in [h]$, there exist vertices $v_i\in A_i$ and $v\in A(T)\setminus A_i$ such that $d(v_i,v)=4$. 
By Lemma~\ref{lem:propsTk} f), the unique vertex in $C(T)$ at distance $2$ from $v_i$ is the unique vertex in $C(T)$ at distance $2$ from $v$.
\vspace{2mm}

We recursively construct a  set $S=\{u_{i_2},\dots, u_{i_h}\}\subseteq V(T)$, such that 
 $u_{i_j}\in A_{i_j}$ for $2\le j\le h$, and
$\{i_2,\dots ,i_h\}=\{2,\dots ,h\}$ in  the following way.

At the first step, we begin by choosing 
a vertex $u_{i_2}\in A_{i_2}$, $A_{i_2}\not= A_1$, such that $u_{i_2}$ is at distance 4 from a vertex $u_1\in A_1$.
This is always possible since $T$ is connected. 

At step $r$, $2\le r<h-1$, suppose we have already chosen vertices $\{u_{i_2},\dots ,u_{i_r}\}$
such that
$u_{i_j}\in A_{i_j}$, for $2\le j\le r$, where 
$A_1,A_{i_2},\dots,A_{i_r}$ are different sets, and $u_{i_j}$ has a vertex at distance 4 in $(A_1\cup A_{i_2}\cup \dots \cup A_{i_{j-1}})\setminus \{u_{i_2},\dots ,u_{i_{j-1}}\}$.
Then, choose a vertex $u_{i_{r+1}}\in A_{i_{r+1}}$ such that
the sets 
$A_{1}, A_{i_2}, \dots ,A_{i_r}, A_{i_{r+1}}$ are  different and $u_{i_{r+1}}$ has a vertex at distance 4 in $(A_{1}\cup A_{i_2}\cup \dots \cup A_{i_r})\setminus \{u_{i_2},\dots, u_{i_r}\}$. Such a  vertex $u_{i_{r+1}}$ exists. Indeed,  since $T$ is connected, there must be a vertex at distance 4 from some vertex in $A_{1}\cup A_{i_2}\cup \dots \cup A_{i_r}$. Moreover, if $u_{i_{r+1}}$ is at distance 4 from all the vertices $u_{i_2},\dots, u_{i_r}$, then, by Lemma~\ref{lem:propsTk}~\ref{itemnounou}), it must be also at distance 4 from $u_1\in A_1$.

Let $D=A(T)\setminus \{u_{i_2},\dots ,u_{i_h}\}$.
\end{cons}
\begin{Rem}\label{rem:setD}
The set $D$ is obtained by  removing exactly one vertex from all but one of the $h$ components of the forest induced by $A(T)$ in such a way that every vertex in $C(T)$ has at least one vertex of $D$ at distance $2$.  
\end{Rem}
\begin{figure}[h]
\centering \includegraphics[width=0.9\textwidth]{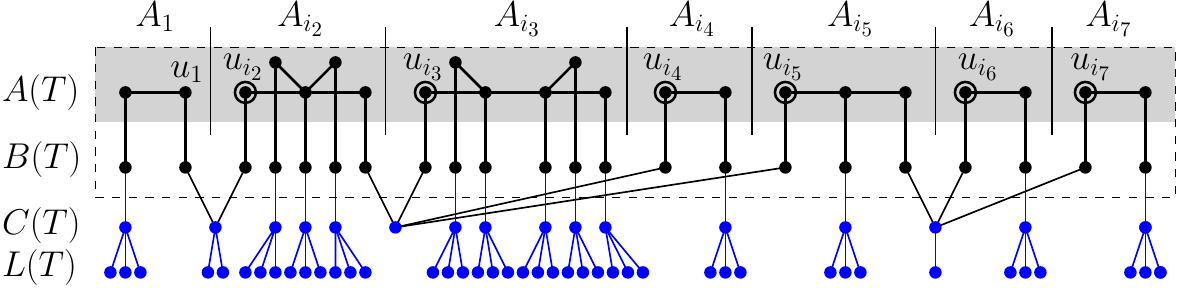}
\caption{A tree $T$ of $\mathcal{T}_4$. The set $D$ of non-circled vertices of $A(T)$ is $4$-isolating.}
\label{fig:iotakset}
\end{figure} 

\begin{Prop}\label{prop:isolatingset} 
If $T\in \mathcal{T}_k$ with $k\ge 2$, then any set $D$ obtained using Construction~\ref{construction} provides a minimum $k$-isolating set of $T$.
\end{Prop}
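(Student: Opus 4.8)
The plan is to show that the set $D$ produced by the iterative procedure is both a valid $k$-isolating set and has size exactly $|C(T)|=\frac{n+\ell}{2k+1}$, which by Proposition~\ref{prop:extremal1} forces it to be minimum. Since Proposition~\ref{prop:extremal1} already establishes $\iota_k(T)=|C(T)|$, the entire task reduces to verifying these two facts about the output of the construction.

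\medskip

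First I would establish the cardinality claim. The remark following the procedure already observes that $D$ contains all of $A_1$ and, at each upgrade step, $Y\setminus X$ adds all but one vertex of each component $A_i$ that is ``reached'' (namely the single vertex of $A_i$ landing in $X$ is the one omitted). The key combinatorial point to nail down is that the procedure terminates with $W=A(T)$ and that every component $A_i$ with $i\neq 1$ is reached exactly once, contributing exactly $|A_i|-1$ vertices to $D$, while $A_1$ contributes all $|A_1|$ vertices. Summing gives $|D|=|A(T)|-(h-1)$, which by Lemma~\ref{lem:propsTk}\ref{itemb}) equals $|C(T)|$. Termination and the ``reached exactly once'' property follow because $T[A(T)\cup B(T)\cup C(T)]$ is a tree (by the definition of $\mathcal{T}_k$) and distinct components $A_i,A_j$ are joined only through paths of the form $A_i\!-\!B\!-\!C\!-\!B\!-\!A_j$, so the distance between them is exactly $4$; thus the ``distance $4$'' rule in the definition of $X$ propagates the set $W$ across the tree structure of the contracted components precisely once per component.

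\medskip

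Second I would verify that $D$ is $k$-isolating, i.e.\ that $T-N[D]$ contains no copy of $K_{1,k}$. By Lemma~\ref{lem:propsTk}\ref{iteme}) and \ref{itemf}), every $k$-star in $T$ is centered either at a vertex of $A(T)$ or at a vertex of $C(T)$ (with the mild exception for $k=2$ that also allows centers in $B(T)\cup C(T)$, handled the same way). For centers in $A(T)$: since $D$ omits at most one vertex per component $A_i$, and each such omitted vertex $u$ has $d_T(u,W)=4$, I would argue that the omitted vertex is itself dominated, i.e.\ lies in $N[D]$, because it has a neighbour in $A_i\cap D$ (as $A_i$ is a nontrivial component and only one of its vertices is omitted). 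Hence no $A$-centered star survives. For centers in $C(T)$: each $c\in C(T)$ has its attached leaves and is at distance $2$ from $A(T)$ through its $B$-neighbour; I would show $c\in N[D]$ since its $B$-neighbour is adjacent to a vertex of $A(T)$ that lies in $D$ (again using that each component contributes all but one vertex). Because the stars centered at $C(T)$ are the obstructions that Proposition~\ref{prop:extremal1} shows must all be destroyed, confirming $C(T)\subseteq N[D]$ is the heart of the isolating verification.

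\medskip

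The main obstacle I anticipate is the bookkeeping in the second paragraph: proving rigorously that the ``distance $4$'' expansion rule visits each component of $A(T)$ exactly once and never double-counts, so that $|D|=|A(T)|-(h-1)$ exactly rather than merely at most. This requires a clean inductive invariant on the set $W$—for instance, that after each upgrade $W$ is a union of entire components $A_i$, that the components in $W$ together with the $B,C$ vertices between them induce a connected subtree, and that $D\cap W$ contains all of $A_1$ and all but one vertex of every other component in $W$. Once this invariant is set up, both the cardinality count and the domination argument (every omitted $A$-vertex and every $c\in C(T)$ lands in $N[D]$) follow without serious further calculation.
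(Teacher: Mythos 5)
Your overall strategy coincides with the paper's: show that the procedure outputs a $k$-isolating set of size $|A(T)|-(h-1)=|C(T)|$ and invoke Proposition~\ref{prop:extremal1} to conclude minimality. The paper disposes of the termination/cardinality bookkeeping in the remark preceding the proposition (at each step $X$ meets each component $A_i$ in at most one vertex because $T[A(T)\cup B(T)\cup C(T)]$ is a tree) and devotes the proof itself to the case analysis on star centres via Lemma~\ref{lem:propsTk}~\ref{iteme}), \ref{itemf}); your extra care with the expansion invariant is a more explicit version of the same route, not a different one. Your treatment of $A$-centred stars (every vertex of $A(T)$ lies in $N[D]$ because each component omits at most one vertex and is nontrivial) is fine.

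One step needs repair. You assert $c\in N[D]$ for $c\in C(T)$ and that ``confirming $C(T)\subseteq N[D]$'' is the heart of the verification. This is literally false: $D\subseteq A(T)$ while $N(c)\subseteq B(T)\cup L(T)$, so no vertex of $C(T)$ is ever in $N[D]$. What is actually needed, and what the paper states, is that the star $N[c]$ contains a \emph{leaf} in $N[D]$: some $B$-neighbour $b$ of $c$ has its $A$-neighbour in $D$, whence $b\in N[c]\cap N[D]$ and the star is destroyed. Note also that ``some'' cannot be upgraded to ``any'': the $A$-neighbour of a given $B$-neighbour of $c$ may be exactly the omitted entry vertex of its component, so your parenthetical justification ``(again using that each component contributes all but one vertex)'' does not suffice as written. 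The correct choice is the $B$-neighbour of $c$ on the $A_1$-side of $c$ in the tree $T[A(T)\cup B(T)\cup C(T)]$: since each vertex of $A(T)$ has a unique $B$-neighbour, the path from $A_1$ to $c$ cannot enter and leave a component of $A(T)$ at the same vertex, so the exit vertex of that component towards $c$ differs from the omitted entry vertex and therefore lies in $D$. With that correction the proof closes and matches the paper's.
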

\begin{proof}
Let $D$ be a set obtained using Construction~\ref{construction}. 
Notice that $|D|=n_0-(h-1)$ and, by Lemma~\ref{lem:propsTk} and Proposition~\ref{prop:extremal1}, $n_0-(h-1)=|C(T)|=\frac{n+\ell}{2k+1}=\iota_k(T)$. Hence, it only remains to prove that $D$ 
is $k$-isolating.
To this purpose, we check that $T-N[D]$ has maximum degree at most $k-1$.
Notice that $N[D]\subseteq A(T)\cup B(T)$ and $A(T)\subseteq N[D]$.
Hence,  $T-N[D]=T[W]$, where $W=B'(T)\cup C(T)\cup L(T)$ for some subset $B'(T)\subseteq B(T)$. The vertices of  $B'(T)\cup L(T)$ are leaves in $T[W]$. Now, let $w\in C(T)$. By Remark~\ref{rem:setD}, every vertex  $w\in C(T)$ is at distance 2 from some vertex of $D$. 
Therefore, $w$ has degree at most $k-1$ in $T-N[D]$. Hence, $D$ is $k$-isolating. 
\end{proof}

\begin{Thm}
\label{thm:extremal2}
If $T$ is a tree of order $n$ and leaf order $\ell$, $k\ge 2$, $n\ge 2k+4$ and

$$\iota_k(T)=\frac{n+\ell}{2\, k+1},$$
then $T\in \mathcal{T}_k$.
\end{Thm}
\begin{proof} 
The proof is by induction on $n$. Let $T$ be a tree attaining the upper bound in Theorem~\ref{th:iotakupperbound}.

Suppose $n=2k+4$. Then, $ (2k+1)\iota_k(T)=n+\ell\ge 2k+6$, so $\iota_k(T)\ge 2$, and hence $T$ is not a star. 
Moreover, if  $\iota_k(T)\ge 3$, then 
$\iota_k (T)(2k+1)\ge 3(2k+1)=6k+3$, but
$\iota_k (T)(2k+1)=n+\ell \le (2k+4)+(2k+2) =4k+6$, a contradiction. Therefore, $\iota_k(T)= 2$ and $n+\ell=4k+2$, implying that $\ell=4k+2-n=2k-2$.
If $k=2$, then $\ell=2$. Hence, $T$ is a path of order $8$, that belongs to $\mathcal{T}_2$, for $h=1$ and $|A(T)|=|B(T)|=2$.
Suppose now $k\ge 3$.
If there are at least $3$ vertices of degree at least $k$, then $\ell\ge 2+3(k-2)=3k-4$, a contradiction since $\ell=2k-2$ and $k\ge 3$. Thus, there are exactly $2$ vertices, say $u$ and $v$, of degree at least $k$. Hence, $d(u,v)\ge 3$, because otherwise a common neighbour of $u$ and $v$ would be a $k$-isolating set, and this contradicts $\iota_k(T)=2$. If the degree of $u$ or $v$ is greater than $k$, then $\ell\ge 2+(k-2)+(k-1)=2k-1$, which contradicts $\ell=2k-2$.
Thus, $\deg(u)=\deg(v)=k$. 
If $d(u,v)\le 4$, then the set formed by a vertex at distance at most $2$ from $u$ and from $v$ would be a $k$-isolating set, implying $\iota_k(T)=1$, a contradiction. If $d(u,v)\ge 6$, then $n\ge 7+\ell\ge 7+2k-2=2k+5>2k+4$, a contradiction. Thus, $d(u,v)=5$ and $\deg_T(u)=\deg_T(v)=k$. Therefore, $T$ consists of the vertices $u$ and $v$, the $4$ vertices of the path from $u$ to $v$, exactly $k-1$ leaves attached to $u$, and exactly $k-1$ leaves attached to $v$, because otherwise $n>6+2(k-1)=2k+4$, a contradiction. Notice that the described tree belongs to $\mathcal{T}_k$, for $h=1$ and $|A(T)|=|C(T)|=2$.
\vspace{1mm}

Now suppose $n > 2k+4$. Let $d=\diam (T)$. By the proof of Theorem~\ref{th:iotakupperbound}, $T \simeq K_{1,k}$ or $d\ge 5$. 
Since $n > 2k+4$, $d\ge 5$.

As in the proof of Theorem~\ref{th:iotakupperbound}, 
among all diametral paths of $T$ choose one path $P=(u_0,u_1,\dots ,u_d)$ maximizing the degree of $u_1$. As argued in Remark~\ref{rem:diam5}, $\deg_T(u_1)=k$.

\begin{claim}
$\deg_T(u_2)=2$.    
\end{claim}
\begin{proof}
Suppose to the contrary $\deg_T(u_2)\ge 3$.
We distinguish two cases.

If $\deg_T(u_3)\ge 3$, then let 
 $T'$ be the component containing $u_3$ after the removal of the edge $u_2u_3$ from $T$. Thus, $T'$ is a tree of order $n'$ and $\ell'$ leaves, with $n'\le n-(k+2)$ and $\ell'\le \ell-k$. If $D'$ is a minimum $k$-isolating set of $T'$, 
 then $D'\cup \{ u_2 \}$ is a $k$-isolating set of $T$.
 Hence, by Theorem~\ref{th:iotakupperbound},

{\small $$\iota_k(T)\le 1+\iota_k(T')\le 1+\frac{n'+\ell'}{2k+1}\le 
 1+\frac{n-(k+2)+\ell-k}{2k+1}=  \frac{n+\ell-1}{2k+1} 
 <\frac{n+\ell}{2k+1} $$}
 which is a contradiction.

If $\deg_T(u_3)= 2$, then let 
 $T'$ be the component of $T-u_3u_4$ containing $u_4$. Thus, $T'$ is a tree of order $n'$ and $\ell'$ leaves, 
 with $n'\le n-(k+3)$ and $\ell'\le \ell-(k-1)$. If $D'$ is a minimum $k$-isolating set of $T'$, 
 then $D'\cup \{ u_2 \}$ is a $k$-isolating set of $T$.
 Hence, by Theorem~\ref{th:iotakupperbound},

 {\small\begin{align*}
  \iota_k(T)&\le 1+\iota_k(T')\le 1+\frac{n'+\ell'}{2k+1}\le 
 1+\frac{n-(k+3)+\ell-(k-1)}{2k+1}=  \frac{n+\ell-1}{2k+1} \\
 &<\frac{n+\ell}{2k+1}    
 \end{align*}}
 which is again a contradiction.
\end{proof}

\begin{claim}
$\deg_T(u_3)=2$.   
\end{claim}
\begin{proof}
Suppose to the contrary $\deg_T(u_3)\ge 3$.
Let 
 $T'$ be the component containing $u_4$ in $T-u_3u_4$. Thus, $T'$ is a tree of order $n'$ and $\ell'$ leaves, 
 with $n'\le n-(k+3)$ and $\ell'\le \ell-(k-1)$. If $D'$ is a minimum $k$-isolating set of $T'$, 
 then $D'\cup \{ u_3 \}$ is a $k$-isolating set of $T$.
 Hence, by Theorem~\ref{th:iotakupperbound},

{\small \begin{align*}
  \iota_k(T)&\le 1+\iota_k(T')\le 1+\frac{n'+\ell'}{2k+1}\le 
 1+\frac{n-(k+3)+\ell-(k-1)}{2k+1}=  \frac{n+\ell-1}{2k+1} \\
 &<\frac{n+\ell}{2k+1}    
 \end{align*}}
 which is a contradiction.
\end{proof}

\begin{claim} 
$\deg_T(u_5)\ge 2$.   
\end{claim}
\begin{proof}
Suppose to the contrary  $\deg_T(u_5)=1$. Then, $\{u_3\}$ is a $k$-isolating set of $T$.
Thus, if $\deg_T(u_4)=r$, we have

$$\frac{n+\ell}{2k+1}=\frac{(r+k+2)+(r+k-2)}{2k+1}=\frac{2k+2r}{2k+1}>1=\iota_k(T),$$
which contradicts that $T$ attains the upper bound.
\end{proof}

Therefore, $\deg_T(u_2)=\deg_T(u_3)=2$ and $\deg_T(u_5)\ge 2$. Now we distinguish cases according to the degrees of $u_4$ and $u_5$. 

\begin{enumerate}[]

    \item {\bf Case 1:} $\deg_T(u_4)=2$ and $\deg_T(u_5)\ge 3$. 
    Let 
 $T'$ be the component containing $u_5$ of $T-u_4u_5$. Thus, $T'$ is a tree of order $n'$ and $\ell'$ leaves, 
 with $n'\le n-(k+3)$ and $\ell'\le \ell-(k-1)$. If $D'$ is a minimum $k$-isolating set of $T'$, 
 then $D'\cup \{ u_3 \}$ is a $k$-isolating set of $T$.
 Hence, by Theorem~\ref{th:iotakupperbound},
{\small \begin{align*}
  \iota_k(T)&\le 1+\iota_k(T')\le 1+\frac{n'+\ell'}{2k+1}\le 
 1+\frac{n-(k+3)+\ell-(k-1)}{2k+1}=  \frac{n+\ell-1}{2k+1} \\
 &<\frac{n+\ell}{2k+1}    
 \end{align*}}
 which is a contradiction.

\item 
{\bf Case 2:} $\deg_T(u_4)=\deg_T(u_5)=2$.
In such a case, $\deg_T(u_6)\ge 2$, since otherwise $n=k+5<2k+4$, a contradiction.

\begin{enumerate}
    \item {Case 2.1:  $\deg_T(u_6)=2$.} If $k\ge 3$, let $T'$ be the component of $T-u_5u_6$ containing $u_6$. Then, $T'$ is a tree of order $n'$ with $\ell'$ leaves, where $n'=n-(k+4)$ and $\ell'=\ell -(k-2)$. If $D'$ is a minimum $k$-isolating set of $T'$ and $k\ge 3$, then $D'\cup \{ u_3 \}$ is a $k$-isolating set of $T$. Hence,

     \begin{align*}
  \iota_k(T)&\le 1+\iota_k(T')\le 1+\frac{n'+\ell'}{2k+1}= 
 1+\frac{n-(k+4)+\ell-(k-2)}{2k+1}\\&=  \frac{n+\ell-1}{2k+1}
 <\frac{n+\ell}{2k+1}    
 \end{align*}
 which is a contradiction.

If $k=2$, then 
 let $T'$ be the component of $T-u_4u_5$ containing $u_5$. If $D'$ is a minimum $K_{1,2}$-isolating set of $T'$, then $D'\cup\{u_3\}$ is a $K_{1,2}$-isolating set of $T$. Hence,

{\small\begin{align*}
 \frac{n+\ell}{5} &= \iota_2(T)\le 1+\iota_2(T')\le 1+\frac{n'+\ell'}{5}= 
 1+\frac{n-5+\ell}{5}=\frac{n+\ell}{5}  
 \end{align*}}
implying that $T'$ attains the upper bound. Thus, $T'$ is either the star $K_{1,2}$ or belongs to $\mathcal{T}_2$, by the induction hypothesis.
If $T'$ is a star $K_{1,2}$, then $u_5$ must be a leaf of $K_{1,2}$, since $\deg_T(u_5)=2$. In such a case, $T$ is a path of order 8, that belongs to $\mathcal{T}_2$.
If $T'\in \mathcal{T}_2$, then $u_5$ must be a leaf of $T'$. Hence, $T\in \mathcal{T}_2$, by taking $A(T)=A(T')\cup \{u_3,u_4\}$, $B(T)=B(T')\cup \{u_2,u_5\}$ and $C(T)=C(T')\cup \{u_1\}$.

    \item {Case 2.2: $\deg_T(u_6)\ge 3$.} Let $T'$ be the component containing $u_5$ after the removal of the edge $u_4u_5$ from $T$. Then, $T'$ is a tree of order $n'$ with $\ell'$ leaves, where $n'=n-(k+3)$ and $\ell'=\ell-(k-2)$. If $D'$ is a minimum $k$-isolating set of $T'$, then $D'\cup \{ u_3 \}$ is a $k$-isolating set of $T$. Hence,

    \begin{align*}
  \frac{n+\ell}{2k+1}&=\iota_k(T)\le 1+\iota_k(T')\le 1+\frac{n'+\ell'}{2k+1}\\&= 
 1+\frac{n-(k+3)+\ell-(k-2)}{2k+1}=  \frac{n+\ell}{2k+1}.  
 \end{align*}
  Hence, all the inequalities in the preceding expression are equalities. Therefore, $T'$ attains the upper bound, implying that $T'$ is a $k$-star  or $T'\in \mathcal{T}_k$, 
  if $n'\ge 2k+4$, by the induction hypothesis.
  In both cases, $u_5$ is a leaf of $T'$.
  If $T'$ is a $k$-star, then
  $T\in \mathcal{T}_k$, 
  by taking $A(T)=\{u_3,u_4\}$, $B(T)=\{u_2,u_5\}$ and $C(T)=\{u_1,u_6\}$.
  If $T'\in \mathcal{T}_k$, then $u_6\in C(T')$,  since $u_5$ is a leaf in $T'$. Hence, we can check that $T\in \mathcal{T}_k$ by considering $B(T)=B(T')\cup \{x,t\}$; $A(T)=A(T')\cup \{z,w\}$; $C(T)=C(T')\cup \{y\}$. Therefore, $T\in \mathcal{T}_k$.
\end{enumerate}

\item 
{\bf Case 3:} $\deg_T(u_4)\ge 3$.
Let $T'$ be the component containing $u_4$ after the removal of the edge $u_3u_4$ from $T$. Then, $T'$ is a tree of order $n'$ with $\ell'$ leaves, where $n'=n-(k+2)$ and $\ell'=\ell-(k-1)$. If $D'$ is a minimum $k$-isolating set of $T'$, then $D'\cup \{ u_3 \}$ is a $k$-isolating set of $T$. Hence,

    \begin{align*}
  \frac{n+\ell}{2k+1}&=\iota_k(T)\le 1+\iota_k(T')\le 1+\frac{n'+\ell'}{2k+1}\\&= 
 1+\frac{n-(k+2)+\ell-(k-1)}{2k+1}=  \frac{n+\ell}{2k+1}.  
 \end{align*}

Thus, all the inequalities of the preceding expression must be equalities, implying that $T'$ achieves the upper bound. Then either $T'$ is a $k$-star or, by the induction hypothesis, $T'\in \mathcal{T}_k$.

If $T'$ is a $k$-star,
then $n=4+(k+1)<2k+4$, a contradiction. 
Therefore, $T'\in \mathcal{T}_k$. Then, $u_4\in A(T')\cup B(T')\cup C(T')$, since $\deg_{T'}(u_4)\ge 2$.

If $u_4\in A(T')$, then it is easy to check that $T\in \mathcal{T}_k$, for $A(T)=A(T')\cup \{u_3\}$, $B(T)=B(T')\cup \{u_2\}$ and $C(T)=C(T')\cup \{u_1\}$.

Finally, we prove that it is not possible to have $u_4\in  B(T')\cup C(T')$ by constructing a $k$-isolating set of size less than $\frac{n+\ell}{2k+1}$, which contradicts that $T$ attains the upper bound. With this aim, let $A_1(T')$ be the vertices of a component of the forest induced by $A(T')$ containing a vertex $a_0$ at distance at most two from $u_4$ (notice that this component is not necessarily unique if $u_4\in B(T')$).
Let $D'$ be the minimum $k$-isolating set considered in Proposition~\ref{prop:isolatingset} and let $D=(D'\setminus \{a_0\})\cup\{u_3\}$. 
By construction,  $D$ is a $k$-isolating set of $T$. 
Indeed, all stars $K_{1,k}$ of $T'$ have a vertex in $N[D']$. 
Since $u_3$ is a neighbour of $u_4$ and $u_2$, we derive that $D$ is also a $k$-isolating set of $T$.
Hence,

{\small $$\iota_k(T)\le |D|=|D'|=\frac{n'+\ell'}{2k+1}=\frac{n-(k+2)+\ell-k}{2k+1}=\frac{n+\ell-2k-2}{2k+1}<\frac{n+\ell}{2k+1}$$}

contradicting that $T$ achieves the upper bound.
\end{enumerate}

This finishes the proof, since we have showed $T\in \mathcal{T}_k$ when $T$ achieves the upper bound.

\end{proof}

The former results allow us to characterize all trees attaining the upper bound proved in Theorem~\ref{th:iotakupperbound}.

\begin{Thm}\label{th:charactiotak}
Let $k\ge 2$. If $T$ is a tree of order $n$ with $\ell$ leaves, then
$\displaystyle\iota_k(T)= \frac{n+\ell}{2k+1}$
 if and only if $T$ is the $k$-star or $T\in \mathcal{T}_k$.
\end{Thm}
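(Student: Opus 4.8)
The plan is to prove Theorem~\ref{th:charactiotak} by combining the extremal results already established for the two possible value ranges of the order. The statement characterizes all trees $T$ of order $n$ with $\ell$ leaves satisfying $\iota_k(T) = \frac{n+\ell}{2k+1}$, and I would argue that these are exactly $K_{1,k}$ together with the family $\mathcal{T}_k$. Since all the hard combinatorial work has been done in the preceding propositions and theorems, this final result is essentially a bookkeeping argument that assembles those pieces according to whether $n$ is small or large.

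First I would prove the forward direction. Suppose $\iota_k(T) = \frac{n+\ell}{2k+1}$. By Remark~\ref{b1}, every tree with $\Delta \geq k$ satisfies $n + \ell \geq 2k+1$, and trees with $\Delta < k$ have $\iota_k(T) = 0$, which forces $n + \ell = 0$, impossible; so we may assume $\Delta \geq k$ and $n_{\ge k} \geq 1$. I would split on the order. If $n \le 2k+3$, then Corollary~\ref{th:sharpsmallorder} directly yields $T \simeq K_{1,k}$. If instead $n \ge 2k+4$, then Theorem~\ref{thm:extremal2} yields $T \in \mathcal{T}_k$. (One should note the two ranges $n \le 2k+3$ and $n \ge 2k+4$ cover all possibilities and overlap correctly with the hypotheses of the two cited results.)

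For the converse I would simply invoke the two existence/value results. If $T \simeq K_{1,k}$, then $n = k+1$ and $\ell = k$, so $\iota_k(T) = 1 = \frac{(k+1)+k}{2k+1} = \frac{n+\ell}{2k+1}$. If $T \in \mathcal{T}_k$, then Proposition~\ref{prop:extremal1} gives $\iota_k(T) = \frac{n+\ell}{2k+1}$ directly. This completes both directions.

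The only genuine subtlety — and the one place I would be careful — is confirming that $K_{1,k}$ and $\mathcal{T}_k$ are disjoint classes and that the boundary $n = 2k+4$ is handled consistently: Proposition~\ref{prop:extremal1} states $n \ge 2k+4$ for members of $\mathcal{T}_k$, while Corollary~\ref{th:sharpsmallorder} forces $K_{1,k}$ whenever $n \le 2k+3$, so no tree can satisfy both descriptions and no extremal tree escapes classification. There is no real obstacle here; the theorem is a corollary of the structural theorems, and I expect the proof to be just a few lines. I would write it as below.

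\begin{proof}
We may assume $\Delta(T) \ge k$, since otherwise $n_{\ge k} = 0$ gives $\iota_k(T) = 0$, contradicting $\iota_k(T) = \frac{n+\ell}{2k+1} > 0$.

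Suppose first that $\iota_k(T) = \frac{n+\ell}{2k+1}$. If $n \le 2k+3$, then $T \simeq K_{1,k}$ by Corollary~\ref{th:sharpsmallorder}. If $n \ge 2k+4$, then $T \in \mathcal{T}_k$ by Theorem~\ref{thm:extremal2}.

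Conversely, if $T \simeq K_{1,k}$, then $n = k+1$ and $\ell = k$, so $\iota_k(T) = 1 = \frac{(k+1)+k}{2k+1} = \frac{n+\ell}{2k+1}$. If $T \in \mathcal{T}_k$, then $\iota_k(T) = \frac{n+\ell}{2k+1}$ by Proposition~\ref{prop:extremal1}.
\end{proof}
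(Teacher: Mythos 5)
Your proposal is correct and is essentially the paper's own argument: the theorem is stated as a direct consequence of Corollary~\ref{th:sharpsmallorder} (for $n \le 2k+3$), Theorem~\ref{thm:extremal2} (for $n \ge 2k+4$), and Proposition~\ref{prop:extremal1} together with the direct computation for $K_{1,k}$ for the converse. Your explicit handling of the range split and of the degenerate case $\Delta(T) < k$ matches what the paper leaves implicit.
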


\section*{Aknowledgements}
M. Mora is 
partially supported by grant PID2023-150725NB-I00 funded by MICIU/AEI/10.13039/501100011033
 and by grant Gen.Cat.DGR 2021-SGR-00266 from Ag\`encia de Gesti\'o d'Ajuts Universitaris i de Recerca. 
 
 M.J. Souto-Salorio 
 is partially supported 
 by grant
 LATCHING (PID2023-147129OB-C21) funded by MICIU/AEI/10.13039/501100011033 and ERDF, EU
 and by grant
 SCANNER-UDC (PID2020-113230RB-C21) funded by MICIU/AEI/10.13039/501100011033.

\bibliographystyle{abbrv} 
\bibliography{bibfile}

\end{document}